\documentclass[11pt,letterpaper,reqno]{amsart}
\usepackage{tikz}
\usetikzlibrary{positioning, shapes.geometric, arrows.meta}
\usepackage{amssymb}
\usepackage{amsmath}
\usepackage{amsthm}
\usepackage{amsfonts}
\IfFileExists{bbm.sty}{\usepackage{bbm}}{}
\providecommand{\mathbbm}[1]{\mathbb{#1}}
\usepackage{enumitem}
\usepackage{pgfplots}
\pgfplotsset{compat=1.18}
\usepackage{booktabs}

\usepackage{graphicx}
\usepackage[T1]{fontenc}
\usepackage{lmodern}
\usepackage{float}
\usepackage{doi}
\usepackage{tikz}
\usepackage{pgfplots}
\pgfplotsset{compat=1.18}
\usetikzlibrary{arrows.meta}

\usepackage{xcolor}
\usepackage{hyperref}
\usepackage{bookmark}

\hypersetup{
	colorlinks=true,
	linkcolor=red,       
	citecolor=green,     
	urlcolor=magenta,    
	filecolor=magenta,
	linktoc=all,
	pdfstartview={FitH},
	pdftitle={Sendov's conjecture holds for all n >= 10^200000},
	pdfauthor={Teng Zhang},
	pdfsubject={Sendov's conjecture holds for all n >= 10^200000},
	pdfkeywords={Sendov's conjecture, critical points, logarithmic potential, balayage}
}

\newtheorem{thm}{Theorem}[section]
\newtheorem{lem}[thm]{Lemma}
\newtheorem{prop}[thm]{Proposition}
\newtheorem{cor}[thm]{Corollary}

\newtheorem{conj}[thm]{Conjecture}
\theoremstyle{definition}

\newtheorem{rem}[thm]{Remark}

\numberwithin{equation}{section}
\numberwithin{table}{section}

\newcommand{\C}{\mathbb C}
\newcommand{\D}{\mathbb D}
\newcommand{\E}{\mathbb E}
\newcommand{\Pp}{\mathbb P}
\newcommand{\1}{\mathbbm 1}
\newcommand{\dd}{\,d}
\newcommand{\supp}{\operatorname{supp}}
\newcommand{\dist}{\operatorname{dist}}
\newcommand{\wind}{\operatorname{wind}}
\newcommand{\Bal}{\operatorname{Bal}}
\newcommand{\Norg}{N_{\mathrm{org}}}
\newcommand{\Nstar}{N_{\ast}}

\makeatother

\begin{document}

\title[Sendov's conjecture holds for all $n\ge 10^{200000}$ ]
{Sendov's conjecture holds for every degree
	$n\ge 10^{200000}$}

\author[T.~Zhang]{Teng Zhang}

\address{School of Mathematics and Statistics, Xi'an Jiaotong University, Xi'an 710049, P. R. China}
\email{teng.zhang@stu.xjtu.edu.cn}

\subjclass[2020]{Primary 30C15; Secondary 30C10, 31A15}

\keywords{Sendov's conjecture; critical points of polynomials; logarithmic potential; balayage; Erd\H{o}s--Tur\'an inequality}

\begin{abstract}
Sendov's conjecture, first formulated in 1958, asserts that if a
complex polynomial \(f\) of degree \(n \geq 2\) has all its zeros in
the closed unit disk $\{z \in \mathbb{C} : |z| \leq 1\},$
then, for every zero \(\lambda_0\) of \(f\), there exists a critical
point \(\zeta\) of \(f\) such that $ |\zeta-\lambda_0| \leq 1.$
The conjecture was previously known to hold for polynomials of degree
\(n \leq 8\), as well as in several special higher-degree cases. In
2022, using compactness methods, balayage, and the argument principle,
Tao~\cite{Tao22} proved that there exists an absolute constant \(n_0\)
such that Sendov's conjecture holds for all \(n \geq n_0\). However,
Tao's argument does not provide an explicit admissible value of
\(n_0\). In the present paper, we make Tao's result effective and prove
that one may take
\[
n_0 = 10^{200000}.
\]
\end{abstract}

\maketitle
\enlargethispage{4pt}

\section{Introduction}
\label{sec:introduction}

For \(z_0\in\C\) and \(r>0\), write
\[
 D(z_0,r)=\{z\in\C:|z-z_0|<r\},
 \qquad
 \overline{D(z_0,r)}=\{z\in\C:|z-z_0|\le r\},
\]
and let \(\D=D(0,1)\).  

In 1958, the Bulgarian mathematician Blagovest Sendov formulated the
conjecture now known as \emph{Sendov's conjecture}. It was subsequently
included in Hayman's influential 1967 monograph
\emph{Research Problems in Function Theory}~\cite[Problem~4.5]{Hay67}.

\begin{conj}[Sendov]
\label{conj:sendov}
Let \(f\) be a complex polynomial of degree \(n\ge2\), all of whose
zeros lie in \(\overline{\D}\).  If \(\lambda_0\) is a zero of \(f\),
then \(f\) has a critical point in \(\overline{D(\lambda_0,1)}\).
\end{conj}

We first give a historical overview of the progress toward
Conjecture~\ref{conj:sendov}.

\begin{itemize}[
	label=\(\bullet\),
	leftmargin=2em,
	itemsep=0.8em,
	topsep=0.5em
	]
	
\item In 1968, Rubinstein~\cite{Rub68} proved
Conjecture~\ref{conj:sendov} when the distinguished zero
\(\lambda_0\) lies on the unit circle. In the same paper, he also
verified the cases \(n=3,4\).
	
\item In 1969, Goodman, Rahman, and Ratti~\cite{GRR69} established a
stronger localization result for a distinguished zero
\(\lambda_0\) on the unit circle, proving that \(f\) has a critical
point \(\zeta\) satisfying
$
\left|\zeta-\lambda_0/2\right|\leq 1/2.
$
In the same year, Joyal~\cite{Joy69} independently verified Conjecture~\ref{conj:sendov} for distinguished zeros on the unit circle; namely, if
\(|\lambda_0|=1\), then there exists a critical point \(\zeta\) of
\(f\) such that
$
|\zeta-\lambda_0|\leq 1.
$
Also in 1969, Meir and Sharma~\cite{MS69} proved Conjecture~\ref{conj:sendov}
for all polynomials of degree \(n\leq 5\).

\item In 1985, Bojanov, Rahman, and Szynal~\cite{BRS85} proved that if
\(f(z)=\prod_{j=1}^{n}(z-\lambda_j)\) has all its zeros in the closed
unit disk, then, for every \(\nu=1,\ldots,n\), the closed disk
$
\overline{D}\!\left(
\lambda_\nu,
\left(1+\left|\prod_{j=1}^{n}\lambda_j\right|\right)^{1/n}
\right)
$
contains at least one zero of \(f'\). Consequently, for each zero
\(\lambda_\nu\) of \(f\), there exists a critical point \(\zeta\) of
\(f\) such that \(|\zeta-\lambda_\nu|\leq2^{1/n}\). Thus, they obtained
Sendov's conclusion with the radius \(1\) replaced by the asymptotically
sharp radius
$
2^{1/n}=1+\frac{\log2}{n}+O(n^{-2}).
$
	
	\item In 1991,
	Brown 	\cite{Bro91} proved the case $n=6$.
	
	\item In 1993, Miller \cite{Mil93} proved that
	for every fixed degree \(n\),  there exists a
	constant \(\varepsilon_n>0\), depending only on \(n\), such that
$
	|\lambda_0|\geq 1-\varepsilon_n
\Longrightarrow
$ there exists a critical point \(\zeta\) of \(f\)
satisfying
$
|\zeta-\lambda_0|\leq 1.
$
	Thus, for each fixed degree $n$, Conjecture~\ref{conj:sendov} holds throughout
	a sufficiently thin annulus adjacent to the unit circle.
	The argument was qualitative in the sense that it did not provide
	a usable explicit value of \(\varepsilon_n\).
	Independently of Miller, V\^aj\^aitu and Zaharescu \cite{VZ93} proved that, for
	every fixed \(n\), there exists \(\varepsilon_n>0\) such that
$
	1-\varepsilon_n\leq|\lambda_0|\leq 1
\Longrightarrow
$ there exists a critical point \(\zeta\) of \(f\)
satisfying
$
|\zeta-\lambda_0|\leq 1.
$
	Their proof likewise did not yield an explicit numerical estimate
	for \(\varepsilon_n\).

	\item In 1996,
	Borcea \cite{Bor96}  proved the case $n\le 7$.

	\item In 1999,
	Brown and Xiang \cite{BX99} proved the case $n\le 8$.

	\item In 2010,
	Chijiwa \cite{Chi10} initiated a quantitative version of the near-boundary
	analysis. Under the hypothetical assumption that a zero
	\(\lambda_0\) is close to the unit circle but has no critical point
	within unit distance, he derived explicit estimates forcing the
	critical points to lie close to the origin and forcing the zeros of
	the polynomial to be close to a rotated configuration of the
	\(n\)-th roots of unity.
	These estimates provided the quantitative foundation for his
	explicit near-boundary theorem in the following year.
	
	\item In 2011,
	Chijiwa \cite{Chi11} made the near-boundary result fully explicit. Let \(n\geq4\)
	and define
$
	\varepsilon_n^{\mathrm C}
	:=\frac{1}{2n^9 4^n}.
$
	If
$
	|\lambda_0|\geq 1-\varepsilon_n^{\mathrm C},
$
	then there exists a critical point \(\zeta\) such that
	\[
	|\zeta-\lambda_0|
	\leq
	1-c_n\bigl(1-|\lambda_0|\bigr),
	\]
	where
	\[
	c_n=
	\begin{cases}
		\dfrac14,
		& n\equiv0\pmod 4,\\[6pt]
		\dfrac{n-3}{4(n-1)},
		& n\equiv1\pmod 4,\\[6pt]
		\dfrac{n-6}{4(n-1)},
		& n\equiv2\pmod 4,\\[6pt]
		\dfrac{n-9}{4(n-1)},
		& n\equiv3\pmod 4.
	\end{cases}
	\]
	In particular, together with the already known low-degree cases,
	this gives Conjecture~\ref{conj:sendov} whenever the distinguished zero is
	exponentially close to the unit circle.
\item In the same year, Bojanov~\cite{Boj11} showed that
Conjecture~\ref{conj:sendov} holds whenever
$
|\lambda_0|\leq \frac{1}{n-1}.
$
We note that the same conclusion follows immediately from the classical
de Bruijn--Springer mean-modulus inequality~\cite{dBS47}.
	
	\item In 2014,
	Kasmalkar \cite{Kas14} enlarged Chijiwa's admissible near-boundary
	region. For \(n\geq8\), define
$
	\varepsilon_n^{\mathrm K}
	:=\frac{90}{n^{12}\log n}.
$
	If
$
	|\lambda_0|\geq 1-\varepsilon_n^{\mathrm K},
$
	then there exists a critical point \(\zeta\) satisfying
	\[
	|\zeta-\lambda_0|
	\leq
	1-c_n\bigl(1-|\lambda_0|\bigr),
	\]
	where \(c_n\) is the same explicit constant as in Chijiwa's theorem.
	Thus Kasmalkar improved the required closeness from the exponentially
	small scale
$
	\frac{1}{2n^9 4^n}
$
	to the polynomial scale
$
	\frac{90}{n^{12}\log n}.
$
In the same year, D\'egot~\cite{Deg14} proved a high-degree result for
each fixed intermediate radius. More precisely, for every fixed
\(r\in(0,1)\), there exists an integer \(N(r)\) such that, whenever
\(\lambda_0\) is a zero of \(f\) with \(|\lambda_0|=r\) and
\(n\geq N(r)\), there exists a critical point \(\zeta\) of \(f\)
satisfying
$
|\zeta-\lambda_0|\leq 1.
$
The threshold in this theorem depends on the modulus of the
distinguished zero and is therefore not uniform for \(r\in(0,1)\).
	
	\item In 2020, 
	Chalebgwa \cite{Cha20} made the dependence in D\'egot's theorem explicit. If
$
	0<r=|\lambda_0|<1
$
	and
$
	n\geq
	\frac{20800}{r^7(1-r)^4},
$
	then
 there exists a critical point \(\zeta\) of \(f\)
satisfying
$
|\zeta-\lambda_0|\leq 1.
$
	Equivalently, for a fixed degree \(n\), this result covers the
	intermediate range in which \(r\) is neither too close to \(0\) nor
	too close to \(1\).
	
	\item In 2022, Tao~\cite{Tao22} proved that there exists an absolute constant \(n_0\)
	such that Conjecture~\ref{conj:sendov} holds for all \(n \geq n_0\). 
	His proof separates the argument into three regimes:
	\begin{enumerate}
		\item when \(|\lambda_0|\) is bounded away from both \(0\) and
		\(1\), it uses the results of D\'egot and Chalebgwa;
		
		\item when \(|\lambda_0|\) is close to \(1\), it refines Miller's
		near-boundary argument and invokes Chijiwa's quantitative theorem
		in the extremely close regime;
		
		\item when \(|\lambda_0|\) is close to \(0\), it introduces a new
		argument based on compactness, balayage, logarithmic potentials,
		and the argument principle.
	\end{enumerate}
	Because the proof uses a compactness-and-contradiction argument, it
	does not yield an explicit numerical value of \(n_0\).
	
\end{itemize}

In this paper, we make Tao's theorem for sufficiently large
degrees~\cite{Tao22} effective. In Tao's theorem, one may take
\[
n_0=10^{200000}.
\]

\begin{thm}
	\label{thm:main}
	Sendov's conjecture holds for every degree
	\[
	n\geq \Nstar:=10^{200000}.
	\]
\end{thm}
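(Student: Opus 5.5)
Write $a=|\lambda_0|$ and argue by contradiction: assume $f$ has degree $n\ge\Nstar$, all zeros in $\overline{\D}$, and a zero $\lambda_0$ with no critical point in $\overline{D(\lambda_0,1)}$. After a rotation I will take $\lambda_0=a\in[0,1]$. The plan follows Tao's three-regime decomposition, but every compactness step will be replaced by a quantitative estimate with explicit constants.

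\textbf{Middle regime.} Fix explicit thresholds $0<\delta_0<1-\delta_1<1$ and first treat $\delta_0\le a\le 1-\delta_1$. Here Chalebgwa's theorem applies as soon as
$$n\ge \frac{20800}{a^7(1-a)^4}.$$
I will choose $\delta_0,\delta_1$ so that this bound is at most $\Nstar$. Since the other two regimes will be the expensive ones, I will take $\delta_0=n^{-c}$ and $\delta_1=n^{-c}$ for a small $c$, rather than constants.

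\textbf{Near-boundary regime ($1-a\le\delta_1$).} Here I will follow Miller's and Tao's refinement, using Chijiwa's and Kasmalkar's explicit theorems.
\begin{itemize}
\item When $1-a\le 90/(n^{12}\log n)$, Kasmalkar's result finishes directly.
\item For $90/(n^{12}\log n)<1-a\le\delta_1$, I will rerun Tao's argument quantitatively. The absence of critical points in $D(a,1)$ forces the critical points to cluster near the circle $|z|=1-a$ in a suitable sense, and forces the zeros to equidistribute near the unit circle.
\item The equidistribution will be made quantitative with Erd\H{o}s--Tur\'an discrepancy bounds applied to the zeros and critical points, giving a discrepancy of order $O(\sqrt{\log n/n})$ times explicit factors.
\item Then I will compare $\log|f|$ against the logarithmic potential of the uniform measure, and derive a contradiction from $f(a)=0$ together with the sign of an explicit second-order term in $1-a$.
\end{itemize}

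\textbf{Near-origin regime ($a\le\delta_0$).} This is Tao's new argument, and I expect it to be the main obstacle.
\begin{itemize}
\item For $a\le 1/(n-1)$, Bojanov's result applies directly.
\item Otherwise, Tao's argument shows that the zero measure $\mu$ and the critical-point measure $\nu$ are both close to a limiting pair in which $\nu$ is the balayage of $\mu$ onto a region avoiding $D(a,1)$. The contradiction then comes from the argument principle applied to $f'/f$ along a contour around $D(a,1)$.
\item To make this effective, each compactness step must become an explicit stability estimate. Specifically: (i) Wasserstein or discrepancy closeness of $\mu$ to the limiting measure, via Erd\H{o}s--Tur\'an and the potential identity $U_\mu-U_\nu=\frac1n\log|f/f'|+O(\log n/n)$; (ii) a quantitative uniqueness statement for balayage onto the complement of the disc, proved by explicit Poisson-kernel estimates; (iii) a lower bound on the winding-number defect along the contour, uniform in the perturbation.
\item The losses compound. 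Each stage costs roughly a power of $\log n$ or a factor $\exp(O(1/\delta))$, so the final requirement will look like $\log n\gtrsim \delta_0^{-K}$ for some explicit $K$, and this is why $\Nstar$ ends up astronomically large.
\end{itemize}

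\textbf{Bookkeeping.} Finally I will check that the three ranges of $a$ overlap for every $n\ge\Nstar$. The near-origin regime must cover $a\le\delta_0$, the near-boundary regime must cover $1-a\le\delta_1$, and Chalebgwa's bound must hold on the middle range; with a single polynomial-in-$n$ choice of the thresholds, all three should be satisfied once $n\ge 10^{200000}$.

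The hardest step will be (ii)–(iii) in the near-origin regime, which is Tao's contradiction near the limit configuration. I plan to handle it by showing that the relevant logarithmic-potential functional has a strict explicit gap, of size bounded below by an absolute constant times $a$. Perturbations of size $o(a)$ then cannot close the gap.
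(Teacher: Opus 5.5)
\textbf{Verdict: there is a genuine gap.} Your outer decomposition matches the paper's: Chalebgwa on $n^{-c}\le a\le 1-n^{-c}$ with $11c<1$; an explicit theorem when $1-a$ is extremely small (Kasmalkar in place of Chijiwa is fine and only shrinks what is left); and effective versions of Tao's two endpoint arguments. But those endpoint arguments are the whole content of the theorem, and the mechanisms you name for them would not work.

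\textbf{Near the origin.} There is no single limiting pair to be close to. Tao's subsequential limits of the laws of $\lambda$ and $\xi$ coincide and are supported on the left semicircle $C$, but are otherwise arbitrary. One is not a balayage of the other: balayage enters only through $\|\Bal_R(\lambda)-\Bal_R(\xi)\|_\infty=O(\log(1/t)/(nt^2))$ on $|z|=1+t$. Since nothing is equidistributed, Erd\H{o}s--Tur\'an plays no role here.

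The non-effective step in Tao's proof is the choice of the circle from the zero set of the Stieltjes transform of that limit. So what you need is a winding statement uniform over all probability measures $\nu$ on $C$. The paper gets it from $s_\nu(z)=-\alpha+z+\mathcal R_\nu(z)$, where $\alpha=\E\overline W$ and $|\mathcal R_\nu(z)|\le2|\alpha||z|+|z|^2/(1-|z|)$. If $|\alpha|\ge1/20$, every circle $|z|=r$ with $r\in[1/200,1/100]$ has winding number $0$; otherwise every $r\in[1/5,21/100]$ gives winding number $1$. In both cases $|s_\nu|$ is bounded below by an absolute constant. The argument principle for $f'/f$ on such a circle gives winding number $\le-1$; note this circle lies inside $D(a,1)$ and encloses $a$, so it is not a contour around $D(a,1)$.

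Your closing step, a gap of size $\gtrsim a$ that perturbations of size $o(a)$ cannot close, fails. The Poisson-kernel test bounds the proportion $q$ of zeros at distance $\ge\rho$ from $C$ only by $O(\rho^{-3}(a+n^{-1/3}\log n))$. Choosing a circle free of zeros of $f$ costs another factor $\log n$. So the controllable perturbation of $s_\lambda$ is $O(a\log n)$, not $o(a)$. In the paper the argument closes because the gap is absolute and $a\log n\to0$ for $a\le n^{-c}$. For the same reason, your bookkeeping ``$\log n\gtrsim\delta_0^{-K}$'' is incompatible with your choice $\delta_0=n^{-c}$. The near-origin argument must run at fixed scales (the paper takes $\rho=10^{-3}$ and fixed radii), with small $a$ only helping.

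\textbf{Near the unit circle.} Here too the plan lacks the mechanism. Equidistribution of the zeros is shared by polynomials such as $z^n-a^n$, which satisfy the conclusion. So comparing $\log|f|$ with the uniform potential cannot by itself give a contradiction. In the paper, Erd\H{o}s--Tur\'an is used only to find a zero within $4\cdot10^{-400}$ of every point $e^{i\theta}$. Its bound is governed by $\sqrt V$ with $V=\E|\xi|^2$, known a priori only to be $O(n^{-1/64})$, so no $\sqrt{\log n/n}$ rate is available.

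The missing idea is to integrate $f''/f'=(n-1)s_\xi$ along a path from such a zero $z$ to $|w|=0.01$, and then to integrate $f'$. Along the path $s_\xi(w)=1/(w-\mu)+O(\sigma^2)$, with $\mu=\E\xi$ and $\sigma^2=\E|\xi-\mu|^2$. This yields $f(0)+\frac{f'(z)(z-\mu)}{n}(1+O(\sigma^2))\approx0$, and hence the level-set relation $U_\xi(z)\approx U_\xi(a)$.

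That requires paths staying a fixed distance from every critical point. The critical points lie in the lune $\{|w|\le1,\ |w-a|>1\}$, hence near the arc $\mathcal A=\{|w|\le1,\ |1-w|=1\}$, not near the circle $|w|=1-a$. The exceptions are fewer than $10^{32}$ outliers, counted via $\sum_j\log|a-\xi_j|=\log(|f'(a)|/n)<\log16$. These outliers are exactly where Tao's compactness produces an exceptional set of directions. You must replace that set by an explicit $\pi$-invariant set $H$ of small measure.

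Finally, the contradiction is not the sign of a second-order term in $1-a$. The level-set relation gives $|\mu|,\,1-a=O(\sigma^2)$ up to exponentially small errors, and a one-sided bound $U_\xi(a)-U_\xi(e^{i\theta})\ge-\varepsilon$ for $\theta\notin H$. The zeroth and second Fourier coefficients of $U_\xi$ on $\partial\D$ then force $|\Re\E\xi^2|\ll V$. On the other hand, $\Re y^2\le-\frac12|y|^2$ on $\mathcal A$ forces $\Re\E\xi^2\le-\frac14V+O((1-a)^2+\E\log|a-\xi|)$. The decisive second-order quantity is the variance of the critical points, and $1-a$ is itself only $O(\sigma^2)$.
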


For each fixed degree \(n\), Sendov's conjecture can be formulated as a
first-order sentence in the language of ordered fields. Hence, by
Tarski's quantifier-elimination theorem~\cite{Tar31} for real closed
fields, it is decidable in finite time. An explicit real-algebraic
formulation, together with accompanying computational implementations,
was given by Spjut~\cite[Chapters~3--4]{Spj20}.

\begin{rem}
	Theorem~\ref{thm:main} implies that Sendov's conjecture is effectively
	decidable. Indeed, it remains only to verify the finitely many degrees
$
	2\leq n<\Nstar,
$
	and, for each such degree, the validity of the conjecture can be decided
	in finite time by quantifier elimination. Thus, applying the decision
	procedure successively to these finitely many degrees yields, in
	principle, a finite algorithm for deciding Sendov's conjecture in full.
\end{rem}

\begin{rem}
	The numerical bound in Theorem~\ref{thm:main} can clearly be improved by refining the constants and
	parameter choices throughout the proof.  Such an optimization is not
	essential to the present work; the main point is to obtain a completely
	explicit universal degree threshold.
\end{rem}

\vspace{0.1in}
\noindent\textbf{Sketch of the proofs and new ideas.}
The magnitude of \(\Nstar\) in Theorem~\ref{thm:main} reflects the deliberate use of broad
numerical margins.  The logical structure is nonetheless short.  After
rotation and multiplication by a nonzero scalar, a hypothetical
counterexample has a monic polynomial \(f\), a distinguished zero
\(a\in[0,1]\), and no critical point in
\(\overline{D(a,1)}\).  Four ranges cover all \(a>0\):
\[
\begin{array}{ccl}
0<a\le n^{-1/8}
&:& \text{the effective near-origin argument of Section~\ref{sec:origin}},\\
a>n^{-1/8},\quad 1-a\ge n^{-1/64}
&:& \text{Chalebgwa's explicit theorem},\\
a>n^{-1/8},\quad 8^{-n}\le1-a<n^{-1/64}
&:& \text{the effective boundary argument of Section~\ref{sec:circle}},\\
a>n^{-1/8},\quad 0\le1-a<8^{-n}
&:& \text{Chijiwa's explicit theorem}.
\end{array}
\]
The case \(a=0\) follows immediately from the Gauss--Lucas theorem \cite[pp.~71--72, Theorem~2.1.1]{RS02}.
The proof of Theorem~\ref{thm:main} is assembled in
Section~\ref{sec:assembly}.

We now describe the two new quantitative ingredients. Near the origin,
under the counterexample hypothesis in the regime \(a\to0\),
Tao~\cite[Theorem~1.10(i)]{Tao22} proved that the subsequential
limiting laws of the zeros and critical points are both supported on
the left semicircle \(C\), and that these two limiting laws coincide.
Instead of passing to a limiting measure, we compare the balayages of
the zero and critical-point measures at two exterior radii. A
Poisson-kernel test then places all but an explicitly bounded
proportion of the zeros near the left semicircle. A fixed
winding-number dichotomy and a selected circle complete the argument.

Near the unit circle, Tao's compactness argument \cite[Proposition~5.3]{Tao22} produces an at most
countable exceptional set.  We replace it by fewer than
\(10^{32}\) explicit obstacles.  Away from a set of angles of total
length less than \(10^{-20}\),
an actual zero close to the prescribed boundary direction can be joined
to the small disk by a path that stays a definite distance from every
critical point.  Integration of \(f''/f'\) along this path gives an
effective level-set relation.  The zeroth and second Fourier coefficients
of the boundary potential then contradict the geometry of the arc
$
 \{z:|z|\le1,\ |1-z|=1\}.
$

\vspace{0.1in}
\noindent\textbf{Organization of the paper.}
Section~\ref{sec:preliminaries} collects the basic identities and
support properties for the zero and critical-point measures, and
establishes an explicit comparison between their moments.
Section~\ref{sec:origin} develops an effective argument for
distinguished zeros near the origin, based on quantitative balayage
estimates, harmonic-measure separation, and a winding-number
argument.  Section~\ref{sec:circle} treats distinguished zeros near
the unit circle.  Its main ingredients are a finite
obstacle-avoidance construction, an effective level-set relation for
the critical-point potential, and incompatible Fourier and geometric
estimates for the second moment.  In
Section~\ref{sec:assembly}, the two endpoint arguments are combined
with the quantitative results of Chalebgwa and Chijiwa to prove
Theorem~\ref{thm:main}.  Finally,
Appendix~\ref{sec:numerical-audit} records the principal numerical
inequalities and margins used throughout the proof.

\vspace{0.1in}
\noindent\textbf{Acknowledgments.} The author would like to express his sincere gratitude to Professor Minghua Lin for introducing him to Sendov's conjecture in 2021 and for teaching him how matrix methods can be used to study the geometry of polynomials. He is also deeply grateful to Professor Zongben Xu for his kind support and concern regarding both his academic work and personal life. The author would also like to thank Professors Stephen Drury, Hristo Sendov, and Vilmos Totik for the valuable discussions and exchanges during his PhD years. Special thanks are due to Professor Lajos Molnár for his generous assistance and hospitality during the author’s visit to the University of Szeged in 2026. DeepSeek was used for language editing and to assist with literature searches, while MATLAB was used for numerical computations.
This work was supported by the China Scholarship Council, the Young Elite Scientists Sponsorship Program for PhD Students of the China Association for Science and Technology, and the Fundamental Research Funds for the Central Universities at Xi’an Jiaotong University (Grant No. xzy022024045).

\section{Preliminaries}
\label{sec:preliminaries}

Let \(n\ge2\), and let \(f\) be monic of degree \(n\), all of whose
zeros lie in \(\overline{\D}\). Let \(\lambda_1,\ldots,\lambda_n\) and
\(\xi_1,\ldots,\xi_{n-1}\) denote the zeros of \(f\) and \(f'\),
respectively, counted with multiplicity.  We write
\(\lambda\) and \(\xi\) for uniformly chosen elements of these two
multisets.  Define
\begin{equation*}
 U_X(z):=\E\log\frac1{|z-X|},
 \qquad
 s_X(z):=\E\frac1{z-X}.
\end{equation*}
These functions are used only where the displayed expressions are
finite.

\begin{lem}[{\cite[Lemma~1.6, (ii)--(v)]{Tao22}}]
\label{lem:basic-identities}
For every monic polynomial \(f\) as above,
\begin{equation*}
 \E\lambda=\E\xi.
\end{equation*}
Moreover,
\begin{align}
 U_\lambda(z)&=-\frac1n\log|f(z)|, \label{eq:root-potential}\\
 U_\xi(z)&=\frac{\log n-\log|f'(z)|}{n-1}, \label{eq:critical-potential}\\
 s_\lambda(z)&=\frac{f'(z)}{nf(z)}, \label{eq:log-derivative}\\
 s_\xi(z)&=\frac{f''(z)}{(n-1)f'(z)}. \label{eq:critical-log-derivative}
\end{align}
Here \eqref{eq:root-potential} and \eqref{eq:log-derivative}
hold whenever \(f(z)\ne0\), whereas
\eqref{eq:critical-potential} and
\eqref{eq:critical-log-derivative} hold whenever \(f'(z)\ne0\).
If \(f(z)f'(z)\ne0\), then
\begin{equation}
 U_\lambda(z)-\frac{n-1}{n}U_\xi(z)
 =\frac1n\log|s_\lambda(z)|.
 \label{eq:potential-identity}
\end{equation}
\end{lem}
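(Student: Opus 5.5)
The plan is to prove each identity directly from the factorizations
\[
f(z)=\prod_{j=1}^n(z-\lambda_j),\qquad f'(z)=n\prod_{k=1}^{n-1}(z-\xi_k),
\]
where the second holds because \(f\) is monic of degree \(n\), so \(f'\) has leading coefficient \(n\).

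For \eqref{eq:root-potential}, take \(-\log|\cdot|\) of the first factorization and divide by \(n\); this is \(U_\lambda(z)\) whenever \(f(z)\neq0\). For \eqref{eq:critical-potential}, the same step applied to the second factorization gives \(\log|f'(z)|=\log n+\sum_k\log|z-\xi_k|\), and dividing by \(n-1\) yields the formula. The logarithmic derivatives give the other two: \(f'/f=\sum_j 1/(z-\lambda_j)\) yields \eqref{eq:log-derivative}, and \(f''/f'=\sum_k 1/(z-\xi_k)\) yields \eqref{eq:critical-log-derivative}.

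For \eqref{eq:potential-identity}, compute
\[
U_\lambda-\tfrac{n-1}{n}U_\xi=-\tfrac1n\log|f|-\tfrac1n(\log n-\log|f'|)=\tfrac1n\log\Bigl|\tfrac{f'}{nf}\Bigr|,
\]
and by \eqref{eq:log-derivative} the right-hand side is \(\tfrac1n\log|s_\lambda(z)|\). This needs \(f(z)f'(z)\neq0\) so that every term is finite.

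For \(\E\lambda=\E\xi\), write \(f(z)=z^n-\sigma z^{n-1}+\dots\) with \(\sigma=\sum_j\lambda_j\). Then \(f'(z)=nz^{n-1}-(n-1)\sigma z^{n-2}+\dots\), so by Vieta the roots of \(f'\) sum to \((n-1)\sigma/n\). Dividing by \(n-1\) gives \(\E\xi=\sigma/n=\E\lambda\).

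There is no real obstacle; the only care needed is the leading coefficient \(n\) of \(f'\), which is the source of the \(\log n\) term.
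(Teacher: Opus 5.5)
Your proof is correct: the factorizations \(f(z)=\prod_j(z-\lambda_j)\) and \(f'(z)=n\prod_k(z-\xi_k)\), their logarithms and logarithmic derivatives, and Vieta's formula for the \(z^{n-2}\) coefficient of \(f'\) give every identity, including the \(\log n\) term and the stated nonvanishing conditions. The paper gives no proof of its own and cites Tao's Lemma~1.6, which rests on exactly these elementary facts, so your direct verification is the standard argument.
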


The next elementary observation records the support restrictions imposed
by a counterexample.

\begin{lem}[{\cite[Lemma~1.6, (i), (ii)]{Tao22}}]
\label{lem:counterexample-support}
Suppose that \(f(a)=0\), where \(a\in[0,1]\), and that \(f'\) has no
zero in \(\overline{D(a,1)}\).  Then
\[
 |\xi|\le1,\qquad |a-\xi|>1
\]
almost surely, and \(\E\lambda=\E\xi\).
\end{lem}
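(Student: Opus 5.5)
The three assertions follow from the Gauss--Lucas theorem, the counterexample hypothesis, and Lemma~\ref{lem:basic-identities}; I will take them in that order.

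\emph{Step 1: $|\xi|\le1$.} By the Gauss--Lucas theorem, every zero of $f'$ lies in the convex hull of the zeros of $f$. All zeros of $f$ lie in $\overline{\D}$, and $\overline{\D}$ is convex, so that hull is contained in $\overline{\D}$. Hence every critical point $\xi_j$ satisfies $|\xi_j|\le1$. Since $\xi$ is drawn uniformly from the finite multiset $\{\xi_1,\ldots,\xi_{n-1}\}$, the bound $|\xi|\le1$ holds almost surely, indeed surely.

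\emph{Step 2: $|a-\xi|>1$.} By hypothesis $f'$ has no zero in $\overline{D(a,1)}=\{z:|z-a|\le1\}$. So every $\xi_j$ satisfies $|\xi_j-a|>1$, and this again holds for the random variable $\xi$ with probability one.

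\emph{Step 3: $\E\lambda=\E\xi$.} This is exactly the first identity of Lemma~\ref{lem:basic-identities}. For completeness, it can be checked by writing $f(z)=z^n-\bigl(\sum_i\lambda_i\bigr)z^{n-1}+\cdots$, so that
\[
f'(z)=nz^{n-1}-(n-1)\Bigl(\sum_i\lambda_i\Bigr)z^{n-2}+\cdots .
\]
By Vieta's formulas the zeros of $f'$ then satisfy
\[
\sum_j\xi_j=\frac{n-1}{n}\sum_i\lambda_i .
\]
Dividing by $n-1$ gives $\frac1{n-1}\sum_j\xi_j=\frac1n\sum_i\lambda_i$, that is, $\E\xi=\E\lambda$.

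Nothing here is a real obstacle. The only point to watch is the degenerate case $n\ge2$ with $f'$ constant, which cannot happen since $f'$ has degree $n-1\ge1$, so the multiset of critical points is nonempty and $\xi$ is well defined.
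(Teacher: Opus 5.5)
Your proof is correct and is essentially the argument the paper relies on. The paper gives no proof of its own and simply cites Tao's Lemma~1.6(i),(ii), which consists of Gauss--Lucas for $|\xi|\le1$, the counterexample hypothesis for $|a-\xi|>1$, and the Vieta mean identity $\E\lambda=\E\xi$ (also recorded in Lemma~\ref{lem:basic-identities}).
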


The following observation, due to Komarova and
Rivin~\cite[Lemma~5.7]{KR03}, plays an important role in our
derivation.  It has also recently been used by Tang and the author in
the study of Schoenberg-type inequalities~\cite{TZ25}.

\begin{lem}[{\cite[Lemma~5.7]{KR03}}]\label{lem:zpprimez}
	Let \( n \geq 2 \) and \( \lambda_1, \lambda_2, \ldots, \lambda_n \in \mathbb{C} \). Let \( p(z) = \prod_{j=1}^n (z - \lambda_j) \), and define \( D = \operatorname{diag}(\lambda_1, \lambda_2, \ldots, \lambda_n) \). Let \(I_n\) denote the \(n\times n\) identity matrix, and let \(J_n\) denote the \(n\times n\) matrix whose entries are all equal to \(1\). Then the characteristic polynomial of the matrix \( D \left( I_n - \frac{1}{n} J_n \right) \) has the same multiset of zeros as the polynomial \( z p'(z) \). 
\end{lem}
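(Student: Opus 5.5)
We prove Lemma~\ref{lem:zpprimez} by computing the characteristic polynomial of \(M:=D(I_n-\frac1nJ_n)\) explicitly. The matrix is a rank-one perturbation of \(D\): writing \(\mathbf 1\) for the all-ones column vector, we have \(J_n=\mathbf 1\mathbf 1^{T}\), so
\[
M=D-\tfrac1n (D\mathbf 1)\mathbf 1^{T}.
\]
The natural tool is the matrix determinant lemma \(\det(A+uv^{T})=\det(A)\,(1+v^{T}A^{-1}u)\) applied to \(A=zI_n-D\), \(u=\frac1n D\mathbf 1\) and \(v=\mathbf 1\).

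For \(z\notin\{\lambda_1,\dots,\lambda_n\}\) the matrix \(A\) is invertible and diagonal with \(\det A=p(z)\). The lemma gives
\[
\det(zI_n-M)=p(z)\Bigl(1+\frac1n\sum_{j=1}^n\frac{\lambda_j}{z-\lambda_j}\Bigr).
\]
Next rewrite \(\frac{\lambda_j}{z-\lambda_j}=\frac{z}{z-\lambda_j}-1\). The bracket then becomes \(\frac zn\sum_j\frac1{z-\lambda_j}\), which equals \(\frac{z\,p'(z)}{n\,p(z)}\) by the logarithmic derivative (compare \eqref{eq:log-derivative}). Hence
\[
\det(zI_n-M)=\frac1n\,z\,p'(z).
\]

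Both sides are polynomials in \(z\), and they agree off a finite set, so they agree identically. Since \(p'\) has leading coefficient \(n\), the polynomial \(\frac1n z p'(z)\) is monic of degree \(n\). Thus the characteristic polynomial of \(M\) is exactly \(\frac1n zp'(z)\), and its zeros coincide with those of \(zp'(z)\), with multiplicity.

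The only point that needs care is repeated or zero \(\lambda_j\). This is handled by the polynomial-identity step rather than by perturbing the \(\lambda_j\), so there is no genuine obstacle. If one prefers to avoid the determinant lemma, an alternative is to check directly that for each root \(w\) of \(p'\) the vector with entries \(x_j=\lambda_j/(w-\lambda_j)\) is an eigenvector. This works generically, and the general case follows by continuity of roots, but the determinant route is cleaner.
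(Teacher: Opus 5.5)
Your proof is correct and complete. With $A=zI_n-D$, $u=\frac1n D\mathbf 1$ and $v=\mathbf 1$ one indeed has $zI_n-M=A+uv^{T}$. The determinant lemma together with $\frac{\lambda_j}{z-\lambda_j}=\frac{z}{z-\lambda_j}-1$ gives $\det(zI_n-M)=\frac1n zp'(z)$ off the finite set $\{\lambda_1,\dots,\lambda_n\}$. The identity of polynomials then removes that restriction, so repeated or zero $\lambda_j$ need no separate treatment.

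The paper gives no proof of its own to compare with: it quotes the lemma from Komarova--Rivin and uses it only through Remark~\ref{rem:zpprimez} in the proof of Lemma~\ref{lem:moment-comparison}. What your route supplies is a short, self-contained verification that yields the normalized identity $\det(zI_n-M)=\frac1n zp'(z)$ with multiplicities. That is exactly the form invoked there, namely that $PDP$ has characteristic polynomial $zf'(z)/n$, whence $\operatorname{tr}\bigl((PDP)^m\bigr)=\sum_{j=1}^{n-1}\xi_j^m$ for $m\ge1$.

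Your eigenvector alternative would only identify the eigenvalues generically, without multiplicities, and would need the continuity argument you mention. The determinant-lemma version is the one to keep.
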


\begin{rem}\label{rem:zpprimez}
	Clearly, \(I_n - \tfrac{1}{n} J_n\) is a projection matrix. Since \(AB\) and \(BA\) have the same spectrum for any \(n\times n\) complex matrices \(A,B\), all eigenvalues of
	\(D \left(I_n - \tfrac{1}{n} J_n\right)\) coincide with those of \( \left(I_n - \tfrac{1}{n} J_n\right) D \left(I_n - \tfrac{1}{n} J_n\right)\).
\end{rem}

Tao's exterior-potential argument~\cite[p.~363]{Tao22} gives the non-asymptotic moment
comparison
\[
\left|\E\lambda^m-\E\xi^m\right|
=O\left(\frac{m\log m}{n}\right),
\qquad m\ge2.
\]
Lemma~\ref{lem:moment-comparison} below removes the logarithmic
loss and makes the constant explicit, giving \(5m/n\) for every
\(m\ge1\).  Its proof uses the $D$-companion matrix method rather than Tao's potential-theoretic argument.

\begin{lem}
\label{lem:moment-comparison}
For every integer \(m\ge1\),
\begin{equation}
 \left|\E\lambda^m-\E\xi^m\right|\le\frac{5m}{n}.
 \label{eq:moment-comparison}
\end{equation}
\end{lem}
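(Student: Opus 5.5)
The plan is to express both moments as traces of powers of explicit $n\times n$ matrices and compare them with a telescoping identity, using Lemma~\ref{lem:zpprimez} and Remark~\ref{rem:zpprimez}. Let $D=\operatorname{diag}(\lambda_1,\ldots,\lambda_n)$ and $P=I_n-\tfrac1nJ_n$, the orthogonal projection onto $\mathbf 1^\perp$. By Lemma~\ref{lem:zpprimez}, the eigenvalues of $DP$ (and hence of $PD$, since $AB$ and $BA$ have the same spectrum) are, with multiplicity, $0,\xi_1,\ldots,\xi_{n-1}$, because these are the zeros of $zf'(z)$. Therefore, for every $m\ge1$,
\[
 \operatorname{tr}\bigl((PD)^m\bigr)=\sum_{j=1}^{n-1}\xi_j^m=(n-1)\E\xi^m,
 \qquad
 \operatorname{tr}(D^m)=n\E\lambda^m .
\]

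Next, apply the telescoping identity
\[
 D^m-(PD)^m=\sum_{k=0}^{m-1}(PD)^k\,(D-PD)\,D^{m-1-k},
\]
which is checked by expanding and cancelling adjacent terms. The middle factor is $D-PD=(I-P)D=\tfrac1n J_nD$. It has rank at most one, and its operator norm is at most $\|I-P\|\,\|D\|\le1$, since $|\lambda_j|\le1$ gives $\|D\|\le1$. Moreover $\|PD\|\le\|P\|\,\|D\|\le1$. Hence each summand $X_k$ has rank at most one and operator norm at most one. For a matrix of rank at most one, $|\operatorname{tr}X|\le\|X\|_{S_1}=\|X\|$, so $|\operatorname{tr}X_k|\le1$, and summing over $k$ gives
\[
 \bigl|n\E\lambda^m-(n-1)\E\xi^m\bigr|\le m .
\]

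Finally, write $(n-1)(\E\lambda^m-\E\xi^m)=\bigl(n\E\lambda^m-(n-1)\E\xi^m\bigr)-\E\lambda^m$. Since $|\E\lambda^m|\le1$, this gives
\[
 |\E\lambda^m-\E\xi^m|\le\frac{m+1}{n-1}\le\frac{2m}{n-1}\le\frac{4m}{n}
\]
for $n\ge2$, which is stronger than \eqref{eq:moment-comparison}.

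The step that needs the most care is the trace estimate for the summands. A naive expansion of $(D-\tfrac1nJ_nD)^m$ over all words in $D$ and the rank-one perturbation produces $2^m$ terms and loses exponentially in $m$. The telescoping form avoids this: it isolates exactly one rank-one factor per summand, so there are only $m$ summands, each of trace at most one. The only other point to check is the contraction bound $\|PD\|\le1$, which holds because $P$ is an orthogonal projection and all zeros lie in $\overline{\D}$.
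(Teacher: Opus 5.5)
Your proof is correct and follows essentially the same route as the paper: the Komarova--Rivin matrix $D(I_n-\frac1nJ_n)$, the telescoping identity for the difference of $m$-th powers, and the bound $|\operatorname{tr}A|\le\operatorname{rank}(A)\,\|A\|$. The only difference is that you compare $D$ with $PD$ rather than with the compression $PDP$. This makes the perturbation $(I_n-P)D=\frac1nJ_nD$ rank one with norm at most $1$, instead of rank two with norm at most $2$, and sharpens the paper's $(4m+1)/n$ to your $(m+1)/(n-1)\le 4m/n$.
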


\begin{proof}
Let \(D=\operatorname{diag}(\lambda_1,\ldots,\lambda_n)\) and \(P=I_n-\frac{1}{n}J_n\).  By Lemma \ref{lem:zpprimez} and Remark \ref{rem:zpprimez},
$PDP$ has characteristic polynomial \(zf'(z)/n\).  
Put \(Q=I_n-P=\frac{1}{n}J_n\).  Since \(Q\) has rank one and \(P,Q\)
are orthogonal projections,
\[
D-PDP=QD+PDQ.
\]
Consequently,
\[
\operatorname{rank}(D-PDP)\le
\operatorname{rank}(QD)+\operatorname{rank}(PDQ)\le2.
\]
Moreover, $\|D\|=\max_j|\lambda_j|\le 1$, where $\|\cdot\|$ denotes the spectral norm. Therefore
\[
\|D-PDP\|
\le\|QD\|+\|PDQ\|
\le2\|D\|\le2.
\]
The telescoping identity for matrix powers 
\[
 D^m-(PDP)^m
 =\sum_{k=0}^{m-1}D^{m-1-k}(D-PDP)(PDP)^k
\]
and \(|\operatorname{tr} A|\le\operatorname{rank}(A)\|A\|\) give
\[
 \left|\operatorname{tr}D^m-\operatorname{tr}(PDP)^m\right|\le4m.
\]
Since
\[
 \operatorname{tr}\bigl((PDP)^m\bigr)=\sum_{j=1}^{n-1}\xi_j^m
 \qquad\text{and}\qquad
 \left|\operatorname{tr}\bigl((PDP)^m\bigr)\right|\le n-1,
\]
we obtain
\[
\begin{aligned}
 \left|\E\lambda^m-\E\xi^m\right|&\le
 \frac1n\left|
 \operatorname{tr}(D^m)
 -\operatorname{tr}\bigl((PDP)^m\bigr)\right|+
 \left|\frac1n-\frac1{n-1}\right|
 \left|\operatorname{tr}\bigl((PDP)^m\bigr)\right|\\
 &\le\frac{4m+1}{n}\le\frac{5m}{n}.
\end{aligned}
\]
This proves \eqref{eq:moment-comparison}.
\end{proof}

The preceding lemmas will be invoked in both endpoint regimes.  We turn
first to the origin, where a direct winding-number argument is available.

\section{An effective argument near the origin}
\label{sec:origin}

Throughout this section, \(f\) is monic of degree \(n\), all of its zeros
lie in \(\overline{\D}\), and \(a\in(0,1)\) is a zero of \(f\).  We
argue by contradiction and assume that
\begin{equation}
 f'(\zeta)\ne0\qquad\text{whenever}\qquad|\zeta-a|\le1.
 \label{eq:org-contrary}
\end{equation}
The notation \(\lambda,\xi,U_X,s_X\) is as in
Section~\ref{sec:preliminaries}.  Thus
\begin{equation}
 s_\lambda(z)=\frac{f'(z)}{nf(z)},
 \label{eq:org-stieltjes}
\end{equation}
and Lemma~\ref{lem:counterexample-support} places \(\xi\) in the lune
\begin{equation*}
 \mathcal L_a
 :=\{w\in\overline{\D}:|w-a|>1\}
\end{equation*}
almost surely.

Tao~\cite[Theorem~3.1]{Tao22} proved that there is no sequence of
counterexamples of degrees \(n_k\to\infty\) whose distinguished zeros
\(a_k\) satisfy
\[
a_k=o\left(\frac1{\log n_k}\right).
\]
That result is
asymptotic: its proof passes to a limiting root measure and selects an
annulus from the zero set of the limiting Stieltjes transform, and
therefore does not produce a numerical degree threshold.  

The main theorem of this section is an effective specialization of Tao's result.  It replaces the
asymptotic hypothesis by the finite condition \(0<a\le n^{-1/8}\) and
supplies an explicit threshold \(n\ge\Norg\).  Since
$
n^{-1/8}=o\left(\frac1{\log n}\right),
$
this does not enlarge Tao's asymptotic range; the new content is the
uniform finite-degree argument with completely explicit constants.  In
particular, the nonconstructive limiting annulus is replaced below by a
quantitative concentration estimate and a fixed winding-number
dichotomy.

\begin{thm}
\label{thm:org-effective}
Set
\begin{equation}
 \Norg:=
 \bigl(97644800000000000000000\bigr)^{16}.
 \label{eq:org-threshold}
\end{equation}
If \(n\ge\Norg\) and \(0<a\le n^{-1/8}\), then \(f'\) has a zero in
\(\overline{D(a,1)}\).
\end{thm}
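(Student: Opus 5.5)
We argue by contradiction under \eqref{eq:org-contrary}, following the intro sketch: compare balayages of \(\lambda\) and \(\xi\), extract concentration of zeros near the left semicircle \(C=\{e^{i\theta}:\pi/2\le\theta\le3\pi/2\}\), and conclude by a winding-number argument on a selected circle.

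\emph{Step 1 (critical points in a thin lune).} By Lemma~\ref{lem:counterexample-support}, \(\xi\in\mathcal L_a\) almost surely. Since \(a\le n^{-1/8}\), every point of \(\mathcal L_a\) satisfies \(|w|\ge 1-a\) and \(\operatorname{Re} w\le a/2\). Hence \(\xi\) lies within distance \(O(\sqrt a)\) of \(C\). This will be our reference set.

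\emph{Step 2 (balayage comparison, then Poisson concentration of zeros).} For \(|z|=R>1\), expand \(U_\lambda(z)-U_\xi(z)=\operatorname{Re}\sum_{m\ge1}(\E\lambda^m-\E\xi^m)/(mz^m)\). Lemma~\ref{lem:moment-comparison} bounds each coefficient by \(5/n\), so the difference is at most \(5/(n(R-1))\) uniformly on \(|z|=R\). Apply this at two radii \(R_1<R_2\) close to \(1\), say \(R_j=1+n^{-c}\). Then the Fourier coefficients of order \(\le M\) of the balayages of \(\lambda\) and \(\xi\) onto the unit circle agree up to \(O(R^M/n)\). Next test against a nonnegative Poisson-type kernel \(P\) that vanishes on \(C\) and is \(\ge1\) away from an \(\epsilon\)-neighborhood of \(C\), truncated in Fourier modes in the Erd\H{o}s--Tur\'an manner. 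By Step 1 the \(\xi\)-side contributes only \(O(\sqrt a+\text{truncation error})\). Hence the \(\lambda\)-side is small too. Since the balayage density of a point mass at \(w\in\overline\D\) is bounded below, this shows that all but a proportion \(\delta=O(n^{-c'})\) of the zeros lie within \(\epsilon\) of \(C\); the point mass at \(a\) is one of the excluded zeros. The two radii are used to separate zeros deep inside the disk from those near the circle, using harmonic-measure monotonicity in \(R\).

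\emph{Step 3 (winding-number dichotomy).} Choose a circle \(\Gamma=\{|z-a|=\rho\}\) with a fixed \(\rho\), for instance \(\rho=1/2\). By pigeonholing among \(O(1/\delta)\) candidate radii, \(\Gamma\) can be taken to avoid the few stray zeros by a definite margin. Inside \(\Gamma\) there are no critical points (it sits in \(D(a,1)\)) and at least one zero, namely \(a\). The argument principle applied to \(f'/f=ns_\lambda\) on \(\Gamma\) gives \(\wind(s_\lambda\circ\Gamma,0)=\#\{\text{critical points}\}-\#\{\text{zeros}\}\le-1\) inside \(\Gamma\). On the other hand, by Step 2, \(s_\lambda\) on \(\Gamma\) equals the Cauchy transform of a measure essentially carried by \(C\), plus an error of size \(O(\delta/\text{margin})\). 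The leading term is close to \(s_\xi\), by the same comparison via \eqref{eq:potential-identity} and the moment bound. But \(s_\xi\) is holomorphic and nonvanishing enough inside the lune's complement; in fact \(\operatorname{Re}\) of a suitable rotation of \(s_\xi\) has a fixed sign on \(\Gamma\), because all \(\xi\) lie to the left of \(\Gamma\) in the region \(\operatorname{Re} w\le a/2\). Therefore \(s_\lambda\) has winding number \(0\) around \(\Gamma\), a contradiction.

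The main obstacle is Step 2 combined with the lower bound \(|s_\xi|\gtrsim c\) on \(\Gamma\). We need \(|s_\xi|\) bounded below on \(\Gamma\) by an explicit constant, and the \(\lambda\)-deviation made smaller than that constant. The contribution of the single zero at \(a\), of size \(1/(n\rho)\), is harmless, but the stray \(\delta\)-proportion must be controlled. We would first try \(\epsilon=n^{-1/16}\) and Fourier truncation \(M\approx n^{1/4}\), and track the constants to reach \(\Norg\).
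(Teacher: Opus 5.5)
\textbf{There is a genuine gap in Step~3.} Your Steps~1--2 are in the right spirit. Bounding the balayage difference directly from Lemma~\ref{lem:moment-comparison} is even a legitimate, slightly sharper substitute for Lemma~\ref{lem:org-effective-balayage}.

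The problem is your claim that some rotation of $s_\xi$ has real part of fixed sign on $\Gamma=\{|z-a|=1/2\}$. This is false. The critical points fill a neighbourhood of the whole left semicircle $C$, including points near $\pm i$ with real part near $0$. These do not lie to the left of $\Gamma$, whose leftmost point is $a-1/2$.

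Concretely, suppose the limiting law is $\nu=\frac12(\delta_i+\delta_{-i})$. Then $s_\nu(z)=z/(z^2+1)$ has a zero at $0$, inside $\Gamma$, so the winding number is $1$, not $0$.

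Worse, no fixed circle can work. Take $\nu=p\delta_{w_1}+(1-p)\delta_{w_2}$ with $w_1,w_2\in C$. Then $s_\nu$ vanishes at $pw_2+(1-p)w_1$, and this point sweeps the left half-disk. For example, $w_{1,2}=e^{i(\pi\mp\beta)}$ with $p=1/2$ and $\cos\beta=1/2-a$ puts the zero at $a-1/2\in\Gamma$. Nothing in the counterexample hypothesis excludes such configurations. So there is no explicit lower bound for the main term on a fixed $\Gamma$, and the Rouch\'e comparison cannot be closed. The same objection applies to $s_\xi=f''/((n-1)f')$, since $f''$ may vanish anywhere in $D(a,1)$.

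\textbf{The missing idea.} You do not need winding $0$, only winding $\ge0$. The main term $s_\nu$, with $\nu$ supported on $C$, is holomorphic in $\D$. Its winding around any circle in $\D$ on which it does not vanish therefore counts its zeros, so it is $\ge0$. This contradicts the value $\le-1$ given by the argument principle for $s_\lambda$.

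What you need is a circle, \emph{chosen depending on $\nu$}, on which $|s_\nu|$ exceeds an explicit constant larger than the error terms. The paper obtains this from $\alpha=\E\overline W$:
\begin{itemize}
\item Because $|1+\overline W^{\,2}|=-2\Re\overline W$ on $C$, one has $s_\nu(z)=-\alpha+z+\mathcal R_\nu(z)$ with $|\mathcal R_\nu(z)|\le2|\alpha||z|+|z|^2/(1-|z|)$.
\item If $|\alpha|\ge1/20$, the circles $|z|=r\in[1/200,1/100]$ give $|s_\nu|>0.0388$.
\item If $|\alpha|<1/20$, the circles with $r\in[1/5,21/100]$ give $|s_\nu|>0.08$.
\end{itemize}
This is Lemma~\ref{lem:org-cauchy-winding}. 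These circles are centred at $0$, contain $a$, and lie in $D(a,1)$.

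\textbf{Two secondary points.}
\begin{itemize}
\item The paper compares $s_\lambda$ directly with $(1-q)s_\nu$, where $\nu$ is the law of the nearest-point projections to $C$ of the non-exceptional zeros. Your detour through $s_\xi$ is unjustified as stated and not needed.
\item The $\delta n$ stray zeros are not ``few'', so you cannot avoid all of them by a definite margin. Instead, average $\E\bigl[\1_{G^c}/\max\{|r-|\lambda||,n^{-10}\}\bigr]$ over $r$ using Fubini and Markov. This costs a factor $\log n$, so you need $\delta\log n$ small. You must also delete tiny neighbourhoods of the zero moduli, as in Lemma~\ref{lem:org-radius-selection}.
\end{itemize}
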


We begin its proof by making the comparison of balayages quantitative.
For \(R>1\), \(w\in\overline{\D}\), and \(\theta\in\mathbb R\), put
\begin{equation*}
 P_w^R(Re^{i\theta})
 :=\frac{R^2-|w|^2}{|Re^{i\theta}-w|^2}.
\end{equation*}
The normalization is
\[
 \int_{-\pi}^{\pi}P_w^R(Re^{i\theta})\,\frac{\dd\theta}{2\pi}=1.
\]
For a random variable \(X\) taking values in \(\overline{\D}\), define
\begin{equation*}
 \Bal_R(X)(Re^{i\theta})
 :=\E P_X^R(Re^{i\theta}).
\end{equation*}
This is the density, with respect to normalized arclength measure on
\(\partial D(0,R)\), of the classical balayage of the law of \(X\)
onto \(\partial D(0,R)\); see~\cite{Gus04}.

Tao  \cite[p.~363, (2.9)]{Tao22} established the non-asymptotic
balayage estimate
\[
\left\|\Bal_R(\lambda)-\Bal_R(\xi)\right\|_
{L^\infty(\partial D(0,R))}
=
O\left(
\frac{\log(1/(R-1))}{n(R-1)^2}
\right),
\qquad 1<R\le\frac32.
\]
  The following lemma is a fully
explicit version of this estimate.  With \(R=1+t\), it gives the
constant \(72\) while preserving the same dependence on \(n\) and
\(t\).  Its proof follows Tao's balayage--Fourier argument, with all
implicit constants tracked quantitatively for later use.

\begin{lem}
\label{lem:org-effective-balayage}
Let \(0<t\le1/2\) and \(R=1+t\).  Then
\begin{equation*}
 \left\|\Bal_R(\lambda)-\Bal_R(\xi)\right\|_
 {L^\infty(\partial D(0,R))}
 \le\frac{72\log(3/t)}{nt^2}.
\end{equation*}
\end{lem}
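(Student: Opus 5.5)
The plan is to expand the Poisson kernel in a Fourier series on \(\partial D(0,R)\) and reduce the estimate directly to the moment comparison of Lemma~\ref{lem:moment-comparison}. This bypasses the potential-theoretic route, and I expect it to give a bound even stronger than the one stated.

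\textbf{Step 1: Fourier expansion of the kernel.} For \(|w|\le1<R\), write \(q=\bar w e^{i\theta}/R\), so that \(|q|\le 1/R<1\). The standard expansion is
\[
 P_w^R(Re^{i\theta})=\frac{R^2-|w|^2}{|Re^{i\theta}-w|^2}
 =1+2\operatorname{Re}\sum_{m\ge1}\Bigl(\frac{\bar w e^{i\theta}}{R}\Bigr)^m .
\]
It follows from
\[
 \frac{R^2-|w|^2}{|R-\bar w e^{i\theta}|^2}
 =\operatorname{Re}\frac{1+q}{1-q}
 =\operatorname{Re}\frac{R+\bar we^{i\theta}}{R-\bar we^{i\theta}},
\]
and the series converges absolutely and uniformly in \(w\in\overline{\D}\), with the \(m\)-th term bounded by \(R^{-m}\).

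\textbf{Step 2: Take expectations.} Taking expectations termwise, which is justified by the uniform absolute convergence, gives
\[
 \Bal_R(\lambda)(Re^{i\theta})-\Bal_R(\xi)(Re^{i\theta})
 =2\operatorname{Re}\sum_{m\ge1}R^{-m}e^{im\theta}\,
 \overline{\bigl(\E\lambda^m-\E\xi^m\bigr)} .
\]
The constant terms cancel because both laws are probability measures. This step needs only that \(\xi\in\overline{\D}\), which holds by Gauss--Lucas since all zeros of \(f\) lie in \(\overline{\D}\).

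\textbf{Step 3: Apply the moment comparison.} By Lemma~\ref{lem:moment-comparison}, \(|\E\lambda^m-\E\xi^m|\le 5m/n\) for every \(m\ge1\). Using \(\sum_{m\ge1}m x^m=x/(1-x)^2\) with \(x=1/R\), we get
\[
 \left\|\Bal_R(\lambda)-\Bal_R(\xi)\right\|_{L^\infty(\partial D(0,R))}
 \le\frac{10}{n}\sum_{m\ge1}mR^{-m}
 =\frac{10R}{n(R-1)^2}
 =\frac{10(1+t)}{nt^2}\le\frac{15}{nt^2}.
\]

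\textbf{Step 4: Compare with the stated bound.} For \(0<t\le1/2\) we have \(\log(3/t)\ge\log6>1\). Hence \(15/(nt^2)\le 72\log(3/t)/(nt^2)\), which proves the lemma.

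There is no serious obstacle in this argument. The only points needing care are the orientation and conjugation convention in the Fourier expansion of \(P_w^R\), and the interchange of expectation with summation, which follows from the geometric domination by \(R^{-m}\). The logarithmic factor in the stated bound is an artifact of Tao's potential-theoretic proof. The \(5m/n\) moment bound removes it, so the stated constant \(72\) is easily accommodated.
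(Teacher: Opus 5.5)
Your argument is correct, and it takes a genuinely different and shorter route than the paper. The paper follows Tao's potential-theoretic scheme. It passes to the intermediate circle $|z|=S=1+t/2$ and bounds $\|U_\lambda-U_\xi\|_{L^\infty(\partial D(0,S))}\le 2\log(3/t)/n$, using \eqref{eq:potential-identity} together with the elementary bounds $1/(S+1)\le|s_\lambda|\le1/(S-1)$ and $-\log(S+1)\le U_\xi\le-\log(S-1)$. It then extracts Fourier coefficients to get $|\E\lambda^m-\E\xi^m|\le 4mS^m\log(3/t)/n$, and finally sums $\sum m(S/R)^m\le 9/t^2$. Because that moment bound grows like $S^m$, the intermediate radius is unavoidable in the paper's route. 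The factor $\log(3/t)$, which comes from the size of the potentials on $\partial D(0,S)$, survives into the final constant $72\log(3/t)$.

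You instead use Lemma~\ref{lem:moment-comparison}, whose bound $5m/n$ does not depend on any radius. This lets you sum directly at radius $R$ and obtain $10(1+t)/(nt^2)\le15/(nt^2)$, with no logarithm and a smaller constant. Your individual steps all check out: the Poisson expansion, termwise expectation over finite multisets, Gauss--Lucas for $\xi\in\overline{\D}$, and $72\log6>15$.

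The paper's route buys independence from the Komarova--Rivin matrix identity and a faithful explicit version of Tao's balayage argument. Yours buys brevity and a sharper estimate, which would also clean up downstream constants. For example, the term $4608\rho^{-1}t\log(3/t)$ in Proposition~\ref{prop:org-concentration} would become the log-free $960\rho^{-1}t$. The paper itself remarks that Lemma~\ref{lem:moment-comparison} removes the logarithmic loss, but it does not exploit this here.
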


\begin{proof}
Set \(S=1+t/2\).  If \(z=Se^{i\theta}\), then
\[
 |s_\lambda(z)|\le\frac1{S-1}
\]
and
\[
 \Re\!\left(e^{i\theta}s_\lambda(Se^{i\theta})\right)
 =\E\Re\frac1{S-\lambda e^{-i\theta}}
 \ge\frac1{S+1}.
\]
Consequently,
\[
 \frac1{S+1}\le|s_\lambda(z)|\le\frac1{S-1}.
\]
Likewise,
\[
 -\log(S+1)\le U_\xi(z)\le-\log(S-1).
\]
Using \eqref{eq:potential-identity} and \(S-1=t/2\), we obtain
\begin{equation}
 \|U_\lambda-U_\xi\|_{L^\infty(\partial D(0,S))}
 \le\frac{2\log(3/t)}n.
 \label{eq:org-potential-uniform}
\end{equation}

For \(|w|\le1\), the absolutely convergent expansion
\[
 \log\frac1{|Se^{i\theta}-w|}
 =-\log S+
 \sum_{m=1}^{\infty}\frac{\Re(w^me^{-im\theta})}{mS^m}
\]
shows, on extracting the \(m\)-th Fourier coefficient in
\eqref{eq:org-potential-uniform}, that
\begin{equation*}
 \left|\E\lambda^m-\E\xi^m\right|
 \le\frac{4mS^m\log(3/t)}n
 \qquad(m\ge1).
\end{equation*}
On the other hand,
\[
 \Bal_R(X)(Re^{i\theta})
 =1+2\Re\sum_{m=1}^{\infty}
 \frac{\E X^m}{R^m}e^{-im\theta}.
\]
It follows that
\[
 \|\Bal_R(\lambda)-\Bal_R(\xi)\|_\infty
 \le\frac{8\log(3/t)}n
 \sum_{m=1}^{\infty}m\left(\frac SR\right)^m.
\]
Since
$
 1-\frac SR=\frac{t}{2(1+t)}
$
and
\[
 \sum_{m=1}^{\infty}m\left(\frac SR\right)^m
 =\frac{S/R}{(1-S/R)^2}
 \le\frac{4(1+t)^2}{t^2}
 \le\frac9{t^2},
\]
the claimed bound follows.
\end{proof}

We next quantify the differing harmonic-measure behavior of the left
semicircle and of points separated from it.  Put
\begin{equation*}
	C:=\left\{e^{i\phi}:\frac{\pi}{2}\le\phi\le\frac{3\pi}{2}\right\},
\end{equation*}
and, for \(0<\rho\le10^{-2}\), let
\begin{equation*}
	I_\rho:=
	\left[-\frac{\pi}{2}+\frac{\rho}{4},
	\frac{\pi}{2}-\frac{\rho}{4}\right].
\end{equation*}
We abbreviate
\[
\omega_R(w,I_\rho)
:=\int_{I_\rho}P_w^R(Re^{i\theta})\,\frac{\dd\theta}{2\pi}.
\]

In the proof of \cite[Theorem~1.10(i)]{Tao22}, Tao   used a harmonic-measure
separation between points bounded away from \(C\) and points in the
thin lune \(\mathcal L_a\).  More precisely, for a fixed compact set
\(K\subset\overline{\D}\setminus C\) and a suitable fixed arc \(I\),
the harmonic measure of \(I\) is uniformly bounded below for starting
points in \(K\), whereas for starting points in \(\mathcal L_a\) it is
\(O(a+R-1)\).  The following lemma
makes the dependence on the separation scale \(\rho\) explicit.  With
\(R=1+t\) and the arc \(I_\rho\), it gives the respective bounds
\[
\frac{\rho}{64}
\qquad\text{and}\qquad
\frac{250(a+t)}{\rho^2}.
\]
Thus the underlying harmonic-measure argument is the same as Tao's,
while all parameter dependence and numerical constants are made
explicit.

\begin{lem}
\label{lem:org-poisson-test} Let \(0<\rho\le10^{-2}\), and suppose that
\[
 0<t\le\frac{\rho}{4},\qquad
 0<a\le\frac{\rho}{10},\qquad R=1+t.
\]
Then the following estimates hold.
\begin{enumerate}[label=\textup{(\roman*)}]
\item If \(w\in\overline{\D}\) and \(\dist(w,C)\ge\rho\), then
\begin{equation}
 \omega_R(w,I_\rho)\ge\frac{\rho}{64}.
 \label{eq:org-poisson-lower}
\end{equation}
\item If \(w\in\mathcal L_a\), then
\begin{equation}
 \omega_R(w,I_\rho)\le\frac{250(a+t)}{\rho^2}.
 \label{eq:org-poisson-upper}
\end{equation}
\end{enumerate}
\end{lem}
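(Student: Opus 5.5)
Both parts are elementary estimates on the Poisson kernel \(P_w^R(Re^{i\theta})=(R^2-|w|^2)/|Re^{i\theta}-w|^2\) over the arc \(I_\rho\). Part (i) comes from a lower bound on the numerator together with a crude upper bound on the denominator, restricted to a sub-arc of \(I_\rho\) near the direction of \(w\). Part (ii) uses the fact that points of \(\mathcal L_a\) lie very close to the left semicircle: the numerator is then small and the denominator is bounded below on \(I_\rho\).

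\emph{Part (i).} Take \(w\in\overline{\D}\) with \(\dist(w,C)\ge\rho\). Write \(w=re^{i\psi}\) and split into two cases.

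If \(r\le 1-\rho/2\), then \(R^2-|w|^2\ge 1-r^2\ge\rho/2\), while \(|Re^{i\theta}-w|^2\le (R+1)^2\le 9/4\cdot 4=9\). Hence \(P_w^R\ge\rho/18\) on all of \(I_\rho\). Since \(I_\rho\) has length at least \(\pi-\rho/2\ge 3\), this gives \(\omega_R\ge 3\rho/(36\pi)\ge\rho/64\).

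If \(r>1-\rho/2\), then the condition \(\dist(w,C)\ge\rho\) forces the angle \(\psi\) (taken in \((-\pi,\pi]\)) to satisfy \(|\psi|\le \pi/2-c\rho\) for some absolute \(c\) of order \(1/2\). Indeed, a point within \(\rho/2\) of the circle whose angle is within \(\rho/2\)-ish of \(\pm\pi/2\) is within \(\rho\) of an endpoint \(\pm i\) of \(C\). So the arc \(J=\{|\theta-\psi|\le \rho/8\}\) lies inside \(I_\rho\), using the margin \(\rho/4\) built into \(I_\rho\). On \(J\) we have \(|Re^{i\theta}-w|\le (R-r)+|\theta-\psi|\le t+\rho/2+\rho/8\lesssim\rho\). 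The numerator satisfies \(R^2-r^2\ge R-r\). When \(r\) is near 1 this is about \(t+(1-r)\), which can be of size \(t\) and tiny, so the crude denominator bound above is not enough. I would instead use the exact harmonic-measure comparison: \(P^R_w\) integrated over an arc of angular width \(\ge c(R-r)\) centred at \(\psi\) already carries mass bounded below by an absolute constant. If \(R-r\ge\rho/8\), the direct bound \((R-r)/(R-r+\rho/8)^2\cdot\rho/(4\pi)\) gives order \(\rho\). Either way, tracking constants should yield \(\rho/64\).

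\emph{Part (ii).} Take \(w\in\mathcal L_a\), so \(|w|\le 1\) and \(|w-a|>1\). Then \(|w|\ge |w-a|-a>1-a\), giving \(R^2-|w|^2\le (1+t)^2-(1-a)^2\le 3(a+t)\). Squaring \(|w-a|^2>1\) gives \(|w|^2-2a\Re w+a^2>1\), hence \(\Re w<(a^2+|w|^2-1)/(2a)\le a/2\). Combined with \(|w|>1-a\), this places \(w\) in the closed left half-disk up to an \(O(\sqrt a)\) strip. That estimate is not good enough, so I would sharpen it. With \(w=re^{i\psi}\) and \(\cos\psi\le a/(2r)\), we get \(|\psi|\ge\pi/2-O(a)\). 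Since \(a\le\rho/10\), this gives \(|\psi|\ge \pi/2-\rho/8\). For \(\theta\in I_\rho\), the angular separation \(|\theta-\psi|\) is then at least \(\rho/8\), so \(|Re^{i\theta}-w|\ge (1-a)\cdot 2\sin(\rho/16)\ge\rho/9\). Therefore \(P^R_w\le 3(a+t)\cdot 81/\rho^2\) on \(I_\rho\), and integrating against \(d\theta/2\pi\) over an arc of length at most \(\pi\) gives at most \(122(a+t)/\rho^2\), which is below \(250(a+t)/\rho^2\).

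The main obstacle is the near-boundary subcase of (i), where \(R-r\) is comparable to \(t\) and possibly much smaller than \(\rho\). There a lower bound genuinely needs the concentration of the Poisson kernel, not a crude bound on the denominator, and I would handle it by integrating the kernel explicitly over the arc \(J\) using the closed form of \(\int P\).
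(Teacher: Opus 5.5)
Your proposal is correct and is essentially the paper's proof. In (i) you use the same split at $r=1-\rho/2$, the same argument that $\dist(w,C)\ge\rho$ lets an arc around $\arg w$ fit inside $I_\rho$, and the same concentration of the Poisson kernel at scale $s=R-r$. You only assert that concentration step, while the paper checks it directly: for $|\theta-\psi|\le\min\{s/R,\rho/4\}$ one has $|Re^{i\theta}-w|^2\le 2s^2$ and $R^2-r^2\ge sR$, so $P_w^R\ge R/(2s)$ and the mass is at least $1/(6\pi)$. Also, your arc $J$ has normalized length $\rho/(8\pi)$, not $\rho/(4\pi)$, which is harmless. In (ii) you bound the denominator through the angular separation $|\theta-\psi|\ge\rho/8$ and a chord estimate. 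The paper instead uses the shorter horizontal separation $R\cos\theta-\Re w\ge\sin(\rho/4)-a/2\ge 3\rho/20$, which does not need $|w|>1-a$. Both routes give the claim, and yours gives $122$ in place of $250$.
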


\begin{proof}
Write \(w=re^{i\phi}\).  If \(r\le1-\rho/2\), then
\[
 R^2-r^2\ge1-(1-\rho/2)^2\ge\frac{3\rho}{4},
 \qquad
 |Re^{i\theta}-w|^2<(R+r)^2<5.
\]
As \(|I_\rho|/(2\pi)>0.49\), this already proves
\eqref{eq:org-poisson-lower}.

Assume now that \(r>1-\rho/2\).  The separation from \(C\) forces
\(\phi\in(-\pi/2,\pi/2)\): otherwise \(e^{i\phi}\in C\) and
\(|w-e^{i\phi}|=1-r<\rho\).  Moreover,
\[
 \delta_\phi:=
 \min\left\{\phi+\frac{\pi}{2},\frac{\pi}{2}-\phi\right\}
 >\frac{\rho}{2};
\]
otherwise the nearer endpoint of \(C\) would be at distance at most
\((1-r)+\delta_\phi<\rho\) from \(w\).  Hence
\[
 [\phi-\rho/4,\phi+\rho/4]\subset I_\rho.
\]
Put \(s=R-r\) and
$
 \eta=\min\left\{\frac{s}{R},\frac{\rho}{4}\right\}.
$
If \(|\theta-\phi|\le\eta\), then, using
\(1-\cos u\le u^2/2\), \(r\le R\), and
\(|\theta-\phi|\le s/R\), we have
\[
 |Re^{i\theta}-re^{i\phi}|^2
 =s^2+2Rr(1-\cos(\theta-\phi))\le2s^2
\]
and \(R^2-r^2=s(R+r)\ge sR\).  Thus the Poisson kernel is at least
\(R/(2s)\).  When \(\eta=s/R\), integration gives \(1/(2\pi)\).
When \(\eta=\rho/4\), the inequality
\[
 s<R-(1-\rho/2)\le3\rho/4
\]
gives instead \(1/(6\pi)\).  Both lower bounds are larger than
\(\rho/64\).

For the upper bound, write \(w=x+iy\).  The inequalities
\(|w|\le1\) and \(|w-a|>1\) imply
\[
 x<\frac a2,\qquad
 1-|w|^2<a^2-2ax\le a^2+2a\le3a.
\]
Therefore \(R^2-|w|^2\le3(a+t)\).  For \(\theta\in I_\rho\),
\[
 R\cos\theta-x
 \ge\sin(\rho/4)-\frac a2
 \ge\frac{\rho}{5}-\frac{\rho}{20}
 =\frac{3\rho}{20}.
\]
It follows that
\[
 P_w^R(Re^{i\theta})
 \le\frac{3(a+t)}{9\rho^2/400}
 <\frac{250(a+t)}{\rho^2}.
\]
Integration proves \eqref{eq:org-poisson-upper}.
\end{proof}

The quantitative part of \cite[Theorem~1.10(i)]{Tao22} states that,
for every fixed compact set
\[
K\subset\overline{\D}\setminus C,
\]
one has
\[
\Pp(\lambda\in K)
\ll_K a+\frac{\log n}{n^{1/3}}.
\]
Thus, for each fixed \(\rho>0\), Tao's estimate already applies to
\[
K_\rho:=
\{w\in\overline{\D}:\dist(w,C)\ge\rho\}.
\]
However, the implied constant in Tao's estimate is not made explicit
as a function of \(\rho\), and therefore cannot be used directly in
the present finite-degree argument. Using the same balayage and
harmonic-measure mechanism, the preceding lemmas give the fully
explicit estimate
\[
\Pp(\lambda\in K_\rho)
\le73000\rho^{-3}
\left(a+n^{-1/3}\log(3n)\right).
\]
The point of the proposition below is therefore the explicit
\(\rho^{-3}\)-dependence and the resulting effective numerical bound
needed for the subsequent winding argument.
\begin{prop}
\label{prop:org-concentration}
Let
$
 \rho=10^{-3}, t=n^{-1/3}$ and  $R=1+t$.
Assume \(t\le\rho/4\) and \(a\le\rho/10\).  If
$
 q:=\Pp\bigl(\dist(\lambda,C)\ge\rho\bigr),
$
then
\begin{equation*}
 q\le73000\rho^{-3}
 \left(a+n^{-1/3}\log(3n)\right).
\end{equation*}
\end{prop}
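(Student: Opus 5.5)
The plan is to integrate the balayage difference against the indicator of the arc \(I_\rho\) and compare the two harmonic measures given by Lemma~\ref{lem:org-poisson-test}. By Fubini and the definition of \(\Bal_R\),
\[
\int_{I_\rho}\Bigl(\Bal_R(\lambda)-\Bal_R(\xi)\Bigr)(Re^{i\theta})\,\frac{\dd\theta}{2\pi}
=\E\,\omega_R(\lambda,I_\rho)-\E\,\omega_R(\xi,I_\rho).
\]
Since \(|I_\rho|/(2\pi)\le1/2\), Lemma~\ref{lem:org-effective-balayage} with \(t=n^{-1/3}\le\rho/4\le1/2\) bounds the absolute value of the left side by
\[
\frac{36\log(3/t)}{nt^2}=\frac{36\log(3n^{1/3})}{n^{1/3}}\le\frac{36\log(3n)}{n^{1/3}}.
\]

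Next we estimate the two expectations separately. Because \(\omega_R\ge0\), we may discard zeros with \(\dist(\lambda,C)<\rho\). Part (i) of Lemma~\ref{lem:org-poisson-test} then gives \(\E\,\omega_R(\lambda,I_\rho)\ge q\rho/64\). By Lemma~\ref{lem:counterexample-support}, \(\xi\in\mathcal L_a\) almost surely, so part (ii) gives \(\E\,\omega_R(\xi,I_\rho)\le250(a+t)/\rho^2\). The hypotheses of that lemma, namely \(\rho=10^{-3}\le10^{-2}\), \(t\le\rho/4\) and \(a\le\rho/10\), are exactly those assumed in the proposition. Combining the three estimates,
\[
\frac{q\rho}{64}\le\frac{250(a+n^{-1/3})}{\rho^2}+\frac{36\log(3n)}{n^{1/3}}.
\]

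Finally we multiply by \(64/\rho\). Using \(\rho\le1\) (so \(\rho^{-1}\le\rho^{-3}\)) and \(\log(3n)\ge1\) (so \(n^{-1/3}\le n^{-1/3}\log(3n)\)), we obtain
\[
q\le 16000\rho^{-3}a+\bigl(16000+2304\bigr)\rho^{-3}n^{-1/3}\log(3n)\le 73000\rho^{-3}\bigl(a+n^{-1/3}\log(3n)\bigr).
\]
There is no real obstacle, since the constant \(73000\) leaves a comfortable margin. The only points needing care are that the balayage bound is integrated over an arc of measure at most \(1/2\) rather than \(1\), and that the term \(t=n^{-1/3}\) arising from part (ii) is absorbed into the \(\log(3n)\) term.
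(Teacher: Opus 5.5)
Your proof is correct and follows the paper's argument: integrate the balayage difference over \(I_\rho\), bound \(\E\,\omega_R(\lambda,I_\rho)\) from below by Lemma~\ref{lem:org-poisson-test}(i), bound \(\E\,\omega_R(\xi,I_\rho)\) from above by Lemma~\ref{lem:counterexample-support} and Lemma~\ref{lem:org-poisson-test}(ii), control the difference by Lemma~\ref{lem:org-effective-balayage}, and absorb constants. The only deviation is that you use \(|I_\rho|/(2\pi)<1/2\) to halve the balayage constant to \(36\), whereas the paper keeps \(72\); your final coefficient is \(18304\) and the paper's is \(20608\), and both are below \(73000\).
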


\begin{proof}
Lemma~\ref{lem:org-poisson-test} gives
\[
 \frac{\rho q}{64}\le\E\omega_R(\lambda,I_\rho).
\]
Lemmas~\ref{lem:counterexample-support},
\ref{lem:org-poisson-test}, and \ref{lem:org-effective-balayage} give
\[
 \E\omega_R(\lambda,I_\rho)
 \le\frac{250(a+t)}{\rho^2}
 +\frac{72\log(3/t)}{nt^2}.
\]
Since \(t=n^{-1/3}\), this implies
\[
 q\le16000\rho^{-3}(a+t)
 +4608\rho^{-1}t\log(3/t).
\]
Now use \(\rho<1\), \(\log(3/t)\le\log(3n)\), and
\(\log(3n)>1\), and enlarge the coefficient to \(73000\).
\end{proof}

For the remainder of this section, fix
\[
\rho:=10^{-3},
\qquad
q:=\Pp\bigl(\dist(\lambda,C)\ge\rho\bigr).
\]
We record the numerical consequence that will be used below.  Put
\(x=n^{1/16}\).  If \(n\ge\Norg\), then
\[
 x\ge97644800000000000000000>10^{22},
 \qquad
 \log n\le x,\qquad \log(3n)\le2x.
\]
For \(a\le n^{-1/8}\), Proposition~\ref{prop:org-concentration}
therefore gives
\begin{align}
 q\log n
 &\le7.3\cdot10^{13}
 \left(n^{-1/8}+n^{-1/3}\log(3n)\right)\log n \notag\\
 &\le7.3\cdot10^{13}
 \left(x^{-1}+2x^{-10/3}\right)
 <\frac1{4400000}.
 \label{eq:org-q-log-bound}
\end{align}
In particular, \(q<1\).

The remaining analytic input is a uniform winding alternative for
measures on \(C\).

\begin{lem}
\label{lem:org-cauchy-winding}
Let \(\nu\) be a probability measure supported on \(C\), let \(W\) have
law \(\nu\), and put
\[
 \alpha=\E\overline W,\qquad
 s_\nu(z)=\E\frac1{z-W}.
\]
Then
\begin{equation}
 s_\nu(z)=-\alpha+z+\mathcal R_\nu(z),
 \label{eq:org-cauchy-expansion}
\end{equation}
where, for \(|z|<1\),
\begin{equation}
 |\mathcal R_\nu(z)|
 \le2|\alpha||z|+\frac{|z|^2}{1-|z|}.
 \label{eq:org-cauchy-remainder}
\end{equation}
If \(|\alpha|\ge1/20\), then for every
\(r\in[1/200,1/100]\),
\begin{equation*}
 \min_{|z|=r}|s_\nu(z)|>0.0388,
 \qquad
 \wind(s_\nu(\partial D(0,r)),0)=0.
\end{equation*}
If \(|\alpha|<1/20\), then for every
\(r\in[1/5,21/100]\),
\begin{equation*}
 \min_{|z|=r}|s_\nu(z)|>0.08,
 \qquad
 \wind(s_\nu(\partial D(0,r)),0)=1.
\end{equation*}
\end{lem}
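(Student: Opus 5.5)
The plan is to expand \(1/(z-W)\) in powers of \(z\) and use that \(|W|=1\), so that \(1/W=\overline W\). For \(|z|<1\) and \(|W|=1\),
\[
\frac1{z-W}=-\frac1W\cdot\frac1{1-z/W}=-\sum_{k\ge0}z^k\overline W^{\,k+1}.
\]
Taking expectations gives
\[
s_\nu(z)=-\alpha-z\,\E\overline W^{2}-\sum_{k\ge2}z^k\E\overline W^{k+1}.
\]
The term \(z\) in \eqref{eq:org-cauchy-expansion} comes from rewriting \(\E\overline W^2\). Write \(W=e^{i\phi}\) with \(\cos\phi\le0\). Then
\[
\overline W^2=e^{-2i\phi}=2\cos\phi\,e^{-i\phi}-1,
\]
so \(-\E\overline W^2=1-2\E[\cos\phi\,\overline W]\). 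This gives
\[
\mathcal R_\nu(z)=-2z\,\E[\cos\phi\,\overline W]-\sum_{k\ge2}z^k\E\overline W^{k+1}.
\]
The tail is bounded by \(\sum_{k\ge2}|z|^k=|z|^2/(1-|z|)\). For the linear term, I use that on \(C\) we have \(\Re\overline W=\cos\phi\le0\), hence
\[
\E|\cos\phi|=-\Re\alpha\le|\alpha|.
\]
Therefore \(|\E[\cos\phi\,\overline W]|\le\E|\cos\phi|\le|\alpha|\), which gives \eqref{eq:org-cauchy-remainder}. This positivity/sign observation is the only genuinely structural step: it controls the second moment by the first because the support lies in a closed half-plane.

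Next come the two winding alternatives. Both follow by comparing \(s_\nu\) on \(|z|=r\) with a simpler function via Rouché (the argument-principle form: if \(|g-h|<|h|\) on the circle, then \(g\) and \(h\) have the same winding number about \(0\) and \(|g|\ge|h|-|g-h|\)).

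In the case \(|\alpha|\ge1/20\) and \(r\le1/100\), I compare with the constant \(-\alpha\). The error is \(|z+\mathcal R_\nu(z)|\le r+2|\alpha|r+r^2/(1-r)\), and I need this to be less than \(|\alpha|\). The worst case is \(r=1/100\) with \(|\alpha|\) near \(1/20\). Since \(|\alpha|\le1\), I will split the bound as \(r+r^2/(1-r)+2r|\alpha|\). With \(r=0.01\) this is about \(0.0101+0.02|\alpha|\), so
\[
|s_\nu|\ge0.98|\alpha|-0.0101\ge0.049-0.0101=0.0389>0.0388.
\]
The constant has winding number \(0\), so \(s_\nu\) does too.

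In the case \(|\alpha|<1/20\) and \(r\in[1/5,21/100]\), I compare with \(z\). The error is
\[
|{-\alpha}+\mathcal R_\nu|\le|\alpha|(1+2r)+\frac{r^2}{1-r}<0.05\cdot1.42+\frac{0.0441}{0.79}\approx0.071+0.0558=0.1268.
\]
At \(r=0.2\) the error bound is \(0.07+0.05=0.12\), giving \(|s_\nu|>0.2-0.12=0.08\). At \(r=0.21\) it gives \(0.21-0.1268>0.08\). The error grows with \(r\) more slowly than \(r\) does, so checking the endpoints suffices, though I would verify monotonicity of \(r-0.05(1+2r)-r^2/(1-r)\) on the interval. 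The function \(z\) has winding number \(1\), hence so does \(s_\nu\).

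The main obstacle is only numerical tightness, especially the first case, where the margin is \(0.0389\) versus \(0.0388\). I must use the bound \(2|\alpha|r\) exactly as written and not overestimate \(r^2/(1-r)\).
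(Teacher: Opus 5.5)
Your proposal is correct and follows the paper's proof essentially step for step. Your identity $\overline W^{2}=2\cos\phi\,\overline W-1$ is the paper's observation $|1+\overline W^{2}|=-2\cos\phi$ on $C$, which yields $\E|1+\overline W^{2}|=-2\Re\alpha\le2|\alpha|$, and you obtain the two winding alternatives from the same comparisons with $-\alpha$ (straight-line homotopy) and with $z$ (Rouch\'e).

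Two small precision points. At $r=1/100$ one has $r+r^{2}/(1-r)=0.010101\ldots>0.0101$, so the true margin is $0.03889\ldots$ rather than $0.0389$, though still above $0.0388$. The endpoint check in your second case is justified because $r-\frac1{20}(1+2r)-\frac{r^{2}}{1-r}$ has derivative $\frac9{10}-\frac{r(2-r)}{(1-r)^{2}}>0$ on $[1/5,21/100]$, exactly as the paper verifies.
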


\begin{proof}
Since \(|W|=1\),
\[
 \frac1{z-W}
 =-\sum_{k=0}^{\infty}z^k\overline W^{\,k+1}.
\]
For \(W=e^{i\phi}\in C\),
\[
 |1+\overline W^{\,2}|=2|\cos\phi|=-2\cos\phi.
\]
Thus
\[
 \E|1+\overline W^{\,2}|
 =-2\Re\alpha\le2|\alpha|.
\]
Adding and subtracting \(z\) in the power series proves
\eqref{eq:org-cauchy-expansion} and
\eqref{eq:org-cauchy-remainder}.

If \(|\alpha|\ge1/20\) and \(r\le1/100\), then
\[
 |s_\nu(z)|
 \ge|\alpha|(1-2r)-r-\frac{r^2}{1-r}
 \ge\frac1{20}\cdot0.98-0.01-\frac{0.01^2}{0.99}
 >0.0388.
\]
The same estimate gives \(|s_\nu(z)+\alpha|<|\alpha|\), so the
straight-line homotopy to \(-\alpha\) avoids zero and has winding
number \(0\).

If \(|\alpha|<1/20\) and \(1/5\le r\le21/100\), then
\[
 |s_\nu(z)-z|
 \le\frac1{20}(1+2r)+\frac{r^2}{1-r}<r.
\]
Indeed, the function
$
 r-\frac1{20}(1+2r)-\frac{r^2}{1-r}
$
has derivative
\[
 \frac9{10}-\frac{r(2-r)}{(1-r)^2}>0
\]
on \([1/5,21/100]\), and it equals \(0.08\) at \(r=1/5\).
Thus the margin is at least \(0.08\) on the stated interval.  Rouch\'e's
theorem compares \(s_\nu\) with \(z\), giving winding number \(1\).
\end{proof}
It remains to choose a boundary circle that avoids the zeros of \(f\)
and on which the exceptional zeros contribute little.  Let
\[
G:=\{\dist(\lambda,C)<\rho\}.
\]
By \eqref{eq:org-q-log-bound}, \(q<1\), so
\(\Pp(G)=1-q>0\).  Choose a Borel nearest-point projection
\(\pi:G\to C\), so that
\(|\lambda-\pi(\lambda)|<\rho\) on \(G\), and let \(\nu\) be the
conditional law of \(\pi(\lambda)\) given \(G\).
  The exact
decomposition is
\begin{align*}
 s_\lambda(z)
 &=(1-q)s_\nu(z)+E_{\mathrm{proj}}(z)+E_{\mathrm{bad}}(z),
 \\
 E_{\mathrm{proj}}(z)
 &:=
 \E\left[\1_G\left(\frac1{z-\lambda}
                   -\frac1{z-\pi(\lambda)}\right)\right],
 \\
 E_{\mathrm{bad}}(z)
 &:=
 \E\frac{\1_{G^c}}{z-\lambda}.
\end{align*}

The radius-selection mechanism used below already appears, in
non-explicit form, in the  proof of \cite[Theorem~3.1]{Tao22}.  There Tao splits off the exceptional
zeros, truncates the singular kernel at the scale \(n^{-10}\), and
averages over an interval of radii.  The Fubini--Tonelli theorem and
Markov's inequality then show that the exceptional contribution is
small for most radii, after which small intervals around the moduli of
the zeros of \(f\) are deleted to ensure that the selected circle
contains no zero.

The following lemma applies the same selection principle to the
decomposition above and makes every estimate explicit.  The
exceptional contribution \(E_{\mathrm{bad}}\) is bounded by
\(8800q\log n\), while the additional error
\(E_{\mathrm{proj}}\), arising from projection onto \(C\), is bounded
by \(0.00161\).  The new idea is the explicit finite-degree
form and the accompanying projection estimate, rather than the
underlying Fubini--Markov radius-selection argument.

\begin{lem}
\label{lem:org-radius-selection}
Assume \(n\ge3\).  Let \(J\) be either
\[
 J_0=[1/200,1/100]
 \qquad\text{or}\qquad
 J_1=[1/5,21/100].
\]
There is \(r\in J\) such that \(f\) has no zero on
\(\partial D(0,r)\) and
\begin{equation*}
 \sup_{|z|=r}|E_{\mathrm{bad}}(z)|
 \le8800q\log n.
\end{equation*}
For this \(r\),
\begin{equation*}
 \sup_{|z|=r}|E_{\mathrm{proj}}(z)|<0.00161.
\end{equation*}
\end{lem}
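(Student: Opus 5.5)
The plan is to treat the two error terms separately. The bound on \(E_{\mathrm{bad}}\) comes from a Fubini--Markov averaging over radii, and this averaging is also what selects \(r\). The bound on \(E_{\mathrm{proj}}\) then holds for every \(r\in J\), by a pointwise estimate using only that \(G\)-zeros lie far from the small circle.

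\emph{Radius selection.} Let \(|J|\) denote the length of \(J\), so \(|J|=1/200\) for \(J_0\) and \(|J|=1/100\) for \(J_1\). Put \(M=n^{10}\) and define the truncated kernel
\[
g_r(\lambda)=\min\bigl\{\bigl||\lambda|-r\bigr|^{-1},M\bigr\}.
\]
For \(|z|=r\) we have \(|z-\lambda|\ge\bigl||\lambda|-r\bigr|\). Hence, whenever every zero satisfies \(\bigl||\lambda_j|-r\bigr|\ge n^{-10}\), the truncation is inactive and
\[
\sup_{|z|=r}|E_{\mathrm{bad}}(z)|\le\E\bigl[\1_{G^c}g_r(\lambda)\bigr].
\]
Fix \(\lambda\) and write \(u=|\lambda|-r\). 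Then
\[
\frac1{|J|}\int_J g_r(\lambda)\,dr
\le\frac1{|J|}\int_{-|J|}^{|J|}\min\{|u|^{-1},M\}\,du
=\frac{2\bigl(1+\log(M|J|)\bigr)}{|J|}.
\]
The right-hand side is at most \(400\bigl(1+10\log n-\log 200\bigr)\le4000\log n\), since \(1-\log200<0\); the case \(|J|=1/100\) is better.

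By Tonelli, the average over \(r\in J\) of \(\E[\1_{G^c}g_r(\lambda)]\) is therefore at most \(4000\,q\log n\). Markov's inequality then shows that the set of \(r\in J\) where this expectation exceeds \(8000\,q\log n\le8800\,q\log n\) has measure at most \(|J|/2\).

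The radii with \(\bigl||\lambda_j|-r\bigr|<n^{-10}\) for some \(j\) form a set of measure at most \(2n\cdot n^{-10}<|J|/2\) when \(n\ge3\). Consequently some \(r\in J\) avoids both exceptional sets. For this \(r\), no zero of \(f\) lies on \(\partial D(0,r)\), and the bound on \(E_{\mathrm{bad}}\) holds with the stated constant.

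\emph{Projection error.} This holds for every \(r\le21/100\). On \(G\), \(\dist(\lambda,C)<\rho\), so \(|\lambda|>1-\rho\) and \(|\pi(\lambda)|=1\). Hence, for \(|z|=r\),
\[
|z-\lambda|\ge1-\rho-r\ge0.789,\qquad |z-\pi(\lambda)|\ge1-r\ge0.79.
\]
The identity
\[
\frac1{z-\lambda}-\frac1{z-\pi(\lambda)}
=\frac{\lambda-\pi(\lambda)}{(z-\lambda)(z-\pi(\lambda))}
\]
then gives
\[
|E_{\mathrm{proj}}(z)|\le\Pp(G)\,\frac{\rho}{0.789\cdot0.79}
\le\frac{10^{-3}}{0.6233}<0.001605<0.00161.
\]

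\emph{Main obstacle.} The only delicate point is the bookkeeping in the first step. The logarithm of the truncated integral must absorb the \(-\log200\) and the factor \(2/|J|\) into \(4000\log n\). The Markov factor \(2\) must then stay within the claimed \(8800\). In addition, the deleted set of near-zero radii must be shown negligible compared with \(|J|/2\). I expect each of these to go through with the margins above, since \(n^{-9}\) is far smaller than \(1/400\) for \(n\ge3\).
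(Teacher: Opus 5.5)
Your proof is correct and follows the paper's argument essentially step for step: you truncate the kernel at scale $n^{-10}$, average over $r\in J$ via Tonelli and apply Markov's inequality, delete the $n^{-10}$-neighbourhoods of the moduli $|\lambda_j|$ (total length $2n^{-9}<|J|/2$), and bound $E_{\mathrm{proj}}$ pointwise by $\rho/((1-r-\rho)(1-r))$. The differences are cosmetic: keeping the $-\log 200$ term gives you $8000\,q\log n$ instead of the paper's $44q\log n/\ell\le 8800\,q\log n$, and your comparison $\int_J g_r(\lambda)\,dr\le\int_{-|J|}^{|J|}\min\{|u|^{-1},M\}\,du$ should be justified by splitting $|\lambda|-J$ into its nonnegative and negative parts, each of length at most $|J|$ with the integrand decreasing in $|u|$, since that interval need not be contained in $[-|J|,|J|]$.
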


\begin{proof}
Put \(\delta=n^{-10}\) and
\[
 X(r):=\E\frac{\1_{G^c}}
 {\max\{|r-|\lambda||,\delta\}}.
\]
For every \(s\in[0,1]\) and for either interval \(J\),
\[
 \int_J\frac{\dd r}{\max\{|r-s|,\delta\}}
 \le2+2\log(1/\delta)\le22\log n.
\]
Fubini's theorem gives
\[
 \int_JX(r)\,\dd r\le22q\log n.
\]
Writing \(\ell=|J|\ge1/200\), the set on which
\[
 X(r)>\frac{44q\log n}{\ell}
\]
has measure at most \(\ell/2\).  Delete also the intervals of radius
\(\delta\) centered at the moduli of the \(n\) zeros of \(f\).
Their total length is at most \(2n\delta=2n^{-9}<\ell/2\).  A point
\(r\) remains.  At that point \(f\) has no boundary zero and
\[
 \sup_{|z|=r}|E_{\mathrm{bad}}(z)|
 \le X(r)\le\frac{44q\log n}{\ell}\le8800q\log n.
\]

Finally, \(r\le0.21\), \(\rho=0.001\), and
\(|\lambda-\pi(\lambda)|<\rho\) on \(G\), so
\[
 |E_{\mathrm{proj}}(z)|
 \le\frac{\rho}{(1-r-\rho)(1-r)}
 \le\frac{0.001}{0.789\cdot0.79}<0.00161.
\]
\end{proof}

All ingredients are now in place to prove the endpoint theorem.

\begin{proof}[Proof of Theorem~\ref{thm:org-effective}]
Assume \eqref{eq:org-contrary}.  Set
\(\rho=10^{-3}\) and \(t=n^{-1/3}\).  The hypotheses imply
\[
 t\le\rho/4,\qquad a\le\rho/10,
\]
and \eqref{eq:org-q-log-bound} gives
\begin{equation*}
 8800q\log n\le\frac1{500}.
\end{equation*}

If \(|\alpha|\ge1/20\), use \(J_0\) in
Lemma~\ref{lem:org-radius-selection}; otherwise use \(J_1\).
On the selected circle,
\[
 |s_\lambda(z)-(1-q)s_\nu(z)|
 <0.00161+0.002=0.00361.
\]
By Lemma~\ref{lem:org-cauchy-winding},
\[
 |(1-q)s_\nu(z)|
 >
 \begin{cases}
 (1-q)0.0388,&|\alpha|\ge1/20,\\
 (1-q)0.08,&|\alpha|<1/20.
 \end{cases}.
\]
Since \(q<1/4400000\), both lower bounds exceed \(0.00361\).
The straight-line homotopy between \(s_\lambda\) and
\((1-q)s_\nu\) consequently avoids zero, and
\begin{equation}
 \wind(s_\lambda(\partial D(0,r)),0)\in\{0,1\}.
 \label{eq:org-positive-winding}
\end{equation}

The selected radius satisfies \(r\le0.21\), whereas
\(a\le n^{-1/8}<10^{-3}\).  Hence
\[
\overline{D(0,r)}\subset D(a,1),
\]
because \(r+a<0.211<1\).  By \eqref{eq:org-contrary}, \(f'\)
has no zero in \(\overline{D(0,r)}\), and therefore
\[
N_{D(0,r)}(f')=0.
\]
The construction of \(r\) also ensures that \(f\) has no zero on
\(\partial D(0,r)\), while \(a<1/200\le r\) places at least one zero
of \(f\) inside. Writing \(N_\Omega(g)\) for the number of zeros of
\(g\) in \(\Omega\), counted with multiplicity, the argument
principle applied to \eqref{eq:org-stieltjes} gives
\[
\wind(s_\lambda(\partial D(0,r)),0)
=N_{D(0,r)}(f')-N_{D(0,r)}(f)\le-1,
\]
contradicting \eqref{eq:org-positive-winding}.
\end{proof}

The argument near the origin is thus fully effective.  The boundary
endpoint requires additional geometry because the limiting
critical-point support may have finitely many macroscopic outliers.

\section{An effective argument near the unit circle}
\label{sec:circle}

Let \(n\ge\Nstar\), and suppose again that \(f\) is monic, all its zeros
lie in \(\overline{\D}\), \(f(a)=0\), and
\begin{equation}
 f'(\zeta)\ne0\qquad(|\zeta-a|\le1).
 \label{eq:circle-counter}
\end{equation}
After rotation, \(a\in(0,1)\).  Put
\begin{equation}
 d=1-a
 \qquad\text{and assume}\qquad
 8^{-n}\le d\le n^{-1/64}.
 \label{eq:circle-range}
\end{equation}
For the random variables \(\lambda\) and \(\xi\), write
\begin{equation*}
 \mu=\E\lambda=\E\xi,\qquad
 \sigma^2=\E|\xi-\mu|^2,\qquad
 V=\E|\xi|^2=\sigma^2+|\mu|^2.
\end{equation*}
By Lemma~\ref{lem:counterexample-support},
\begin{equation}
 |\xi|\le1,\qquad |a-\xi|>1
 \label{eq:circle-support}
\end{equation}
almost surely.

The first lemma supplies the quantitative small parameter used throughout
this section. The polynomial inequalities used in its proof are due to
D\'egot \cite[Lemma~1 and Theorem~4]{Deg14}.

\begin{lem}
\label{lem:circle-preliminary}
Under \eqref{eq:circle-counter} and \eqref{eq:circle-range},
\begin{equation}
 |f(0)|>\frac1{16},\qquad n<|f'(a)|\le16n.
 \label{eq:circle-f0}
\end{equation}
If
$
 L=\log|a-\xi|,
$
then
\begin{equation}
 0<L\le\log2,\qquad \E L<\frac4n,
 \label{eq:circle-log-distance}
\end{equation}
and
\begin{equation}
 V<5n^{-1/64}<10^{-3124}.
 \label{eq:circle-small-V}
\end{equation}
In particular,
\begin{equation}
 |\mu|,\sigma<10^{-1562}.
 \label{eq:circle-small-mu}
\end{equation}
\end{lem}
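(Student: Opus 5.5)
The plan is to derive everything from the identity \(f'(a)=\prod_{j\ne j_0}(a-\lambda_j)\), the factorization \(f'(z)=n\prod_k(z-\xi_k)\), and D\'egot's polynomial inequalities. The order is: the two bounds on \(f\) in \eqref{eq:circle-f0}, then the bounds on \(L\), then \(V\) via a second-moment identity.

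\textbf{Step 1: \(|f'(a)|\) and \(|f(0)|\).} Since \(f'(a)=n\prod_k(a-\xi_k)\) and \(|a-\xi_k|>1\) by \eqref{eq:circle-support}, we get \(|f'(a)|>n\) at once. For the upper bound \(|f'(a)|\le16n\) and the lower bound \(|f(0)|>1/16\), I would invoke D\'egot~\cite[Lemma~1 and Theorem~4]{Deg14}. The input is: with \(f(z)=(z-a)g(z)\), Gauss--Lucas-type estimates on the unit disk control \(|g(a)|=|f'(a)|\) against \(n\) times an explicit constant, using \(|f'(a)|>n\) together with \(a=1-d\) and \(d\ge8^{-n}\). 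The same circle of ideas bounds \(|f(0)|=|a|\,|g(0)|\) from below. The condition \(d\ge8^{-n}\) enters because \(a^{n}\ge(1-8^{-n})^n\), so the relevant powers of \(a\) stay near \(1\). I expect to simply quote D\'egot's explicit constants here, checking that they specialize to \(16\).

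\textbf{Step 2: bounds on \(L\).} Positivity \(L>0\) is \eqref{eq:circle-support}. Since \(|\xi|\le1\) and \(a\le1\), we have \(|a-\xi|\le2\), so \(L\le\log2\). For the mean, take logarithms:
\[
(n-1)\E L=\log|f'(a)|-\log n\le\log16,
\]
so \(\E L\le\log16/(n-1)<4/n\), since \(\log16\approx2.77\).

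\textbf{Step 3: the bound on \(V\).} This is the main obstacle. Set \(\xi=a+e^{L}e^{i\psi}\). Then
\[
|\xi|^2=a^2+2ae^{L}\cos\psi+e^{2L}\le1,
\]
which forces \(\cos\psi\le(1-a^2-e^{2L})/(2ae^{L})<0\). Hence \(\xi\) lies in the thin lune with \(L\) small on average. Solving for \(|\xi|^2\) in terms of \(L\) and \(d\), and using \(e^{2L}-1\le 2L\cdot 2\) on \([0,\log 2]\), I aim for a pointwise inequality of the form \(|\xi|^2\lesssim \Re\xi\cdot(\text{const})+O(L+d)\). More precisely, \(|\xi|^2\le -2a\Re\xi+(1-a^2)\) up to \(O(L)\), coming from \(|\xi-a|^2=e^{2L}\).

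Now take expectations, using \(\E\xi=\mu=\E\lambda\):
\[
V\le 2a|\mu|+2d+C\,\E L.
\]
Then bound \(|\mu|\) through \(\E\lambda\) and \(|f(0)|\). From \(\log|f(0)|=\sum\log|\lambda_j|\ge-\log16\), the zeros satisfy \(\E\log(1/|\lambda|)\le\log16/n\), so they are concentrated near the circle. Combined with the first moment comparison, this yields \(|\mu|\) small. I expect this to need D\'egot-type control of \(\E\lambda\), which follows from the exterior-potential identity \eqref{eq:potential-identity} at \(z=a\) or at \(z=0\).

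Alternatively, and more simply, the relation \(V=\sigma^2+|\mu|^2\) combined with \(V\le -2a\Re\mu+2d+O(1/n)\) gives \(|\mu|^2\le 2|\mu|+\dots\), which is not enough. So I would instead use \(\Re\mu\ge-\)(small) from \(\mu=\E\lambda\), with \(\lambda\) nearly on the circle and \(\E\log|a-\lambda|\) controlled. The remaining terms are dominated by \(d\le n^{-1/64}\), giving \(V<5n^{-1/64}\).

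\textbf{Step 4: numerics.} With \(n\ge10^{200000}\) we get \(n^{-1/64}\le10^{-3125}\), so \(5n^{-1/64}<10^{-3124}\). Finally \(|\mu|,\sigma\le\sqrt V<10^{-1562}\).
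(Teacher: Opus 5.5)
\textbf{There is a genuine gap.} Your Steps 2 and 4 are fine once \eqref{eq:circle-f0} is known. Steps 1 and 3, however, contain genuine gaps, and both come from the same missing ingredient.

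In Step 1 you never actually derive $|f'(a)|\le16n$, or equivalently $|f(0)|>1/16$. D\'egot's results do not contain a quotable constant that ``specializes to $16$''. The role you give to $d\ge8^{-n}$ is also wrong. That hypothesis gives $a\le1-8^{-n}$, hence $a^n\le(1-8^{-n})^n$, not $\ge$. In fact the lower bound on $d$ is not needed for this lemma at all; only $d\le n^{-1/64}$, i.e.\ $a>0.99$, is used.

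In Step 3 there is a sign error. From $|a-\xi|^2=e^{2L}$ with $a$ real one gets $|\xi|^2=2a\Re\xi+e^{2L}-a^2$. So you need an \emph{upper} bound $\Re\mu=O(n^{-1/64})$, not the lower bound you propose. Your suggested control of $\mu$ also fails. The bound $\E\log(1/|\lambda|)=O(1/n)$ only says the zeros are radially close to the circle. That is compatible with $|\mu|\approx1$, for instance when all zeros cluster near one boundary point. Angular equidistribution of the zeros is established only later, in Lemma~\ref{lem:circle-boundary-roots}, and its proof uses the smallness of $V$, so that route would be circular.

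The missing idea is to pair D\'egot's nontrivial lower bound
$|f(t)|\ge\frac{1-\sqrt{1+t^2-at}}{n}|f'(a)|$ for $0<t<a$ with the AM--GM bound
$|f(t)|\le(\E|t-\lambda|^2)^{n/2}\le(1+t^2-2t\Re\mu)^{n/2}$.
Together with $|f'(a)|>n$, this gives
$\Re\mu\le\frac t2+\frac{\log(1/(0.44t))}{nt}$ for $0<t\le0.1$.

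At $t=n^{-1/3}$ this gives $\Re\mu<0.01$. Since $-1\le\Re\xi<a/2$ always, more than $19/120$ of the critical points satisfy $\Re\xi\le0.2$. For these, $|x-\xi|\le e^{-0.1}|a-\xi|$ on $0\le x\le1/2$, while $|x-\xi|\le|a-\xi|$ holds for every $\xi$. Hence $|f'(x)|\le e^{-(n-1)/100}|f'(a)|$ on $[0,1/2]$. Now integrate from $x=1/2$, where D\'egot gives $|f(1/2)|>|f'(a)|/(8n)$; this yields $|f(0)|>|f'(a)|/(16n)$. Then $|f(0)|\le1$ and $|f'(a)|>n$ give both halves of \eqref{eq:circle-f0}.

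At $t=n^{-1/64}$ the same inequality gives $\Re\mu<n^{-1/64}$. Insert this into $V\le2a\Re\mu+(1-a^2)+\frac{3}{\log2}\E L$, which comes from the chord bound for $e^{2L}$ on $[0,\log2]$. Note that your $e^{2L}-1\le4L$ fails near $L=\log2$. This gives $V<5n^{-1/64}$.
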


\begin{proof}
Since
$
 f'(a)=n\prod_{j=1}^{n-1}(a-\xi_j),
$
\eqref{eq:circle-support} gives \(|f'(a)|>n\).
For \(0<t<a\), D\'egot's estimates \cite[Theorem~4, Lemma~1]{Deg14} give
\begin{align}
 |f(t)|
 &\ge\frac{1-\sqrt{1+t^2-at}}n\,|f'(a)|,
 \label{eq:circle-degot-lower}\\
 |f(t)|
 &\le(1+t^2-2t\Re\mu)^{n/2}.
 \label{eq:circle-degot-upper}
\end{align}
The range \eqref{eq:circle-range} implies \(a>0.99\).  If
\(0<t\le0.1\), then
\[
 1-\sqrt{1+t^2-at}
 =\frac{t(a-t)}{1+\sqrt{1-t(a-t)}}>0.44t.
\]
Combining \eqref{eq:circle-degot-lower} and
\eqref{eq:circle-degot-upper} with \(|f'(a)|>n\), taking
logarithms, and using \(\log(1+x)\le x\), yields
\begin{equation}
 \Re\mu\le
 B(t):=\frac t2+\frac{\log(1/(0.44t))}{nt}.
 \label{eq:circle-mean-degot}
\end{equation}
At \(t=n^{-1/3}\), the right side is less than \(0.01\).

Write \(\xi=u+iv\).  From \eqref{eq:circle-support},
\[
 u<\frac{|\xi|^2+a^2-1}{2a}\le\frac a2.
\]
Since \(u\ge-1\) and \(\E u<0.01\), we have
\[
\begin{aligned}
	\E u
	&\ge -\Pp(u\le0.2)+0.2\Pp(u>0.2)\\
	&= -\Pp(u\le0.2)
	+0.2\bigl(1-\Pp(u\le0.2)\bigr)\\
	&=0.2-1.2\Pp(u\le0.2).
\end{aligned}
\]
It follows that
\[
0.2-1.2\Pp(u\le0.2)<0.01,
\]
and therefore
\begin{equation}
	\Pp(u\le0.2)>\frac{19}{120}.
	\label{eq:circle-left-mass}
\end{equation}
For \(0\le x\le1/2\), the support condition \(u<a/2\)
gives
\[
 |a-\xi|^2-|x-\xi|^2
 =(a-x)(a+x-2u)\ge0.
\] Thus
\[
|x-\xi|\le |a-\xi|
\]
for every critical point \(\xi\).
On the event \(u\le0.2\), set
\[
F_a(x):=(a-x)(a+x-0.4)
=a^2-0.4a+0.4x-x^2.
\]
Since \(F_a\) is concave on \([0,1/2]\), its minimum on
this interval is attained at an endpoint.  Using \(a>0.99\),
we obtain
\[
F_a(0)
=a(a-0.4)
\ge0.99(0.99-0.4)
=0.5841
\]
and
\[
F_a(1/2)
=(a-0.5)(a+0.1)
\ge(0.99-0.5)(0.99+0.1)
=0.5341.
\]
Consequently,
\[
F_a(x)\ge0.5341
\qquad (0\le x\le1/2).
\]
Since \(|x-\xi|\le x+|\xi|\le3/2\), it follows that
\[
\frac{|a-\xi|^2}{|x-\xi|^2}
=1+\frac{(a-x)(a+x-2u)}{|x-\xi|^2}
\ge
1+\frac{0.5341}{(3/2)^2}.
\]
Therefore
\[
\log\frac{|a-\xi|}{|x-\xi|}
\ge
\frac12\log\left(1+\frac{0.5341}{(3/2)^2}\right)
>0.1 \qquad\text{on }\{u\le0.2\}.
\]
Using the factorization of \(f'\) and
\eqref{eq:circle-left-mass}, we obtain
\begin{equation}
 |f'(x)|\le e^{-(n-1)/100}|f'(a)|
 \qquad(0\le x\le1/2).
 \label{eq:circle-derivative-decay}
\end{equation}

At \(x=1/2\), \eqref{eq:circle-degot-lower} gives
\[
 |f(1/2)|>\frac{|f'(a)|}{8n}.
\]
Integrating \eqref{eq:circle-derivative-decay} on \([0,1/2]\), and
noting that \(8n e^{-(n-1)/100}<1\), we
find
\[
 |f(0)|
 \ge |f(1/2)|-\frac12e^{-(n-1)/100}|f'(a)|
 >\frac{|f'(a)|}{16n}.
\]
Since \(|f(0)|=\prod_j|\lambda_j|\le1\), this proves
\eqref{eq:circle-f0}.

The factorization of \(f'\) now gives
\[
 (n-1)\E L=\log\frac{|f'(a)|}{n}<\log16,
\]
and hence \eqref{eq:circle-log-distance}.  Moreover,
\[
 e^{2L}=|a-\xi|^2=a^2-2a\Re\xi+|\xi|^2,
\]
so
\begin{equation*}
 V=2a\Re\mu+\E e^{2L}-a^2.
\end{equation*}
The chord joining the endpoints of the convex function \(e^{2x}\)
on \([0,\log2]\) gives
\[
 e^{2L}\le1+\frac3{\log2}L.
\]
Consequently,
\begin{equation*}
 V\le2a\Re\mu+(1-a^2)+\frac{18}{n}.
\end{equation*}
Apply \eqref{eq:circle-mean-degot} with \(t=n^{-1/64}\).
Here
\[
 \log\frac1{0.44t}<\log(3n)<n^{1/16}
 <\frac12n^{31/32},
\]
so \(B(t)<t\).  Since \(d\le t\), it follows that
\[
 V<2t+2d+\frac{18}{n}<5n^{-1/64}.
\]
Since \(n\ge10^{200000}\), the last expression is less than
\(10^{-3124}\).  Equation \eqref{eq:circle-small-mu} follows from
\(|\mu|^2,\sigma^2\le V\).
\end{proof}

The critical points are now close to a fixed circular arc, apart from
a uniformly bounded number of exceptions.  Define
\begin{equation*}
 \mathcal A=\{z\in\C:|z|\le1,\ |1-z|=1\}.
\end{equation*}
The following lemma should be compared with
\cite[pp.~389--390]{Tao22}.
\begin{lem}
\label{lem:circle-projection}
For every zero \(\xi\) of \(f'\), there is \(y\in\mathcal A\) such
that
\begin{equation*}
 |\xi-y|\le6(d+L),
 \qquad L=\log|a-\xi|.
\end{equation*}
\end{lem}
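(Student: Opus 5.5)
We construct the point \(y\) explicitly: first push \(\xi\) radially from the centre \(1\) onto the unit circle about \(1\), then, if necessary, slide along that circle to the nearest endpoint of \(\mathcal A\). Throughout we use only the support information \eqref{eq:circle-support}, \(|\xi|\le1\) and \(|a-\xi|=e^L>1\), together with \(a=1-d\) and \(0<L\le\log2\) from Lemma~\ref{lem:circle-preliminary}.

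\emph{Radial step.} Comparing distances to \(1\) and to \(a\),
\[
 |1-\xi|\ge|a-\xi|-|1-a|=e^L-d>1-d
\]
and
\[
 |1-\xi|\le e^L+d\le1+2L+d,
\]
using \(e^L-1\le2L\) on \([0,\log2]\). Hence \(\bigl||1-\xi|-1\bigr|\le 2L+d\). Let
\[
 y_0=1+\frac{\xi-1}{|\xi-1|},
\]
so that \(|1-y_0|=1\) and \(|\xi-y_0|\le 2L+d\).

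\emph{Endpoint step.} If \(|y_0|\le1\), take \(y=y_0\) and we are done. Otherwise \(y_0\) lies on the unit circle about \(1\) but outside \(\overline{\D}\). The arc \(\mathcal A\) has endpoints \(e^{\pm i\pi/3}\), the two intersection points of \(|z|=1\) and \(|1-z|=1\). In this case we take \(y\) to be the nearer of these endpoints, and it remains to bound \(|y_0-y|\).

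The quantity available is
\[
 |y_0|\le|\xi|+|\xi-y_0|\le1+2L+d,
\]
so \(y_0\) exceeds the unit disk by at most \(2L+d\). We need a transversality estimate: for a point \(z=1+e^{i\psi}\) on the circle about \(1\) with \(1<|z|\le1+\varepsilon\), its distance to the nearer endpoint \(e^{\pm i\pi/3}\) is at most \(C\varepsilon\).

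We prove this by direct computation. Since \(|z|^2=2+2\cos\psi\), the condition \(|z|>1\) means \(\cos\psi>-1/2\). Near the endpoint, \(\psi=\pm2\pi/3\), we have \(\frac{d}{d\psi}|z|^2=-2\sin\psi\), whose modulus is \(\sqrt3\) there. The two circles meet at angle \(\pi/3\), so the excess \(|z|-1\) grows at a definite linear rate as \(z\) moves away from the endpoint. This gives arc distance at most about \(\tfrac{2}{\sqrt3}\varepsilon\cdot\)const. Far from the endpoints, for instance at \(\psi=0\) where \(z=2\), the excess is large. Since \(\varepsilon=2L+d\le2\log2+d\) need not be small, we simply split: if \(\varepsilon\) is at least a fixed constant, the trivial bound \(|y_0-y|\le2\) suffices, because \(6(d+L)\) then already exceeds the required size after adding \(|\xi-y_0|\).

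\emph{Main obstacle.} The delicate point is making the transversality constant explicit so that
\[
 |\xi-y_0|+|y_0-y|\le(2L+d)(1+C)\le6(d+L),
\]
which requires \(C\le2\) in the relevant normalization. We handle this through the exact relation
\[
 |z|^2-1=1+2\cos\psi=4\sin\!\Bigl(\tfrac{2\pi/3-\psi}{2}\Bigr)\sin\!\Bigl(\tfrac{2\pi/3+\psi}{2}\Bigr)
\]
(the same with \(\psi\) replaced by \(|\psi|\)). We bound the second factor below, and the chord \(|z-e^{i\pi/3}|=2\sin\bigl(\tfrac{2\pi/3-|\psi|}{2}\bigr)\) above. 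This yields a chord length at most roughly \(|z|^2-1\le 2\varepsilon+\varepsilon^2\). If the constant comes out slightly too large, we instead use \(|\xi|\le1\) directly, measuring the excess of \(y_0\) by its displacement from \(\xi\) rather than by the crude bound \(1+\varepsilon\).
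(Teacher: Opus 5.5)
Your proposal is correct and is essentially the paper's argument. You radially project \(\xi\) from the centre \(1\) onto \(|1-z|=1\) at cost at most \(d+2L\); if \(y_0\) leaves \(\overline{\D}\), you pass to the nearer endpoint \(e^{\pm i\pi/3}\) using \(|y_0|-1\le|\xi-y_0|\) (from \(|\xi|\le1\)) and a linear transversality bound, for which the paper uses the constant \(2\). Your product identity also closes the last step cleanly with no case split or fallback: the second sine factor is at least \(\sqrt3/2\) and \(|y_0|\le2\) gives \(|y_0|^2-1\le3(|y_0|-1)\), so the chord is at most \(\sqrt3(|y_0|-1)\) and \(|\xi-y|\le(1+\sqrt3)(d+2L)\le6(d+L)\).
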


\begin{proof}
Put \(r=|a-\xi|=e^L\) and \(R=|1-\xi|\).  Then
\begin{equation*}
 |R-1|\le|R-r|+(r-1)\le d+2L,
\end{equation*}
because \(e^L-1\le2L\) on \([0,\log2]\).  Radially project \(\xi\),
with center \(1\), to the circle \(|1-z|=1\), obtaining \(y_0\).
Then \(|\xi-y_0|\le d+2L\).

If \(|y_0|\le1\), take \(y=y_0\).  Otherwise write
\(y_0=1+e^{i\phi}\) and take the nearer endpoint of \(\mathcal A\).
The elementary identities
\[
 |y_0|=2|\cos(\phi/2)|,\qquad
 |e^{i\phi}-e^{\pm2\pi i/3}|
 =2\sin\left|\frac{\phi\mp2\pi/3}{2}\right|
\]
show directly that
\[
 \dist(y_0,\mathcal A)
 \le2(|y_0|-1)
 =2\dist(y_0,\overline{\D})
 \le2|\xi-y_0|.
\]
The triangle inequality now gives
\[
 \dist(\xi,\mathcal A)\le3(d+2L)\le6(d+L).
\]
\end{proof}

We fix, once and for all, the parameters
\begin{equation}
\begin{aligned}
 &\Delta=10^{-20},\qquad g=10^{-22},\qquad
 \tau=10^{-30},\qquad K_0=10^{32},\\
 &h=10^{-110},\qquad r_\ast=2.5\cdot10^{-172},\qquad
 d_\ast=2\cdot10^{-172},\qquad \kappa=10^{-400}.
\end{aligned}
\label{eq:circle-parameters}
\end{equation}
Let \(T\) be the multiset of critical points \(t\) for which
\(\dist(t,\mathcal A)>\tau\).  

The path construction in the following lemma is a quantitative version of
the obstacle-avoidance argument used by Tao in the proofs of
\cite[Propositions~5.3 and~5.4]{Tao22}.  In place of the compact exceptional
set arising in Tao's asymptotic argument, we work with finitely many explicit
obstacles and construct paths starting from actual zeros rather than ideal
boundary points.  The lemma also gives an explicit bound for the number of
obstacles, collects all forbidden directions into a single exceptional set
\(H\), and ensures that the resulting admissible paths remain a fixed positive
distance from every obstacle.

\begin{lem}
\label{lem:circle-obstacles}
The multiset \(T\) has fewer than \(K_0\) elements.  There is a
measurable set \(H\subset\mathbb R/(2\pi\mathbb Z)\), invariant under
translation by \(\pi\), with
\begin{equation*}
 |H|<\Delta
\end{equation*}
and the following property.  If \(\theta\notin H\) and \(z\) is
a zero of \(f\) satisfying
\begin{equation}
 |z-e^{i\theta}|\le4\kappa,
 \label{eq:circle-near-boundary-root}
\end{equation}
then \(z\) can be joined to a point \(w_0\), \(|w_0|=0.01\), by a
piecewise smooth path \(\gamma\) of length less than \(1.3\) such that,
with \(s\) denoting arclength from \(z\),
\begin{equation}
	-\frac{\dd}{\dd s}|\gamma(s)|\ge0.8
	\qquad\text{for a.e. }s\in(0,\ell(\gamma)),
	\label{eq:circle-radial-speed}
\end{equation}
and
\begin{equation}
 \dist(\gamma,\supp\xi)>d_\ast.
 \label{eq:circle-path-clearance}
\end{equation}
\end{lem}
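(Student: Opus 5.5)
The plan has three parts: first count \(T\) using the small mean \(\E L\); then build explicit paths for all angles outside a few tiny windows; finally remove the angles spoiled by the finitely many points of \(T\).

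\textbf{Counting \(T\).} Let \(t\in T\). Lemma~\ref{lem:circle-projection} gives \(\tau<\dist(t,\mathcal A)\le 6(d+L_t)\), where \(L_t=\log|a-t|\). By \eqref{eq:circle-range}, \(d\le n^{-1/64}\) is far smaller than \(\tau/12\), so \(L_t>\tau/12\). Every \(L\) is positive by \eqref{eq:circle-log-distance}. Hence
\[
\frac{\#T}{n-1}\cdot\frac{\tau}{12}\le\E L<\frac4n ,
\]
so \(\#T<48/\tau<10^{32}=K_0\).

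\textbf{A path family avoiding the \(\tau\)-neighbourhood of \(\mathcal A\).} Every critical point is within \(\tau\) of \(\mathcal A\) or lies in \(T\). So it suffices to build paths that keep distance at least \(g/2\gg\tau+d_\ast\) from \(\mathcal A\), and then deal with \(T\) separately.

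For each angle \(\theta\) I will define a path \(\Gamma_\theta\) in polar form \(r\mapsto re^{i\phi_\theta(r)}\), with \(r\) decreasing from \(1\) to \(0.01\). The radial-speed condition \eqref{eq:circle-radial-speed} holds whenever \(|r\,\phi_\theta'(r)|\le 0.75\). The length is then at most \(0.99/0.8<1.3\).

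\begin{itemize}
\item \emph{Radial case.} Take \(\phi_\theta\equiv\theta\) when \(\theta\) lies in \((-\pi/3,\pi/3)\) or in the left half-plane range \(\cos\theta<0\). The ray from the origin at angle \(\theta\) meets the circle \(|1-z|=1\) only at radius \(2\cos\theta\). In the first range this radius exceeds \(1\), so the ray stays inside \(D(1,1)\). In the second range the ray meets \(\mathcal A\) only at \(0\).
\item \emph{Curved case.} For \(\theta\in(\pi/3,\pi/2)\) the ray would cross \(\mathcal A\). Instead I take \(\phi_\theta(r)=\max\{\theta,\arccos(r/2)+g\}\), which tracks \(\mathcal A\) from outside at angular offset \(g\). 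Since \(r\,\tfrac{d}{dr}\arccos(r/2)=-r/\sqrt{4-r^2}\), we get \(|r\phi_\theta'|\le 0.58<0.75\). The range \((-\pi/2,-\pi/3)\) is treated symmetrically.
\end{itemize}

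Elementary planar geometry, which I will check with explicit margins, should then show \(\dist(\Gamma_\theta,\mathcal A)\ge g/4\). This requires excluding windows of width a fixed multiple of \(g\) around the angles \(\pm\pi/3\), where \(\Gamma_\theta\) starts near an endpoint of \(\mathcal A\), and around \(\pm\pi/2\), where at radius \(0.01\) the distance to \(\mathcal A\) is about \(0.01|\cos\theta|\). I also add the \(\pi\)-translates of these windows, so that \(H\) is invariant under translation by \(\pi\). The total measure is \(O(g)\), far below \(\Delta\).

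For a zero \(z\) satisfying \eqref{eq:circle-near-boundary-root}, the actual path \(\gamma\) first runs straight from \(z\) to the nearby point of \(\Gamma_\theta\), over a distance at most \(O(\kappa)\). This segment is directed inward, and \(\kappa\) is negligible compared with all other scales. After that, \(\gamma\) follows \(\Gamma_\theta\).

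\textbf{Removing the obstacles in \(T\) (main obstacle).} For each \(t\in T\), collect the angles \(\theta\) for which \(\Gamma_\theta\) passes within \(2d_\ast\) of \(t\). The paths run only over \(r\ge0.01\), and for different \(\theta\) they are angular shifts, or monotone-in-\(\theta\) deformations, of one another. So I expect this set of bad angles to be an interval of length \(O(d_\ast/0.01)\). The delicate point is making this uniform for the curved family, where the paths merge toward the tangent curve. Because \(\theta\mapsto\phi_\theta(r)\) is nondecreasing with slope \(0\) or \(1\), I will pass to the parameter \(\phi\) itself and bound the relevant measure by the angular width of the \(2d_\ast\)-disk about \(t\), seen from radius \(\ge0.01\); that width is at most \(\le 400d_\ast\).

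Summing over at most \(K_0\) obstacles, with \(\pi\)-translates included, gives measure at most about \(10^{32}\cdot10^{3}\cdot10^{-171}\), which is negligible. Hence \(|H|<\Delta\). Finally, the clearance \(2d_\ast\) obtained for \(\Gamma_\theta\) degrades by at most \(O(\kappa)\) along the initial segment, which yields \eqref{eq:circle-path-clearance}.
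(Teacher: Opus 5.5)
\textbf{Verdict: there is a genuine gap, in the obstacle-removal step for your curved family.} Your count of $T$ is fine, and so are your radial paths for $2\cos\theta>1$ and for $\cos\theta<0$; these are the paper's cases $u_z\ge 1+g/3$ and $u_z\le 0$.

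The trouble is that your curved paths are not separated from one another. For every $\theta\in(\pi/3+cg,\pi/2)$ and every $r<2\cos(\theta-g)$ you have $\phi_\theta(r)=\arccos(r/2)+g$. So all of them run along one common tail, the curve $\mathcal C_g=\{re^{i(\arccos(r/2)+g)}\}$, which is $\mathcal A$ rotated by $g$.

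Nothing prevents a critical point $t$ from lying within $2d_\ast$ of $\mathcal C_g$. Such a $t$ belongs to $T$, because $\mathcal C_g$ is at distance of order $rg\ge 10^{-24}\gg\tau$ from $\mathcal A$. Then every $\theta$ in $(\pi/3+cg,\ \arccos(|t|/2)+g)$ is spoiled. For $|t|=1/2$ this is an interval of length about $0.27$, not $O(d_\ast)$, so $|H|<\Delta$ fails.

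Passing to the parameter $\phi$ does not rescue this. The angular width of the shadow of $D(t,2d_\ast)$ controls the $\phi$-measure, not the $\theta$-measure. On the slope-$0$ part, $\theta\mapsto\phi_\theta(r)$ collapses a whole interval of $\theta$ onto the single value $\arccos(r/2)+g$. Hence the $\theta$-preimage of a tiny $\phi$-window can be macroscopic, which is exactly the merging you flagged as delicate.

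\textbf{What is needed} is a family of paths that are pairwise disjoint on $\{|w|\ge 0.01\}$, quantitatively so, and that depend non-degenerately on $\theta$.
\begin{itemize}
\item The paper uses the circle through $z$ and $0$ centred at $1/u_z$, i.e.\ the level set $F_{u_z}(w)=2\Re w-u_z|w|^2=0$ with $u_z=2\Re z/|z|^2\approx 2\cos\theta$.
\item These circles meet only at $0$, and $|F_{u_z}(t)|=|u_z-u_t|\,|t|^2$.
\item Deleting the angles with $|2\cos\theta-u_t|\le 2h$ costs measure $<40\sqrt h$ per obstacle. It forces $|u_z-u_t|>h$, hence $\dist(t,\gamma)>h\tau^2/4>d_\ast$.
\end{itemize}
Alternatively, rotating $\mathcal A$ by $\theta-\pi/3$, instead of by the fixed angle $g$, gives curves that differ in angle by exactly $\theta_1-\theta_2$ at each radius. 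With that family your shadow argument goes through.

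\textbf{Two smaller points.}
\begin{itemize}
\item Near $r=0.01$ your tail is only about $0.01g$ from $\mathcal A$, not $g/4$. This is harmless, since $0.01g\gg\tau+d_\ast$.
\item The initial segment from $z$ must go to a point of $\Gamma_\theta$ at radius about $|z|-C\kappa$ with $C$ large, not to the nearest point of $\Gamma_\theta$. Otherwise it can be nearly tangential and violate $-\frac{d}{ds}|\gamma(s)|\ge 0.8$.
\end{itemize}
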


\begin{proof}
If \(t\in T\), Lemma~\ref{lem:circle-projection} gives
\(\tau<6(d+L_t)\).  Since \(d<\tau/10\), this implies
\(L_t>\tau/20\).  On the other hand,
\[
 \sum_{j=1}^{n-1}L_j
 =\log\frac{|f'(a)|}{n}<\log16.
\]
Consequently,
\begin{equation*}
 \#T<\frac{20\log16}{\tau}<K_0.
\end{equation*}

For \(u\in\mathbb R\), put
$
 F_u(w)=2\Re w-u|w|^2.
$
Since \(0\in\mathcal A\), every \(t\in T\) is nonzero and
\(|t|>\tau\); set \(u_t=2\Re t/|t|^2\).  Delete every angle satisfying
one of
\begin{align}
 |2\cos\theta-1|&\le2g, \label{eq:circle-delete-transition}\\
 |2\cos\theta-u_t|&\le2h
 &&(t\in T), \label{eq:circle-delete-circle}\\
 |\sin(\theta-\arg t)|
 &\le\frac{2r_\ast}{|t|}
 &&(t\in T). \label{eq:circle-delete-radial}
\end{align}
Enlarge this set by a \(100\kappa\)-neighborhood and take its union
with its translate by \(\pi\).  For \(u,\alpha\in\mathbb R\) and \(r\ge\tau\), the following
one-dimensional estimates hold:
\begin{align*}
	|\{|2\cos\theta-1|\le2g\}|&<20g,\\
	|\{|2\cos\theta-u|\le2h\}|&<40\sqrt h,\\
	|\{|\sin(\theta-\alpha)|\le2r_\ast/r\}|&<32r_\ast/r.
\end{align*}
For completeness, the first set is contained in two intervals
on which \(|\sin\theta|>0.8\).  The mean-value theorem therefore gives
\[
|\{\theta:|2\cos\theta-1|\le2g\}|<20g.
\]
For the second estimate, remove the \(\sqrt h\)-neighborhoods of the
two turning points \(0\) and \(\pi\).  Their total length is at most
\(4\sqrt h\).  On the complement,
\[
|(2\cos\theta)'|
=2|\sin\theta|
\ge\sqrt h.
\]
Applying the mean-value theorem on the two monotonicity intervals of
\(\cos\theta\) gives
\[
|\{\theta:|2\cos\theta-u|\le2h\}|<40\sqrt h.
\]
Finally, if \(0<\eta<1/4\), then
\[
|\{\theta:|\sin(\theta-\alpha)|\le\eta\}|
=4\arcsin\eta\le8\eta.
\]
Taking \(\eta=2r_\ast/r\), which lies in \((0,1/4)\) because
\(r\ge\tau\), yields
\[
|\{\theta:|\sin(\theta-\alpha)|\le2r_\ast/r\}|
<32r_\ast/r.
\]

After the \(\pi\)-symmetrization and thickening, these estimates give
the following explicit bound.  Before symmetrization the deleted set has at most
\(4+8K_0\) interval components; enlarging each component by
\(100\kappa\) adds at most
\(200\kappa(4+8K_0)<2000(K_0+1)\kappa\) to its length.  Therefore
\[
 |H|<
 2\left(20g+40K_0\sqrt h+\frac{32K_0r_\ast}{\tau}
 +2000(K_0+1)\kappa\right)
 <6\cdot10^{-21}<\Delta.
\]

Let \(\theta\notin H\) and let \(z\) satisfy
\eqref{eq:circle-near-boundary-root}.  Put
$
 u_z=2\Re z/|z|^2.
$
The map \(w\mapsto2\Re w/|w|^2\) is \(10\)-Lipschitz on
\(\{|w|\ge0.99\}\); hence
\begin{equation}
 |u_z-2\cos\theta|<50\kappa.
 \label{eq:circle-u-close}
\end{equation}
The deletion \eqref{eq:circle-delete-transition} implies that precisely
one of the following alternatives applies:
\[
 u_z\le0,\qquad
 u_z\ge1+\frac g3,\qquad
 0<u_z\le1-\frac g3.
\]
In the first two alternatives, take the radial segment from \(z\) to
the circle \(|w|=0.01\).  In the third alternative,
\(0<u_z\le1-g/3\) and \(F_{u_z}(z)=0\). In polar coordinates
\(w=re^{i\phi}\), the nonzero part of the level set
\(F_{u_z}(w)=0\) is given by
\[
r=\frac{2\cos\phi}{u_z},
\qquad |\phi|<\frac{\pi}{2}.
\]
Starting at \(z\), move toward \(+\pi/2\) when \(\arg z\ge0\), and
toward \(-\pi/2\) when \(\arg z<0\), until \(r=0.01\).
Along this branch \(r\) decreases monotonically, so the whole arc lies
in \(\overline{\D}\). Since \(F_{u_z}(w)=0\) is the circle centered at
\(1/u_z\) with radius \(1/u_z\), differentiation with respect to
arclength gives
\[
-\frac{\dd r}{\dd s}
=\sqrt{1-\frac{u_z^2r^2}{4}}
\ge\frac{\sqrt3}{2},
\]
where we used \(u_zr\le1\). Thus each chosen path has length less than
\[
\frac{1-0.01}{\sqrt3/2}<1.3
\]
and satisfies \eqref{eq:circle-radial-speed}.

On a circular path,
\[
 |F_1(w)|=|1-u_z||w|^2\ge\frac g3\,10^{-4}.
\]
If the radial path has \(u_z\le0\), then
\[
 |F_1(re^{i\arg z})|
 =r(r-u_z|z|)\ge r^2.
\]
If it has \(u_z\ge1+g/3\), then
\[
 F_1(re^{i\arg z})
 =r(u_z|z|-r)\ge\frac g3\,r|z|
\]
for \(0.01\le r\le|z|\).  Thus the same lower bound holds on every
path.  Since \(F_1=0\) on \(\mathcal A\) and
\(|\nabla F_1|\le4\) in \(\overline{\D}\),
\begin{equation}
 \dist(w,\mathcal A)>5\cdot10^{-28}
 \qquad(w\in\gamma).
 \label{eq:circle-A-clearance}
\end{equation}

For a circular path and \(t\in T\),
\eqref{eq:circle-delete-circle} and \eqref{eq:circle-u-close} give
\[
 |F_{u_z}(t)|
 =|u_z-u_t||t|^2>h\tau^2=10^{-170}.
\]
Here \(|\nabla F_{u_z}|\le4\) on the unit disk, so
\[
 \dist(t,\gamma)>2.5\cdot10^{-171}>d_\ast.
\]
For a radial path, the \(100\kappa\)-thickening and
\eqref{eq:circle-delete-radial}, together with
\(|\arg z-\theta|<5\kappa\), give
\[
\begin{aligned}
 |\sin(\arg z-\arg t)|
 &\ge |\sin(\theta-\arg t)|-|\arg z-\theta|\\
 &>\frac{2r_\ast}{|t|}-5\kappa
  >\frac{r_\ast}{|t|}.
\end{aligned}
\]
Thus the distance from \(t\) to the full line containing the radial
segment, and hence to the segment itself, is greater than
\(r_\ast>d_\ast\).  Finally, if
\(t\in\supp\xi\setminus T\) and \(w\in\gamma\), then
\(\dist(t,\mathcal A)\le\tau\), and hence
\[
|w-t|
\ge \dist(w,\mathcal A)-\dist(t,\mathcal A)
>5\cdot10^{-28}-\tau>d_\ast
\]
by \eqref{eq:circle-A-clearance}. This proves
\eqref{eq:circle-path-clearance}.
\end{proof}

We next show that every boundary direction has a nearby zero of \(f\);
for directions outside \(H\), Lemma~\ref{lem:circle-obstacles} then
applies to that zero. The discrepancy estimate used here is
the Fourier form of the Erd\H{o}s--Tur\'an inequality
\cite[Corollary~1.1]{Mon94}.

\begin{lem}
\label{lem:circle-boundary-roots}
For every \(\theta\in\mathbb R\), there is a zero \(z\) of \(f\) such
that
\begin{equation}
 |z-e^{i\theta}|<4\kappa.
 \label{eq:circle-root-every-angle}
\end{equation}
\end{lem}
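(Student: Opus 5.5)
Fix $\theta$. The plan is to show that the zero measure of $f$ is, at fine scales, very close to the uniform measure on $\partial\D$: most zeros are extremely close to the unit circle, and their angles are equidistributed. A gap of angular width $\gtrsim\kappa$ around $\theta$ will then contradict an Erd\H{o}s--Tur\'an discrepancy bound.

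\textbf{Radial concentration.} By Lemma~\ref{lem:circle-preliminary}, $|f(0)|=\prod_j|\lambda_j|>1/16$. Hence
\[
\E\log\frac1{|\lambda|}<\frac{\log16}{n},
\]
and every term is nonnegative. Markov's inequality then gives
\[
\Pp\bigl(1-|\lambda|\ge\kappa\bigr)\le\frac{\log16}{n\kappa},
\]
which is astronomically small because $n\ge10^{200000}$ and $\kappa=10^{-400}$. So all but a negligible proportion of zeros lie in the annulus $1-\kappa<|\lambda|\le1$.

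\textbf{Fourier coefficients.} For $1\le m\le M$ I would bound $|\E\lambda^m|$. Lemma~\ref{lem:moment-comparison} gives
\[
\bigl|\E\lambda^m-\E\xi^m\bigr|\le\frac{5m}{n}.
\]
Since $|\xi|\le1$, we have $|\E\xi^m|\le\E|\xi|^m\le\E|\xi|^2=V<5n^{-1/64}$ for $m\ge2$, while $|\E\xi|=|\mu|\le\sqrt V$ for $m=1$. Next, replacing $\lambda$ by its angle $\lambda/|\lambda|$ costs at most $\E\bigl|1-|\lambda|^m\bigr|\le m\,\E(1-|\lambda|)\le m\log16/n$. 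Together these show
\[
\Bigl|\E\Bigl(\frac{\lambda}{|\lambda|}\Bigr)^m\Bigr|\le\sqrt{5}\,n^{-1/128}+\frac{8m}{n}.
\]
(Zeros at $0$ are impossible because $|f(0)|>0$.)

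\textbf{Erd\H{o}s--Tur\'an step.} Apply the Fourier form of Erd\H{o}s--Tur\'an \cite[Corollary~1.1]{Mon94} to the angular distribution of the zeros on the arc $I=\{\phi:|\phi-\theta|<\kappa\}$. For any $M$, the discrepancy is at most
\[
\frac{C}{M+1}+C\sum_{m\le M}\frac1m\Bigl|\E\Bigl(\frac{\lambda}{|\lambda|}\Bigr)^m\Bigr|.
\]
Take $M=\lceil 10^{3}/\kappa\rceil$. The first term is then $\le10^{-3}\kappa$. The sum is at most roughly $(\log M)\,n^{-1/128}+8M/n$, which is far below $10^{-3}\kappa$ because $n^{-1/128}\le10^{-1562}$. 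Therefore the proportion of zeros with angle in $I$ is at least $2\kappa/(2\pi)-2\cdot10^{-3}\kappa>0$.

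The proportion with $1-|\lambda|\ge\kappa$ is smaller still. So some zero $z$ has $|\arg z-\theta|<\kappa$ and $1-|z|<\kappa$. This gives
\[
|z-e^{i\theta}|\le(1-|z|)+|\arg z-\theta|<2\kappa<4\kappa.
\]

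\textbf{Main obstacle.} The delicate point is the Fourier estimate for the angular parts. The moment comparison loses $5m/n$, and the bound on $\E\xi^m$ relies only on $V$, so every error must stay below $\kappa$ uniformly for $m\le M\approx\kappa^{-1}$. This works only because $n^{-1/128}$ is vastly smaller than $\kappa^{2}$, so the main thing to check is that the numerical margins close. If $\E\xi$ (a term of size $\sqrt V$) proves troublesome, I would note that its contribution $\sqrt V\approx10^{-1562}$ is still negligible compared with $\kappa$.
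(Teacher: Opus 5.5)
Your proposal is correct and follows essentially the same route as the paper. Both arguments use the radial-defect bound \(\sum_j(1-|\lambda_j|)<\log 16\) coming from \(|f(0)|>1/16\), and the Fourier bound \(|\widehat\nu(m)|\le\sqrt V+8m/n\) (with \(V\) in place of \(\sqrt V\) for \(m\ge2\)) obtained from Lemma~\ref{lem:moment-comparison}. Both then apply the Erd\H{o}s--Tur\'an inequality to the angular measure, and count to show that the zero arguments in a short arc around \(\theta\) outnumber the few zeros with radial defect above \(\kappa\).

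The remaining differences are cosmetic. You take the cutoff \(M\approx10^{3}/\kappa\) and an arc of half-width \(\kappa\), where the paper takes \(M=10^{410}\) and an arc of length \(4\kappa\). You also dropped the absolute constant on the \(1/(M+1)\) term of Erd\H{o}s--Tur\'an; the margin absorbs it easily.
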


\begin{proof}
By \eqref{eq:circle-f0},
\begin{equation}
 \sum_{j=1}^n(1-|\lambda_j|)
 \le\sum_{j=1}^n\log\frac1{|\lambda_j|}
 =\log\frac1{|f(0)|}<\log16<3.
 \label{eq:circle-radial-defect}
\end{equation}
In particular, no \(\lambda_j\) is zero.  Put
\(\omega_j=\lambda_j/|\lambda_j|\), and let \(\nu\) be their empirical
angular measure. We use the convention
\[
\widehat\nu(m)
:=\int_{\partial\D}w^m\,\dd\nu(w)
=\frac1n\sum_{j=1}^n\omega_j^m,
\qquad m\ge1.
\]  Lemma~\ref{lem:moment-comparison} and
\eqref{eq:circle-radial-defect} give
\begin{equation*}
 |\widehat\nu(m)-\E\xi^m|\le\frac{8m}{n}
 \qquad(m\ge1).
\end{equation*}
Moreover,
\[
 |\E\xi|\le\sqrt V,\qquad
 |\E\xi^m|\le V\quad(m\ge2).
\]
In the normalization used here, the Erd\H{o}s--Tur\'an inequality \cite[Corollary~1.1]{Mon94} is
\[
 D(\nu)\le10\left\{M^{-1}
 +\sum_{m=1}^{M}\frac{|\widehat\nu(m)|}{m}\right\}.
\]
With \(M=10^{410}\), it therefore gives
\begin{equation*}
 D(\nu)\le10\left(
 M^{-1}+\sqrt V+V\log M+\frac{8M}{n}\right)
 <\frac{\kappa}{100\pi}.
\end{equation*}
Here \(D(\nu)\) is the supremum, over arcs \(I\), of
\(|\nu(I)-|I|/(2\pi)|\).

The arc centered at \(\theta\) and having angular length \(4\kappa\)
contains more than \(0.6n\kappa\) zero arguments.  By
\eqref{eq:circle-radial-defect}, fewer than \(3/\kappa\) zeros have
radial defect larger than \(\kappa\).  Since
\(n>5\kappa^{-2}\), the centered arc contains a zero with radial
defect at most \(\kappa\).  Its angular displacement is at most
\(2\kappa\), so its Euclidean distance from \(e^{i\theta}\) is less
than \(3\kappa\), proving \eqref{eq:circle-root-every-angle}.
\end{proof}

The geometric preparation is complete.  We next integrate the
differential equation for \(f'\) along the separated paths.

\begin{lem}
\label{lem:circle-endpoint}
Let \(q\) be either \(a\) or a zero \(z\) supplied by
Lemma~\ref{lem:circle-boundary-roots} for an angle outside \(H\).  Then
\begin{equation}
 0=f(0)+\frac{f'(q)(q-\mu)}n(1+\vartheta_q)+E_q,
 \label{eq:circle-endpoint}
\end{equation}
where
\begin{equation}
 |\vartheta_q|\le10^{182}\sigma^2,
 \qquad
 |E_q|<0.03^n.
 \label{eq:circle-endpoint-errors}
\end{equation}
Furthermore,
\begin{equation}
 \left|\log|f(0)|+nU_\xi(q)\right|
 \le3\cdot10^{182}\sigma^2+32(0.06)^n.
 \label{eq:circle-level-at-root}
\end{equation}
\end{lem}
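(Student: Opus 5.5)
The plan is to use the relation \(f(q)=0\) to write \(f(0)\) as a path integral of \(f'\) starting at \(q\), and then to express \(f'\) along that path in terms of \(f'(q)\) by integrating the logarithmic derivative \(f''/f'=(n-1)s_\xi\) from \cref{eq:critical-log-derivative}.

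\textbf{Choice of path.} For \(q=z\) I take the path \(\gamma\) of Lemma~\ref{lem:circle-obstacles}, which goes from \(z\) to \(w_0\) with \(|w_0|=0.01\). I then append the radial segment from \(w_0\) to \(0\). For \(q=a\) I take the straight segment \([0,a]\). It stays inside \(D(a,1)\) and so avoids \(\overline{D(a,1)}\supset\) nothing of \(\supp\xi\); by \eqref{eq:circle-support} its distance from \(\supp\xi\) is at least of order \(d\), and this is the one spot where I would need extra care. In either case,
\[
 f(0)=-\int_{q\to 0}f'(w)\,\dd w .
\]

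\textbf{Expanding the Stieltjes transform.} I expand \(s_\xi\) about \(\mu\):
\[
 s_\xi(w)=\frac1{w-\mu}+\varepsilon(w),\qquad
 \varepsilon(w)=\E\frac{(\xi-\mu)^2}{(w-\mu)^2(w-\xi)} .
\]
The first-order term drops out because \(\E(\xi-\mu)=0\). On the path we have \(|w|\ge0.01\) and \(|\mu|<10^{-1562}\), and the clearance \eqref{eq:circle-path-clearance} gives \(|w-\xi|>d_\ast\). Hence
\[
 |\varepsilon(w)|\le\frac{10^{4.1}\sigma^2}{d_\ast}\approx10^{176}\sigma^2 .
\]
Integrating \(f''/f'=(n-1)s_\xi\) from \(q\) then gives
\[
 f'(w)=f'(q)\Bigl(\frac{w-\mu}{q-\mu}\Bigr)^{n-1}e^{(n-1)\mathcal E(w)},\qquad
 |\mathcal E(w)|\le10^{176}\sigma^2\, s,
\]
where \(s\) is arclength from \(q\). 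The radial speed bound \eqref{eq:circle-radial-speed} gives
\[
 \Bigl|\frac{w-\mu}{q-\mu}\Bigr|^{n-1}\le e^{-0.7(n-1)s} .
\]
Because \(10^{176}\sigma^2\ll0.1\) by \eqref{eq:circle-small-mu}, the factor \(e^{(n-1)|\mathcal E|}\) is dominated by this geometric decay.

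\textbf{Deriving \eqref{eq:circle-endpoint}.} In the integral I write \(e^{(n-1)\mathcal E}=1+(e^{(n-1)\mathcal E}-1)\).

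The main term is computed exactly:
\[
 f'(q)(q-\mu)^{-(n-1)}\int(w-\mu)^{n-1}\,\dd w
 =\frac{f'(q)(q-\mu)}{n}\Bigl(\Bigl(\frac{w_0-\mu}{q-\mu}\Bigr)^n-1\Bigr).
\]
The boundary term at \(w_0\) is at most \(|f'(q)|(0.0101)^n/n\). By \eqref{eq:circle-f0} and \(|f(0)|\le1\), \(|f'(q)|\) is at most exponential in a controlled way, so this term goes into \(E_q\). The segment from \(w_0\) to \(0\) is bounded in the same way by \((0.02)^{n}\)-type quantities.

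The perturbation term uses \(|e^x-1|\le|x|e^{|x|}\). It is bounded by
\[
 |f'(q)|\int_0^\infty 10^{176}\sigma^2(n-1)s\,e^{-0.6(n-1)s}\,\dd s
 \lesssim 10^{177}\sigma^2\,\frac{|f'(q)|}{n}.
\]
After dividing by \(|q-\mu|\approx1\), this defines \(\vartheta_q\) with \(|\vartheta_q|\le10^{182}\sigma^2\). Substituting both pieces into \(f(0)=-\int f'\) yields \eqref{eq:circle-endpoint}.

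\textbf{The level-set bound \eqref{eq:circle-level-at-root}.} By \cref{eq:critical-potential},
\[
 nU_\xi(q)=-\frac{n}{n-1}\log\frac{|f'(q)|}{n} .
\]
Taking logarithms in \eqref{eq:circle-endpoint} gives
\[
 \log|f(0)|=\log\frac{|f'(q)|}{n}+\log|q-\mu|+\log|1+\vartheta_q|+O(0.03^n/|f(0)|),
\]
where I use \(|f(0)|>1/16\) to control the last term. This bookkeeping is where I expect the main obstacle. The leftover terms are \(\log|q-\mu|\) and \(\tfrac1{n-1}\log\tfrac{|f'(q)|}{n}\), and neither is obviously \(O(\sigma^2)\).

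\emph{Approach to the obstacle.} The bound is cleaner if I do not linearize \(\log|q-\xi|\) about \(\mu\) at all. Instead I would compare \(\log\) of the product formula \(f'(q)=n\prod_j(q-\xi_j)\) directly with the integrated form of \(s_\xi\) above. The aim is to show that the discrepancy between \(-nU_\xi(q)\) and \(\log|f'(q)(q-\mu)/n|\) is governed by \(\sigma^2\) together with \(O(1/n)\). I would then absorb the \(O(1/n)\) part using a lower bound of the form \(\sigma^2\gtrsim 1/n\). Such a bound should follow from \eqref{eq:circle-support}, since critical points cannot all cluster at a single point, combined with \(\E L<4/n\).

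If that absorption fails, the fallback is to track \(\log|q-\mu|\) through \(\Re(\mu\bar q)\). This would keep an explicit \(\mu\)-term, which may cancel against the corresponding term in the product formula.
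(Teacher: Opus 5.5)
Your derivation of \eqref{eq:circle-endpoint} follows the paper's route: you expand \(s_\xi\) about \(\mu\), transport \(f'\) along the cleared path, compute the model integral exactly, and absorb the transport remainder into \(\vartheta_q\). The level-set bound \eqref{eq:circle-level-at-root}, however, has a genuine gap, and the workaround you propose would not close it. Since \(\log(|f'(q)|/n)=-(n-1)U_\xi(q)\) exactly, the discrepancy between \(\log|f'(q)(q-\mu)/n|\) and \(-nU_\xi(q)\) is exactly
\[
 \log|q-\mu|+U_\xi(q)=-\E\log|1-u|,
 \qquad u=\frac{\xi-\mu}{q-\mu}.
\]
Your two leftover terms are not separately small, but this combination is small, and no \(O(1/n)\) term appears. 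The missing step is precisely the linearization about \(\mu\) that you chose to avoid. One has \(\E\Re u=0\) and \(|\log|1-u|+\Re u|\le4(\log(1/d_\ast)+3)|u|^2\). For \(|u|\le1/2\) this is Taylor's theorem. For \(|u|>1/2\) it follows from \(|u|<1.02\) and \(|1-u|=|q-\xi|/|q-\mu|\ge d_\ast/|q-\mu|\), which holds because \(q\) is the initial point of the cleared path (or because \(|a-\xi|>1\)). Taking expectations gives \(|U_\xi(q)+\log|q-\mu||<1700\sigma^2\). Then \(\log|f(0)|=\log|A_q|+\log|1+\vartheta_q|-\log|1+\delta_q|\), with \(A_q=f'(q)(q-\mu)/n\) and \(|\delta_q|<16(0.03)^n\) by \eqref{eq:circle-f0}, finishes the bound. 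Your absorption plan needs \(\sigma^2\gtrsim1/n\), which does not follow from \eqref{eq:circle-support} and \(\E L<4/n\): putting all \(n-1\) points at \(-d-n^{-2}\) satisfies both with \(\sigma=0\). Your fallback of tracking \(\Re(\mu\bar q)\) by expanding about \(0\) naturally leaves an error of order \(V=\sigma^2+|\mu|^2\). The size of \(|\mu|\) relative to \(\sigma^2\) is established only in Lemma~\ref{lem:circle-mean-control}, which itself depends on the present lemma.

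A second, smaller gap is the piece of the integral in \(|w|\le0.01\), which you say is bounded ``in the same way'' as the boundary term. It is not. Because \(V<10^{-3124}\) by \eqref{eq:circle-small-V}, almost all critical points lie extremely close to \(0\). So there is no clearance in the small disk, and the transport formula is unavailable there; this is also why your segment \([0,a]\) has clearance only of order \(d\) near \(0\). The paper stops every path at \(|w_0|=0.01\); for \(q=a\) it uses \([0.01,a]\), where \(|x-\xi|>x+d\ge0.01\). It then bounds \(f'\) on the small disk by the arithmetic--geometric mean inequality, \(|f'(w)|=n\prod_j|w-\xi_j|\le n(|w-\mu|^2+\sigma^2)^{(n-1)/2}<n(0.011)^{n-1}\), which gives \(|f(w_0)-f(0)|<(0.012)^n\). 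The trivial bound \(n(1.01)^{n-1}\) is useless here. For the boundary term at \(w_0\), \eqref{eq:circle-f0} says nothing about \(f'(z)\), but the crude bound \(|f'(z)|\le n2^{n-1}\) already suffices.
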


\begin{proof}
For \(q=z\), use the path in
Lemma~\ref{lem:circle-obstacles}.  For \(q=a\), use the positive real
segment from \(a\) to \(0.01\).  Indeed, for \(0.01\le x\le a\),
\[
 |x-\xi|\ge|a-\xi|-|a-x|>x+d\ge0.01.
\]
Along either path, \(|w|\ge0.01\) and
\(\dist(w,\supp\xi)\ge d_\ast\).

Substituting \(x=w-\mu\) and \(y=\xi-\mu\) into the algebraic identity
\[
\frac1{x-y}
=\frac1x+\frac y{x^2}+\frac{y^2}{x^2(x-y)}
\]
and then taking expectations gives
\begin{equation*}
 s_\xi(w)=\frac1{w-\mu}
 +\E\frac{(\xi-\mu)^2}
 {(w-\mu)^2(w-\xi)}.
\end{equation*}
Equations \eqref{eq:circle-small-mu} and
\eqref{eq:circle-path-clearance} imply
\begin{equation*}
 \left|s_\xi(w)-\frac1{w-\mu}\right|
 <10^{178}\sigma^2.
\end{equation*}
If the path is parametrized by arclength from \(q\), then
\eqref{eq:circle-radial-speed} applies when \(q=z\), whereas
\(-\dd|\gamma(s)|/\dd s=1\) when \(q=a\).  Differentiating
\(|\gamma(s)-\mu|\), and using
\[
 \left|
 \frac{\dd}{\dd s}|\gamma(s)-\mu|
 -\frac{\dd}{\dd s}|\gamma(s)|
 \right|
 \le\frac{2|\mu|}{|\gamma(s)|-|\mu|}<0.01,
\]
therefore gives
\[
 -\frac{\dd}{\dd s}|\gamma(s)-\mu|>0.79.
\]
As
$
 0.989<|q-\mu|<1.001,
$
writing \(R(s)=|\gamma(s)-\mu|\), we have
\[
R'(s)<-0.79,
\qquad
R(s)\le R(0)=|q-\mu|<1.001.
\]
Consequently,
\[
\frac{\dd}{\dd s}\log R(s)
=\frac{R'(s)}{R(s)}
<-\frac{0.79}{1.001}<-0.78.
\]
Integration gives
\begin{equation}
	\frac{|\gamma(s)-\mu|}{|q-\mu|}
	\le e^{-0.78s}.
	\label{eq:circle-radial-exponential}
\end{equation}

Along \(\gamma\), both \(f'\) and \(w-\mu\) are nonzero; hence a
continuous logarithm may be chosen along the path. Integrating \(f''/f'=(n-1)s_\xi\) gives
\begin{equation}
 f'(w)=f'(q)
 \left(\frac{w-\mu}{q-\mu}\right)^{n-1}
 \exp\left((n-1)\int_q^w
 \left(s_\xi(v)-\frac1{v-\mu}\right)\dd v\right).
 \label{eq:circle-fprime-transport}
\end{equation}
Let \(C_0=10^{178}\sigma^2\).  For a point at arclength \(s\), the
last exponential differs from \(1\) by at most
\[
 nC_0s\,e^{nC_0s}.
\]
Since \(C_0<10^{-2946}\), define the transport remainder by
\[
R_q:=
\int_q^{w_0}
\left[
f'(w)-f'(q)
\left(\frac{w-\mu}{q-\mu}\right)^{n-1}
\right]\dd w.
\]
Equations \eqref{eq:circle-radial-exponential} and
\eqref{eq:circle-fprime-transport} give
\[
\begin{aligned}
	|R_q|
	&\le |f'(q)|\int_0^{\ell(\gamma)}
	e^{-0.78(n-1)s}nC_0s\,e^{nC_0s}\,\dd s\\
	&\le |f'(q)|\int_0^\infty
	nC_0s\,e^{-0.7ns}\,\dd s\\
	&=\frac{C_0}{0.49n}|f'(q)|
	<\frac{4C_0}{n}|f'(q)|.
\end{aligned}
\]
Thus
\[
\frac{|R_q|}{|f'(q)(q-\mu)|/n}
<10^{182}\sigma^2.
\]
The exponent \(0.7n\) is available because
\[
 -0.78(n-1)+nC_0<-0.7n
\]
for \(n\ge\Nstar\).

The model integral itself is exact:
\[
 \int_q^{w_0}(w-\mu)^{n-1}\dd w
 =\frac{(w_0-\mu)^n-(q-\mu)^n}{n}.
\]
Set
\[
B_q:=\frac{f'(q)(w_0-\mu)^n}
{n(q-\mu)^{n-1}},
\qquad
C_q:=f(w_0)-f(0).
\]
By the arithmetic--geometric mean inequality,
\[
\frac{|f'(q)|}{n}
=\prod_{j=1}^{n-1}|q-\xi_j|
\le
\left(\E|q-\xi|^2\right)^{(n-1)/2}
=
\left(|q-\mu|^2+\sigma^2\right)^{(n-1)/2}.
\]
Consequently,
\[
|B_q|
\le |w_0-\mu|
\left(
|w_0-\mu|
\sqrt{1+\frac{\sigma^2}{|q-\mu|^2}}
\right)^{n-1}
<0.0101(0.011)^{n-1}
<(0.012)^n.
\]
Similarly, for \(|w|\le0.01\),
\[
\begin{aligned}
	|f'(w)|
	&\le n\left(\E|w-\xi|^2\right)^{(n-1)/2}\\
	&=n\left(|w-\mu|^2+\sigma^2\right)^{(n-1)/2}
	<n(0.011)^{n-1}.
\end{aligned}
\]
Integrating on the line segment from \(0\) to \(w_0\), and using
\(n\ge\Nstar\), gives
\[
|C_q|
\le0.01n(0.011)^{n-1}
<(0.012)^n.
\]
Since \(f(q)=0\), the definition of \(R_q\) and the exact model
integral give
\[
f(w_0)
=B_q-\frac{f'(q)(q-\mu)}n+R_q.
\]
As \(f(w_0)=f(0)+C_q\), it follows that
\[
0=f(0)+\frac{f'(q)(q-\mu)}n(1+\vartheta_q)+E_q,
\]
where
\[
\vartheta_q:=
-\frac{nR_q}{f'(q)(q-\mu)},
\qquad
E_q:=C_q-B_q.
\]
Hence
\[
|\vartheta_q|\le10^{182}\sigma^2,
\qquad
|E_q|<2(0.012)^n<(0.03)^n,
\]
which proves \eqref{eq:circle-endpoint} and
\eqref{eq:circle-endpoint-errors}.

It remains to convert the endpoint identity into a potential estimate.
Put \(x=q-\mu\) and \(u=(\xi-\mu)/x\).  Then
\[
 |x|>0.989,\qquad |u|<1.02,\qquad
 |1-u|\ge\frac{d_\ast}{|x|}
\]
for a selected boundary zero; the separation is stronger when \(q=a\).
For these values,
\begin{equation}
 \left|\log|1-u|+\Re u\right|
 \le4\left(\log\frac1{d_\ast}+3\right)|u|^2.
 \label{eq:circle-log-quadratic}
\end{equation}
To verify this inequality, use the Taylor series when \(|u|\le1/2\).
When \(|u|>1/2\), bound its left side by
\(\log(1/d_\ast)+3\) and use \(4|u|^2>1\).
Taking expectations and using \(\E(\xi-\mu)=0\) gives
\begin{align}
	\left|-U_\xi(q)-\log|q-\mu|\right|
	&\le
	4\left(\log\frac1{d_\ast}+3\right)
	\frac{\sigma^2}{|q-\mu|^2} \notag\\
	&\le
	\frac{4(\log(1/d_\ast)+3)}{0.989^2}\sigma^2
	<1700\sigma^2.
	\label{eq:circle-centered-potential}
\end{align}

By \eqref{eq:circle-f0}, division of
\eqref{eq:circle-endpoint} by \(f(0)\) is harmless.  Set
\[
 A_q=\frac{f'(q)(q-\mu)}n,
 \qquad
 \delta_q=\frac{E_q}{f(0)}.
\]
Then
\[
 A_q(1+\vartheta_q)=-f(0)(1+\delta_q),
\]
where
\[
 |\delta_q|<16(0.03)^n\le(0.06)^n.
\]
Both \(\vartheta_q\) and \(\delta_q\) have modulus less than \(1/2\),
and therefore
\[
 \log|f(0)|
 =\log|A_q|+\log|1+\vartheta_q|-\log|1+\delta_q|.
\]
Using \(|\log|1+u||\le2|u|\) for \(|u|\le1/2\), together with
\[
 \log|f'(q)|=\log n-(n-1)U_\xi(q),
\]
and \eqref{eq:circle-centered-potential}, gives
\eqref{eq:circle-level-at-root}.
\end{proof}

The endpoint estimate immediately places the selected zeros on an
effective approximate level set.

\begin{cor}
\label{cor:circle-level-set}
For every selected zero \(z\),
\begin{equation}
 |U_\xi(z)-U_\xi(a)|
 \le\frac{6\cdot10^{182}\sigma^2+64(0.06)^n}{n}.
 \label{eq:circle-level-set}
\end{equation}
\end{cor}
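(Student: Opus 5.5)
The corollary follows by applying the level-set estimate \eqref{eq:circle-level-at-root} of Lemma~\ref{lem:circle-endpoint} twice and subtracting. The lemma allows \(q\) to be either the distinguished zero \(a\) or any zero \(z\) supplied by Lemma~\ref{lem:circle-boundary-roots} for an angle outside \(H\). A ``selected zero'' is exactly such a \(z\), so both applications are legitimate. Each of the two inequalities has the same left-hand quantity \(\log|f(0)|\), which is finite and nonzero by \eqref{eq:circle-f0}.

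Concretely, I will write
\[
 nU_\xi(z)-nU_\xi(a)
 =\bigl(\log|f(0)|+nU_\xi(z)\bigr)-\bigl(\log|f(0)|+nU_\xi(a)\bigr).
\]
The triangle inequality, combined with \eqref{eq:circle-level-at-root} at \(q=z\) and at \(q=a\), then gives
\[
 n\,|U_\xi(z)-U_\xi(a)|\le 2\bigl(3\cdot10^{182}\sigma^2+32(0.06)^n\bigr)
 =6\cdot10^{182}\sigma^2+64(0.06)^n.
\]
Dividing by \(n\) yields \eqref{eq:circle-level-set}.

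There is no real obstacle here. The only point to check is that \(U_\xi\) is finite at both \(z\) and \(a\). Since \(f'(a)\ne0\) by \eqref{eq:circle-counter}, \(U_\xi(a)\) is finite. For the selected zero \(z\), the path clearance \eqref{eq:circle-path-clearance} gives \(\dist(z,\supp\xi)>d_\ast\), so \(f'(z)\ne0\) and \(U_\xi(z)\) is finite as well. The identity \eqref{eq:critical-potential} is therefore valid at both points, as the proof of Lemma~\ref{lem:circle-endpoint} already used.
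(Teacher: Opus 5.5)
Your proposal is correct and is the paper's proof: apply \eqref{eq:circle-level-at-root} at \(q=z\) and at \(q=a\), then use the triangle inequality and divide by \(n\). Your finiteness check, that \(f'(z)\ne0\) because \(z\) is the starting point of the path in \eqref{eq:circle-path-clearance} and \(f'(a)\ne0\) by \eqref{eq:circle-counter}, is a harmless extra detail that the paper leaves implicit.
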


\begin{proof}
Apply \eqref{eq:circle-level-at-root} first to \(z\) and then to \(a\),
and use the triangle inequality.
\end{proof}

The level-set relation will first control the mean and the distance of
\(a\) from the unit circle.  Put
\begin{equation*}
 Q=10^{800}(0.06)^n.
\end{equation*}
The following lemma closely follows the argument of
\cite[Proposition~5.6(i)]{Tao22}, with the relevant
estimates recorded in the quantitative form needed below.
\begin{lem}
\label{lem:circle-mean-control}
One has
\begin{align}
 |\mu|&\le2\cdot10^5\sigma^2+20Q,
 \label{eq:circle-mu-fine}\\
 d&\le2\cdot10^4\sigma^2+3Q.
 \label{eq:circle-d-fine}
\end{align}
\end{lem}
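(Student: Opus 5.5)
The plan is to turn the approximate level-set relation of Corollary~\ref{cor:circle-level-set} into a geometric statement about \(|z-\mu|\) versus \(|a-\mu|\), and then test it in two opposite directions. By \eqref{eq:circle-centered-potential}, applied at \(q=a\) and at a selected zero \(q=z\), we have \(-U_\xi(q)=\log|q-\mu|+O(1700\sigma^2)\). Combined with \eqref{eq:circle-level-set}, this gives
\[
\bigl|\log|z-\mu|-\log|a-\mu|\bigr|\le 3400\sigma^2+\frac{6\cdot10^{182}\sigma^2+64(0.06)^n}{n}\le 3401\sigma^2+Q .
\]
This holds for every zero \(z\) supplied by Lemma~\ref{lem:circle-boundary-roots} at an angle \(\theta\notin H\). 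Here the factor \(1/n\) kills the \(10^{182}\) constant, since \(n\ge\Nstar\).

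Next I expand both logarithms to first order in \(\mu\), using \(|\mu|<10^{-1562}\) from \eqref{eq:circle-small-mu}. This gives
\[
\log|a-\mu|=\log a-\Re\mu/a+O(|\mu|^2),\qquad \log|z-\mu|=\log|z|-\Re(\mu/z)+O(|\mu|^2).
\]
I then use \(\log a=\log(1-d)=-d+O(d^2)\). Since \(|z-e^{i\theta}|<4\kappa\), we have \(|\Re(\mu/z)-\Re(\mu e^{-i\theta})|\le 10\kappa|\mu|\). The \(O(d^2)\) term is harmless because \(d\le n^{-1/64}\) is tiny, so it can be absorbed into \(d\) with a factor close to \(1\). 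The outcome is a two-sided relation in which \(\log|z|\le0\) appears:
\[
0\le-\log|z| \le d+\Re\mu-\Re(\mu e^{-i\theta})+\varepsilon,\qquad \varepsilon:=3401\sigma^2+Q+O(|\mu|^2+\kappa|\mu|+d^2).
\]
Only the inequality \(-\log|z|\ge0\) is used; this is where \(|z|\le1\) enters.

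Now I choose two angles. Because \(|H|<\Delta\), I can pick \(\theta_1\notin H\) within \(10^{-19}\) of \(\arg\mu\), so that \(\Re(\mu e^{-i\theta_1})\ge|\mu|(1-10^{-37})\). This yields \(|\mu|-\Re\mu-d\lesssim\varepsilon\). I then pick \(\theta_2\notin H\) within \(10^{-19}\) of \(\arg\mu+\pi\). Here I use that \(H\) is invariant under translation by \(\pi\), so I may take \(\theta_2=\theta_1+\pi\). This gives \(d+\Re\mu+|\mu|\lesssim\varepsilon\). Adding the two inequalities gives \(2|\mu|(1-O(10^{-37}))\le2\varepsilon\). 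The terms \(O(|\mu|^2+\kappa|\mu|)\) in \(\varepsilon\) are then absorbed into the left side, giving \(|\mu|\le(1+o(1))(3401\sigma^2+Q)\), which is well within \eqref{eq:circle-mu-fine}. Substituting back into the \(\theta_2\) inequality gives \(d\le\varepsilon-\Re\mu-|\mu|\le\varepsilon\). This yields \eqref{eq:circle-d-fine}, with the residual \(O(|\mu|^2)\) bounded by the \(|\mu|\) bound just obtained.

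The step I expect to need the most care is the bookkeeping of error terms of mixed type. These are \(\kappa|\mu|\), where \(\kappa\) is a fixed constant that is not small relative to \(\sigma^2\), together with \(d^2\) and \(|\mu|^2\). The key point is that each of these is a small multiple of \(|\mu|\) or \(d\), not an absolute error. So they must be absorbed into the left-hand sides, never bounded crudely by \(\kappa\). One must also check that a zero \(z\) with \(|z|\) possibly far below \(1\) only helps, because \(-\log|z|\ge0\) appears on the favorable side.
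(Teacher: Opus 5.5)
\textbf{There is a genuine gap: the step with $\theta_2$ has the inequality backwards.}

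Your first half is sound. Combining \eqref{eq:circle-centered-potential} at $a$ and at a selected zero $z$ with Corollary~\ref{cor:circle-level-set} and $-\log|z|\ge0$ gives, for every $\theta\notin H$,
\[
\Re\bigl(\mu e^{-i\theta}\bigr)\le d+\Re\mu+\varepsilon .
\]
Your angle $\theta_1$ near $\arg\mu$ then correctly yields $|\mu|-\Re\mu\le d+\varepsilon$, modulo absorbable multiples of $|\mu|$.

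At $\theta_2=\theta_1+\pi$, however, $\Re(\mu e^{-i\theta_2})\approx-|\mu|$. Your own chain $0\le-\log|z|\le d+\Re\mu-\Re(\mu e^{-i\theta_2})+\varepsilon$ then says only $-|\mu|\le d+\Re\mu+\varepsilon$, which is vacuous. It does not give $d+\Re\mu+|\mu|\lesssim\varepsilon$. That would need the other half of the two-sided relation together with $-\log|z|=O(\sigma^2+Q)$, and this is not available. Lemma~\ref{lem:circle-boundary-roots} only places $z$ within $4\kappa$ of $e^{i\theta}$, and $\kappa=10^{-400}$ is enormous compared with $\sigma^2<10^{-3124}$, let alone $Q$.

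No choice of angles can repair this. The left side of the display is at most $|\mu|$, so the whole family of constraints is equivalent to $|\mu|-\Re\mu\le d+\varepsilon$. That holds for any small positive real $\mu$ and any $d$. So the level-set relation alone never controls $d$ or a positive $\Re\mu$. Both your addition step and your final bound on $d$ rest on the unjustified inequality.

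\textbf{The missing ingredient is the support condition} \eqref{eq:circle-support}, used as $\E|a-\xi|^2>1$. This reads $a^2-2a\Re\mu+V>1$ and gives $\Re\mu\le-d+0.506V$, which is \eqref{eq:circle-Re-upper} in the paper. With it, your $\theta_1$ inequality closes on its own:
\[
|\mu|\le d+\Re\mu+\varepsilon\le0.506V+\varepsilon,
\qquad
d\le-\Re\mu+0.506V\le|\mu|+0.506V .
\]
Then absorb $|\mu|^2$ via $V=\sigma^2+|\mu|^2$, together with the $\kappa|\mu|$, $d|\mu|$ and $d^2$ terms; for the last, use $d^2\le n^{-1/64}d$ and the second inequality. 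This gives, for instance, $|\mu|,\,d\le4000\sigma^2+2Q$, comfortably inside \eqref{eq:circle-mu-fine} and \eqref{eq:circle-d-fine}.

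Once repaired, your route differs from the paper's only in bookkeeping:
\begin{itemize}
\item You expand around $\mu$, with error about $3400\sigma^2$, where the paper expands around $0$, with error $10^4V$.
\item You use a single angle adapted to $\arg\mu$, where the paper uses four fixed windows $J_1,\dots,J_4$. These separately give a lower bound on $\Re\mu$ and a two-sided bound on $\Im\mu$.
\end{itemize}
In both versions the upper bound on $\Re\mu+d$ has to come from the support inequality, not from the level set.
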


\begin{proof}
For \(w=a\) or for a selected zero \(w=z\), put
\[
 G(w,\xi)=\log\left|1-\frac{\xi}{w}\right|
           +\Re\frac{\xi}{w}.
\]
Here \(|w|>0.989\), \(|\xi/w|<1.02\), and
\[
 \left|1-\frac{\xi}{w}\right|
 =\frac{|w-\xi|}{|w|}
 >\frac{d_\ast}{|w|};
\]
at \(w=a\), the separation is in fact greater than \(1/a\).
The same two-case argument used for
\eqref{eq:circle-log-quadratic}, now with \(u=\xi/w\), gives
\[
 |G(w,\xi)|
 \le4\left(\log\frac1{d_\ast}+3\right)
       \frac{|\xi|^2}{|w|^2}
 <1700|\xi|^2.
\]
Since
\[
 U_\xi(w)=-\log|w|+\Re\frac{\mu}{w}-\E G(w,\xi),
\]
comparison at \(a\) and \(z\) gives
\begin{equation*}
 \left|
 U_\xi(a)-U_\xi(z)
 -\left\{\log\frac{|z|}{a}
 +\Re\left(\left(\frac1a-\frac1z\right)\mu\right)\right\}
 \right|\le4000V.
\end{equation*}
Combining this with Corollary~\ref{cor:circle-level-set} and using
\(|z|\le1\), we obtain
\[
 4000V+
 \frac{6\cdot10^{182}\sigma^2+64(0.06)^n}{n}
 \le
 \left(4000+\frac{6\cdot10^{182}}n\right)V
 +\frac{64\cdot10^{-800}}n\,Q
 <10^4V+Q.
\]
Consequently,
\begin{equation}
 \Re\left(\left(\frac1a-\frac1z\right)\mu\right)
 \ge\log a-E_0,
 \qquad E_0=10^4V+Q.
 \label{eq:circle-halfplanes}
\end{equation}

Choose angles outside \(H\) in the four intervals
\begin{equation*}
\begin{aligned}
 J_1&=[0.98\pi,0.99\pi],&
 J_2&=[1.01\pi,1.02\pi],\\
 J_3&=[0.49\pi,0.50\pi],&
 J_4&=[1.50\pi,1.51\pi].
\end{aligned}
\end{equation*}
This is possible because each interval has length \(0.01\pi\), while
\(|H|<10^{-20}\).  Let \(z_j\) be the corresponding zeros and set
\(c_j=1/a-1/z_j\).

For \(j=1,2\), the imaginary parts of \(c_j\) have opposite signs.
Indeed, up to an error smaller than \(10^{-390}\), the first two
windows give
\[
 -1.001<\Re\frac1{z_j}<-0.997,
\]
while the two imaginary parts of \(1/z_j\) have opposite signs and
absolute values between \(0.031\) and \(0.064\).  Since
\(1<1/a<1.011\), a convex combination of \(c_1,c_2\) is real and
belongs to \([1.9,2.1]\).  Hence
\eqref{eq:circle-halfplanes} and
\(\log(1-d)\ge-d/(1-d)\ge-1.011d\) give
\begin{equation}
 \Re\mu\ge-\frac{1.011d+E_0}{1.9}.
 \label{eq:circle-Re-lower}
\end{equation}
On the other hand, the support inequality
\(\E|a-\xi|^2>1\) gives
\begin{equation}
 \Re\mu\le-d+0.506V.
 \label{eq:circle-Re-upper}
\end{equation}
Comparing \eqref{eq:circle-Re-lower} and
\eqref{eq:circle-Re-upper}, and recalling \(E_0=10^4V+Q\), yields
\begin{align}
 d&\le1.2\cdot10^4V+2Q,
 \label{eq:circle-d-in-V}\\
 |\Re\mu|&\le1.2\cdot10^4V+2Q.
 \label{eq:circle-Re-bound}
\end{align}

For \(j=3,4\), direct trigonometry gives
\[
 |\Re c_j|\le1.2,\qquad
 \Im c_3\ge0.99,\qquad \Im c_4\le-0.99.
\]
For example, on these two windows
\[
 |\cos\theta_j|\le\sin(0.01\pi)<0.032,
 \qquad |\sin\theta_j|\ge\cos(0.01\pi)>0.999,
\]
and the perturbation from \(e^{i\theta_j}\) to \(z_j\) is smaller
than \(4\kappa\).
Applying \eqref{eq:circle-halfplanes} for both signs, followed by
\eqref{eq:circle-d-in-V} and \eqref{eq:circle-Re-bound}, gives
\[
 |\Im\mu|\le3.7\cdot10^4V+6Q.
\]
Thus
\begin{equation}
 |\mu|\le6\cdot10^4V+10Q.
 \label{eq:circle-mu-in-V}
\end{equation}
Since \(V=\sigma^2+|\mu|^2<10^{-3124}\), the term
\(6\cdot10^4|\mu|^2\) can be absorbed into the left side of
\eqref{eq:circle-mu-in-V}, since
\[
 6\cdot10^4|\mu|<10^{-1557}<\frac12.
\]
It follows first that
\[
 |\mu|\le1.2\cdot10^5\sigma^2+20Q
 <2\cdot10^5\sigma^2+20Q,
\]
which is \eqref{eq:circle-mu-fine}.  Moreover,
\[
 1.2\cdot10^4|\mu|^2
 \le1.2\cdot10^4\,10^{-1562}
       \bigl(2\cdot10^5\sigma^2+20Q\bigr)
 <8\cdot10^3\sigma^2+Q.
\]
Substitution in \eqref{eq:circle-d-in-V} now gives
\eqref{eq:circle-d-fine}.
\end{proof}

We now compare the nearby zero with the exact boundary point.  The
one-sided form of this comparison is essential: it retains the favorable
term \(-\log|z|\). The next lemma is a minor quantitative refinement of
\cite[Proposition~5.6(ii)]{Tao22}, obtained by making
the Taylor remainder explicit.

\begin{lem}
\label{lem:circle-good-angle}
For every \(\theta\notin H\),
\begin{equation}
 U_\xi(a)-U_\xi(e^{i\theta})
 \ge-10^{-20}V-Q.
 \label{eq:circle-good-angle}
\end{equation}
\end{lem}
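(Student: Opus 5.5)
Given \(\theta\notin H\), I will use Lemma~\ref{lem:circle-boundary-roots} to pick a zero \(z\) with \(|z-e^{i\theta}|<4\kappa\). Then Corollary~\ref{cor:circle-level-set} applies to \(z\). The idea is to split
\[
U_\xi(a)-U_\xi(e^{i\theta})=\bigl(U_\xi(a)-U_\xi(z)\bigr)+\bigl(U_\xi(z)-U_\xi(e^{i\theta})\bigr).
\]
By Corollary~\ref{cor:circle-level-set}, the first bracket is at least \(-(6\cdot10^{182}\sigma^2+64(0.06)^n)/n\). Since \(n\ge\Nstar\), this is at least \(-10^{-21}V-Q/2\). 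So everything reduces to a one-sided lower bound for \(U_\xi(z)-U_\xi(e^{i\theta})\), the comparison between the zero and the exact boundary point.

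For that comparison I will use the decomposition from the proof of Lemma~\ref{lem:circle-mean-control}:
\[
U_\xi(w)=-\log|w|+\Re\frac{\mu}{w}-\E G(w,\xi),\qquad G(w,\xi)=\log\Bigl|1-\frac{\xi}{w}\Bigr|+\Re\frac{\xi}{w}.
\]
At \(w=z\) the term \(-\log|z|\ge0\) is the favorable one, and I will discard it. At \(w=e^{i\theta}\) this term vanishes. So it suffices to bound two differences.

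The linear terms satisfy
\[
\Bigl|\Re\mu\Bigl(\frac1z-e^{-i\theta}\Bigr)\Bigr|\le 5\kappa|\mu|.
\]
By Lemma~\ref{lem:circle-mean-control}, \(|\mu|\le 2\cdot10^5\sigma^2+20Q\), so this term is at most \(10^{-390}V+Q/10\).

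The remaining term \(\E\bigl[G(e^{i\theta},\xi)-G(z,\xi)\bigr]\) is the \textbf{main obstacle}. A crude bound \(|G|\le1700|\xi|^2\) at each point only gives \(O(V)\), not \(10^{-20}V\). I therefore want a Lipschitz estimate in \(w\) that is quadratic in \(\xi\) along the segment \([z,e^{i\theta}]\). Writing \(u=\xi/w\), we have \(G=\Re\bigl(\log(1-u)+u\bigr)\), whose \(u\)-derivative is \(-u/(1-u)\), and \(\partial u/\partial w=-u/w\). Hence
\[
|\partial_w G|\le\frac{|u|^2}{|1-u||w|}.
\]
This requires the whole segment \([z,e^{i\theta}]\) to stay a definite distance from \(\supp\xi\).

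For the clearance I plan two cases. For critical points not in \(T\), we have \(\dist(\xi,\mathcal A)\le\tau\). Points within \(4\kappa\) of \(e^{i\theta}\) are far from \(\mathcal A\) except near \(e^{\pm 2\pi i/3}\) and \(1\). The directions near \(e^{\pm2\pi i/3}\) are excluded by \eqref{eq:circle-delete-transition} and its \(100\kappa\)-thickening. Near \(1\), critical points satisfy \(|a-\xi|>1\), which keeps them at distance roughly \(1\) from \(e^{i\theta}\). For \(t\in T\), the deletion \eqref{eq:circle-delete-radial}, together with the thickening, keeps \(t\) away from the short segment. In either case the distance is at least \(d_\ast\), giving \(|1-u|\gtrsim d_\ast\).

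With this clearance,
\[
\bigl|\E G(e^{i\theta},\xi)-\E G(z,\xi)\bigr|\le 4\kappa\cdot\frac{1.1\,V}{d_\ast}\le 10^{-227}V.
\]
If the clearance argument for \(T\) turns out messy, I will fall back on splitting \(\xi\) into the bulk near \(\mathcal A\), which is at distance at least \(10^{-28}\) from the segment, and the at most \(K_0\) points of \(T\), each at distance at least \(r_\ast\) and each weighted \(1/(n-1)\). Summing the three contributions gives \(-10^{-20}V-Q\), which is the claimed bound.
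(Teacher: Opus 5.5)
Your proposal is correct and follows the paper's proof essentially step for step. You use the same decomposition $U_\xi(w)=-\log|w|+\Re(\mu/w)-\E G(w,\xi)$, discard $-\log|z|\ge0$, and bound the linear term by $5\kappa|\mu|$. You also apply the mean-value theorem on $[z,e^{i\theta}]$ with the exact gradient $|\xi|^2/(|w|^2|w-\xi|)$, and finish with Corollary~\ref{cor:circle-level-set} and \eqref{eq:circle-mu-fine}.

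The only difference is how you get clearance of the segment. The paper gets it at once from \eqref{eq:circle-path-clearance}: the path $\gamma$ starts at $z$, so $\dist(z,\supp\xi)>d_\ast$, and $4\kappa<d_\ast/2$. Your re-derivation from the deletions also works. Note, however, that $\mathcal A$ meets the unit circle at $e^{\pm i\pi/3}$, exactly the directions removed by \eqref{eq:circle-delete-transition}, not at $e^{\pm2\pi i/3}$. Also, every point of $\mathcal A$ is at distance $1$ from the point $1$, so nothing special happens there.
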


\begin{proof}
Let \(z\) be the selected zero near \(e^{i\theta}\), and define
\[
 G(w,\xi)=
 \log\left|1-\frac{\xi}{w}\right|+\Re\frac{\xi}{w}.
\]
Direct differentiation gives the exact identity
\begin{equation}
 |\nabla_wG(w,\xi)|
 =\frac{|\xi|^2}{|w|^2|w-\xi|}.
 \label{eq:circle-G-gradient}
\end{equation}
The segment from \(z\) to \(e^{i\theta}\) remains at distance
greater than \(d_\ast/2\) from every critical point, because its length
is less than \(4\kappa\).  Since
\[
 U_\xi(w)=-\log|w|+\Re\frac{\mu}{w}-\E G(w,\xi)
\]
and \(-\log|z|\ge0\), the mean-value theorem and
\eqref{eq:circle-G-gradient} give
\begin{equation*}
 U_\xi(z)-U_\xi(e^{i\theta})
 \ge-5\kappa|\mu|-\frac{12\kappa}{d_\ast}V.
\end{equation*}
Combine this with \eqref{eq:circle-level-set} and
\eqref{eq:circle-mu-fine}.  The numerical values in
\eqref{eq:circle-parameters}, together with \(n\ge\Nstar\), make the
sum of the \(V\)-terms smaller than \(10^{-20}V\), while all
exponential terms are bounded by \(Q\).  This proves
\eqref{eq:circle-good-angle}.
\end{proof}

The proof now closes by comparing two estimates for the second moment.
Let \(dm=\dd\theta/(2\pi)\); the set \(H\) is already
\(\pi\)-translation-invariant. The following Fourier estimate is a quantitative version of the
second-order closure argument in
\cite[Proposition~5.6(iii) and (5.23)]{Tao22}.

\begin{lem}
\label{lem:circle-Fourier}
One has
\begin{align}
 \E L&\le1.01\cdot10^{-10}V+1.01Q,
 \label{eq:circle-EL-small}\\
 \left|\Re\E\xi^2\right|
 &\le8.01\cdot10^{-10}V+8Q.
 \label{eq:circle-second-small}
\end{align}
\end{lem}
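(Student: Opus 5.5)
The plan is to integrate the one-sided pointwise inequality of Lemma~\ref{lem:circle-good-angle} over the good angles \(\theta\notin H\), against the nonnegative weights \(1\) and \(1\pm\cos2\theta\). The full-circle integrals will then be computed exactly from the Fourier expansion of \(U_\xi\) on \(\partial\D\), and the contribution of \(H\) will be shown to be small.

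\emph{Fourier expansion.} Since \(|\xi|\le1\), we have, in the \(L^2(\dd m)\) sense (and pointwise off \(\supp\xi\)),
\[
 U_\xi(e^{i\theta})=\sum_{m\ge1}\frac{\Re\bigl(\E\xi^m e^{-im\theta}\bigr)}{m}.
\]
Hence \(\int U_\xi\,\dd m=0\) and \(\int U_\xi\cos2\theta\,\dd m=\tfrac14\Re\E\xi^2\). Moreover \(U_\xi(a)=-\E L\).

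\emph{Bounding the integral over \(H\).} The \(m=1\) term is \(\Re(\mu e^{-i\theta})\), which is odd under \(\theta\mapsto\theta+\pi\). Since \(H\) and the weights \(1\pm\cos2\theta\) are \(\pi\)-invariant, this term integrates to zero over \(H\). This is exactly why \(H\) was symmetrized, because \(|\mu|\) is only controlled by \(\sigma^2\) up to the large factor \(2\cdot10^5\). Write \(U_\xi^{\ge2}\) for the sum of the terms with \(m\ge2\). Using \(|\E\xi^m|\le V\) for \(m\ge2\) and Parseval,
\[
\|U_\xi^{\ge2}\|_{L^2(\dd m)}\le V\Bigl(\sum_{m\ge2}\frac1{2m^2}\Bigr)^{1/2}<0.6V .
\]
Cauchy--Schwarz together with \(|H|<10^{-20}\) (so \(m(H)<10^{-20}/(2\pi)\)) then gives
\[
\Bigl|\int_H w\,U_\xi\,\dd m\Bigr|\le 2\,m(H)^{1/2}\cdot0.6V<10^{-10}V
\]
for any weight \(0\le w\le2\).

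\emph{First estimate.} Integrate \eqref{eq:circle-good-angle} over \(H^c\) with weight \(1\). Using \(\int_{H^c}U_\xi\,\dd m=-\int_HU_\xi\,\dd m\), this gives
\[
-(1-m(H))\E L+\int_HU_\xi\,\dd m\ge-(1-m(H))(10^{-20}V+Q).
\]
Rearranging and dividing by \(1-m(H)\) yields \eqref{eq:circle-EL-small}.

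\emph{Second estimate.} Integrate with the weight \(1\pm\cos2\theta\ge0\). The full-circle integral of \(U_\xi\) against this weight is \(\pm\tfrac14\Re\E\xi^2\), and the weight has total mass \(1\). We obtain
\[
\pm\tfrac14\Re\E\xi^2\le-\E L\int_{H^c}(1\pm\cos2\theta)\,\dd m+\int_H(1\pm\cos2\theta)U_\xi\,\dd m+(10^{-20}V+Q).
\]
Drop the first term on the right, since \(L>0\) by \eqref{eq:circle-log-distance}. Bound the \(H\)-integral by the Cauchy--Schwarz estimate above with weight at most \(2\), and multiply by \(4\). This gives \eqref{eq:circle-second-small} with the stated constants.

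\emph{Main obstacle.} The delicate point is the control of \(\int_H U_\xi\). A naive \(L^\infty\) bound fails because \(U_\xi\) can be large near critical points lying close to the circle. A plain \(L^2\) bound is also too weak, since the first Fourier coefficient \(\mu\) is not small enough compared with \(V\). The \(\pi\)-invariance of \(H\) removes the \(m=1\) mode exactly, and after that Cauchy--Schwarz suffices. Some care is needed to justify interchanging the Fourier series with integration over \(H\); this follows from \(L^2\) convergence.
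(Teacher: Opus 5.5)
Your proposal is correct and follows the paper's proof essentially step for step. The shared ingredients are the Fourier expansion of \(U_\xi\) on \(\partial\D\) with \(\int U_\xi\,dm=0\) and \(\int U_\xi\cos2\theta\,dm=\tfrac14\Re\E\xi^2\), and exact cancellation of the \(m=1\) mode over the \(\pi\)-invariant set \(H\) against the \(\pi\)-periodic weights \(1\) and \(1\pm\cos2\theta\). The remaining steps are a Parseval/Cauchy--Schwarz bound of order \(\sqrt{m(H)}\,V\) for the modes \(m\ge2\), and integration of Lemma~\ref{lem:circle-good-angle} over \(H^c\), discarding the nonpositive term \(-\E L\int_{H^c}(1\pm\cos2\theta)\,dm\).

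The only slip is in wording: your bound on \(\int_H wU_\xi\,dm\) needs \(w\) to be \(\pi\)-periodic, not merely \(0\le w\le2\), so that the \(m=1\) term drops out. This does hold for the weights you actually use.
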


\begin{proof}
For \(0<r<1\), expand the potential of \(r\xi\) on the unit
circle and then let \(r\uparrow1\). Since the law of \(\xi\) has
finite support, the logarithmic kernels converge in \(L^2(dm)\).
Thus all Fourier identities below hold in \(L^2(dm)\), and hence
almost everywhere; the finitely many possible singular angles have
measure zero. In this sense,
\begin{equation*}
 U_\xi(e^{i\theta})
 =\Re(\mu e^{-i\theta})+\mathcal R(\theta),
\end{equation*}
where Parseval's identity gives
\[
 \|\mathcal R\|_2^2
 =\frac12\sum_{k=2}^{\infty}
 \frac{|\E\xi^k|^2}{k^2}
 \le\frac{V^2}{2}\sum_{k=2}^{\infty}\frac1{k^2}.
\]
Thus
\begin{equation}
 \|\mathcal R\|_2<0.568V.
 \label{eq:circle-R-L2}
\end{equation}
Jensen's formula and Fourier inversion give
\begin{align}
 \int U_\xi(e^{i\theta})\,dm&=0,
 \label{eq:circle-mean-zero}\\
 \int\cos(2\theta)U_\xi(e^{i\theta})\,dm
 &=\frac14\Re\E\xi^2.
 \label{eq:circle-second-Fourier}
\end{align}

Because \(H+\pi=H\), the linear term
\(\Re(\mu e^{-i\theta})\) integrates to zero over \(H\) against
each of the \(\pi\)-periodic weights
\[
 1,\qquad1+\cos2\theta,\qquad1-\cos2\theta.
\]
Also, \(m(H)<\Delta\).  Integrating
\eqref{eq:circle-good-angle} over the complement of \(H\), using
\(U_\xi(a)=-\E L\), \eqref{eq:circle-mean-zero}, and
\eqref{eq:circle-R-L2}, gives
\[
 (1-m(H))\E L
 \le0.568\sqrt\Delta\,V+10^{-20}V+Q.
\]
This proves \eqref{eq:circle-EL-small}.

Put
\[
\varepsilon:=10^{-20}V+Q,
\qquad
W_\pm(\theta):=1\pm\cos(2\theta).
\]
By \eqref{eq:circle-good-angle},
\[
\E L+U_\xi(e^{i\theta})\le\varepsilon
\qquad(\theta\notin H).
\]
Since \(\E L\ge0\) and \(W_\pm\ge0\), integration over \(H^c\)
gives
\[
\int_{H^c}W_\pm(\theta)U_\xi(e^{i\theta})\,dm
\le
\varepsilon\int_{H^c}W_\pm(\theta)\,dm.
\]
By \eqref{eq:circle-mean-zero} and
\eqref{eq:circle-second-Fourier},
\[
\int W_\pm(\theta)U_\xi(e^{i\theta})\,dm
=\pm\frac14\Re\E\xi^2.
\]
Because \(H+\pi=H\), the linear term
\(\Re(\mu e^{-i\theta})\) integrates to zero over \(H\) against
\(W_\pm\).  Therefore
\[
\pm\frac14\Re\E\xi^2
\le
\varepsilon
+\left|\int_H W_\pm(\theta)\mathcal R(\theta)\,dm\right|.
\]
Furthermore,
\[
\|W_\pm\mathbf 1_H\|_2
\le2\sqrt{m(H)}
<2\sqrt\Delta.
\]
Thus, by \eqref{eq:circle-R-L2},
\[
\frac14\left|\Re\E\xi^2\right|
\le
2(0.568)\sqrt\Delta\,V+\varepsilon.
\]
The numerical values of \(\Delta\) and \(\varepsilon\) now imply
\[
\left|\Re\E\xi^2\right|
\le8.01\cdot10^{-10}V+8Q,
\]
which is \eqref{eq:circle-second-small}.
\end{proof}

The geometry of \(\mathcal A\) supplies the incompatible estimate.
The next lemma follows Tao's arc-projection and negative second-moment
argument in \cite[pp.~389--390]{Tao22}, with a minor adjustment to the
absorption step based on Young's inequality and with all constants
made explicit.

\begin{lem}
\label{lem:circle-geometric}
One has
\begin{equation}
 \Re\E\xi^2
 \le\left(-\frac14+4.04\cdot10^{-8}\right)V
 +576d^2+404Q.
 \label{eq:circle-geometric-second}
\end{equation}
\end{lem}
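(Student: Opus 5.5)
The plan is a pointwise inequality on the arc \(\mathcal A\), transferred to each critical point through Lemma~\ref{lem:circle-projection}, and then averaged, with the error controlled by Lemma~\ref{lem:circle-Fourier}.

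\emph{Step 1: the arc.} Parametrize \(\mathcal A\) by \(y=1+e^{i\phi}\) with \(\phi\in[2\pi/3,4\pi/3]\). Then
\[
 y=2\cos(\phi/2)\,e^{i\phi/2},\qquad |y|^2=4\cos^2(\phi/2),\qquad \Re y^2=|y|^2\cos\phi .
\]
Since \(\cos\phi\le-\tfrac12\) on this range, every \(y\in\mathcal A\) satisfies
\[
 \Re y^2\le-\tfrac12|y|^2 .
\]
This margin of \(1/2\) is stronger than the \(1/4\) that is needed. The surplus will be spent in the absorption step.

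\emph{Step 2: transfer to a critical point.} For each critical point \(\xi\), take \(y\in\mathcal A\) from Lemma~\ref{lem:circle-projection} and set \(e=|\xi-y|\le6(d+L)\). Using \(\Re\xi^2-\Re y^2\le e\,|\xi+y|\le e(2|\xi|+e)\) and \(-|y|^2\le-|\xi|^2+2|\xi|e+e^2\), I obtain
\[
 \Re\xi^2\le-\tfrac12|\xi|^2+3|\xi|e+\tfrac32e^2 .
\]
Young's inequality, \(3|\xi|e\le\tfrac14|\xi|^2+9e^2\), then gives
\[
 \Re\xi^2\le-\tfrac14|\xi|^2+c\,e^2
\]
with an absolute constant \(c\). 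Expanding \(e^2\le36(d+L)^2\le72d^2+72L^2\) yields
\[
 \Re\xi^2\le-\tfrac14|\xi|^2+C_1d^2+C_2L^2 .
\]
This is the adjusted absorption step the statement refers to. To reach the stated constants (\(576\), \(4.04\cdot10^{-8}\), \(404\)) rather than the slightly larger ones this crude route gives, the Young weights must be tuned, or the bound on \(|\xi+y|\) sharpened using \(|y|\le1\) and \(|\xi|\le10^{-1562}\) on average. This constant-matching is the main obstacle. If the split with \(1/4\) is too lossy, I would instead keep the factor \(-\tfrac12\) on \(\sigma^2\)-sized terms and use the slack more economically.

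\emph{Step 3: averaging.} Take expectations. Since \(E|\xi|^2=V\), the main term is \(-\tfrac14V\). The \(d^2\) term becomes \(C_1d^2\), which should be \(576d^2\). For the \(L^2\) term, use \(0<L\le\log2\) from \eqref{eq:circle-log-distance}, so \(\E L^2\le\log2\cdot\E L\). By \eqref{eq:circle-EL-small}, \(\E L\le1.01\cdot10^{-10}V+1.01Q\). Hence \(C_2\E L^2\) contributes a multiple of roughly \(10^{-8}V\) plus a bounded multiple of \(Q\). These produce the \(4.04\cdot10^{-8}V\) and \(404Q\) terms in \eqref{eq:circle-geometric-second}, once the constants are tracked exactly.
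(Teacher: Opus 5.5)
\textbf{There is a gap in the absorption step.} Your overall plan is exactly the paper's: the cone bound $\Re y^2\le-\frac12|y|^2$ on $\mathcal A$, transfer via Lemma~\ref{lem:circle-projection}, absorption, then $L^2\le(\log2)L$ and \eqref{eq:circle-EL-small}. As written, however, it does not give the stated constants, and the fixes you suggest cannot work.

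Your intermediate bound $\Re\xi^2\le-\frac12|\xi|^2+3|\xi|e+\frac32e^2$ forces, after absorption, $\Re\xi^2\le-\frac14|\xi|^2+\frac{21}{2}e^2$. The constant $9$ in $3|\xi|e\le\frac14|\xi|^2+9e^2$ is optimal, since the discriminant of $\frac14a^2-3ab+cb^2$ vanishes at $c=9$. So no choice of Young weights improves it. With $e\le6(d+L)$ this yields $756(d^2+L^2)$, hence
\[
(-\tfrac14+5.3\cdot10^{-8})V+756d^2+530Q,
\]
which is weaker than \eqref{eq:circle-geometric-second}. An asymmetric split of $(d+L)^2$ cannot bring both the $d^2$ and the $L^2$ coefficient below about $577$, because $(1+\epsilon)(1+1/\epsilon)\ge4$.

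Appealing to $|\xi|\le10^{-1562}$ ``on average'' does not help either. $V$ controls only the second moment, and individual critical points (near the ends of $\mathcal A$, or in $T$) need not be small. Averaging first and then applying Cauchy--Schwarz to $\E|\xi|e$ reproduces the same constant $\frac{21}{2}$.

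\textbf{The fix.} The loss comes from converting the full term $-\frac12|y|^2$ into $-\frac12|\xi|^2$, which needs a lower bound on $|y|$. You only need an upper bound on $\frac14|\xi|^2$, and that costs half as much. Keep $|y|$ as the free variable: with $\xi=y+e$,
\[
\Re\xi^2\le-\tfrac12|y|^2+2|y||e|+|e|^2,\qquad \tfrac14|\xi|^2\le\tfrac14(|y|+|e|)^2 .
\]
Adding these and completing the square in $|y|$ gives
\[
\Re\xi^2+\tfrac14|\xi|^2\le-\tfrac14(|y|-5|e|)^2+\tfrac{15}{2}|e|^2\le\tfrac{15}{2}|e|^2 .
\]
Then $\frac{15}{2}\cdot36(d+L)^2=270(d+L)^2\le576(d^2+L^2)$. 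Your Step~3 now yields exactly $4.04\cdot10^{-8}V$ and $404Q$, since $576\cdot1.01\log2<404$.

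Your weaker constants would still suffice for Proposition~\ref{prop:circle-effective}, whose margins are enormous. They do not, however, prove the lemma as stated.
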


\begin{proof}
Choose \(y\in\mathcal A\) as in
Lemma~\ref{lem:circle-projection}.  If \(y=0\), then
\[
\Re y^2=0=-\frac12|y|^2.
\]
Otherwise write \(y=re^{i\phi}\).  Since \(y\in\mathcal A\),
we have \(r=2\cos\phi\), and \(0<r\le1\) implies
\[
|\phi|\in[\pi/3,\pi/2].
\]
Consequently, in all cases,
\begin{equation*}
	\Re y^2\le-\frac12|y|^2.
\end{equation*}
Write \(\xi=y+e\).  Using \(|\xi|\le |y|+|e|\), we obtain
\begin{align*}
	\Re\xi^2+\frac14|\xi|^2
	&\le-\frac12|y|^2+2|y||e|+|e|^2
	+\frac14(|y|+|e|)^2\\
	&=-\frac14\bigl(|y|-5|e|\bigr)^2
	+\frac{15}{2}|e|^2\\
	&\le\frac{15}{2}|e|^2.
\end{align*}
Since \(|e|\le6(d+L)\), it follows that
\begin{equation*}
	\Re\xi^2
	\le-\frac14|\xi|^2+270(d+L)^2
	\le-\frac14|\xi|^2+576(d^2+L^2).
\end{equation*}
Now \(L^2\le(\log2)L\).  Taking expectations and applying
\eqref{eq:circle-EL-small} gives
\eqref{eq:circle-geometric-second}.
\end{proof}

Combining the preceding quantitative estimates yields the following
effective version of \cite[Theorem~5.1]{Tao22}.

\begin{prop}
\label{prop:circle-effective}
No polynomial satisfying \eqref{eq:circle-counter} and
\eqref{eq:circle-range} exists when \(n\ge\Nstar\).
\end{prop}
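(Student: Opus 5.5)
We argue by contradiction: assume such an \(f\) exists and combine the two second-moment estimates already available, Lemma~\ref{lem:circle-Fourier} (an upper bound on \(|\Re\E\xi^2|\)) and Lemma~\ref{lem:circle-geometric} (a negative upper bound on \(\Re\E\xi^2\) coming from the geometry of \(\mathcal A\)). First, \eqref{eq:circle-second-small} gives
\[
 \Re\E\xi^2\ge-8.01\cdot10^{-10}V-8Q .
\]
Inserting this into \eqref{eq:circle-geometric-second} yields
\[
 \Bigl(\tfrac14-4.04\cdot10^{-8}-8.01\cdot10^{-10}\Bigr)V
 \le576d^2+412Q .
\]
The coefficient on the left exceeds \(0.24\), so \(V\le2400d^2+1717Q\).

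Next we eliminate \(d^2\) using Lemma~\ref{lem:circle-mean-control}. By \eqref{eq:circle-d-fine}, \(d\le2\cdot10^4\sigma^2+3Q\le2\cdot10^4V+3Q\). Hence
\[
 d^2\le 2(4\cdot10^8V^2+9Q^2).
\]
Since \(V<10^{-3124}\) by \eqref{eq:circle-small-V}, we have \(V^2\le10^{-3124}V\). Also \(Q=10^{800}(0.06)^n<1\), so \(Q^2\le Q\). Substituting, the \(d^2\) contribution is at most \(10^{-3110}V+O(Q)\), which can be absorbed into the left side. This gives
\[
 V\le C\,Q\qquad\text{with } C<10^4 .
\]

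The main obstacle is that this alone is not a contradiction, since \(V\) could be exponentially small. We therefore need a lower bound for \(V\) (or for \(d\)) in terms of \(n\) that beats \(Q\approx(0.06)^n\). The natural source is the range hypothesis \(d\ge8^{-n}\) in \eqref{eq:circle-range}. By \eqref{eq:circle-d-fine},
\[
 8^{-n}\le d\le2\cdot10^4\sigma^2+3Q\le2\cdot10^4V+3Q\le(2\cdot10^4C+3)Q .
\]
This would require \(8^{-n}\le10^{9}\cdot10^{800}(0.06)^n\), that is, \((0.48)^n\ge10^{-809}\). This fails for \(n\ge\Nstar\), since \(0.48^n\) is astronomically smaller than \(10^{-809}\). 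That is the contradiction.

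The step to check carefully is that every exponential error was funneled into \(Q\) with base \(0.06<1/8\). This is exactly why the lower endpoint \(8^{-n}\) was chosen and why Chijiwa's theorem handles \(d<8^{-n}\). We also need that the multiplicative constants (\(10^{800}\), \(C\), and so on) remain negligible against \((8\cdot0.06)^{-n}\), which is immediate for \(n\ge10^{200000}\). If the direct route through \(d\) were too lossy, the fallback would be \eqref{eq:circle-log-distance} with \(\E L>0\), but the \(d\ge8^{-n}\) route seems cleanest.
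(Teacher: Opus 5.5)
Your argument is correct and uses the same ingredients as the paper: \eqref{eq:circle-second-small}, \eqref{eq:circle-geometric-second}, \eqref{eq:circle-d-fine}, \eqref{eq:circle-small-V} and the range condition $d\ge 8^{-n}$. The only difference is organization: the paper splits on whether $\sigma^2\le 10^{1000}(0.06)^n$ (small case: $d<8^{-n}$; large case: the two second-moment bounds clash), whereas you absorb directly to get $V\lesssim Q$ and always finish with $d<8^{-n}$. One harmless slip: with $Q^2\le Q$ your constant is $C\approx 4.5\cdot10^4$ rather than $C<10^4$, but $2\cdot10^4C+3<10^9$ still holds, so the final contradiction $(0.48)^n\ge10^{-809}$ is unaffected.
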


\begin{proof}
Suppose first that
\[
 \sigma^2\le10^{1000}(0.06)^n.
\]
\eqref{eq:circle-d-fine} gives
\[
 d<3\cdot10^{1004}(0.06)^n<8^{-n},
\]
contrary to \eqref{eq:circle-range}.

It remains to consider
$
 \sigma^2>10^{1000}(0.06)^n.
$
 Then
\[
\eta:=\frac{Q}{\sigma^2}<10^{-200}.
\]
By \eqref{eq:circle-mu-fine},
\[
|\mu|
\le(2\cdot10^5+20\eta)\sigma^2
<2.01\cdot10^5\sigma^2.
\]
Since \(\sigma^2\le V\) and \eqref{eq:circle-small-V} gives
\(V<10^{-3124}\), it follows that
\[
\frac{V}{\sigma^2}
=1+\frac{|\mu|^2}{\sigma^2}
\le1+(2.01\cdot10^5)^2\sigma^2
<1.001.
\]
Likewise, \eqref{eq:circle-d-fine} gives
\[
d\le(2\cdot10^4+3\eta)\sigma^2
<2.01\cdot10^4\sigma^2.
\]
Hence, using \(V\ge\sigma^2\) and again
\eqref{eq:circle-small-V},
\[
\frac{576d^2}{V}
\le576(2.01\cdot10^4)^2\sigma^2
<10^{-1400}.
\]

The geometric estimate \eqref{eq:circle-geometric-second} therefore
implies
\[
 \Re\E\xi^2<-0.2499999V.
\]
The Fourier estimate \eqref{eq:circle-second-small}, on the other hand,
implies
\[
 \Re\E\xi^2>-8.02\cdot10^{-10}V.
\]
These inequalities are incompatible.
\end{proof}

The compactness argument near the unit circle has therefore been replaced
by finite obstacle removal and explicit estimates.  We now combine the
two endpoint theorems with the quantitative results already in the
literature.

\section{Assembly of the ranges}
\label{sec:assembly}

We first record that the explicit constants used at the two endpoints
are compatible:
\begin{equation}
 \Norg<10^{368}<\Nstar.
 \label{eq:threshold-comparison}
\end{equation}
Indeed, the base in \eqref{eq:org-threshold} is less than \(10^{23}\).
The following proof now exhausts all possible positions of the
distinguished zero.

\begin{proof}[Proof of Theorem~\ref{thm:main}]
Suppose to the contrary that \(n\ge\Nstar\) and that \(f\) is a
counterexample.  Multiplication by a scalar and rotation preserve the
problem, so \(f\) may be taken monic and its distinguished zero may be
taken to be \(a\in[0,1]\).  Assume that \(f'\) has no zero in
\(\overline{D(a,1)}\), and put \(d=1-a\).

If \(a=0\), the Gauss--Lucas theorem places every critical point in
\(\overline{\D}=\overline{D(a,1)}\), a contradiction.  If
\(0<a\le n^{-1/8}\), then \eqref{eq:threshold-comparison} and
Theorem~\ref{thm:org-effective} give the same contradiction.

It remains to suppose that \(a>n^{-1/8}\).  If
\(d\ge n^{-1/64}\), then
\[
 na^7d^4
 \ge n^{1-7/8-4/64}
 =n^{1/16}>20800.
\]
Equivalently,
\[
 n>\frac{20800}{a^7(1-a)^4},
\]
so Chalebgwa's theorem
\cite[Theorem~1.1]{Cha20} applies and rules out this case.

We are left with \(d<n^{-1/64}\).  If \(d\ge8^{-n}\),
Proposition~\ref{prop:circle-effective} rules out the counterexample.
Finally, suppose that \(0\le d<8^{-n}\).  For every \(n\ge64\),
\[
 2n^9\le2^n,
\]
because it holds at \(n=64\) and the ratio \(2^n/(2n^9)\) is
increasing thereafter.  Hence
\[
 8^{-n}\le\frac1{2n^94^n}.
\]
Thus
\[
 a>1-\frac1{2n^94^n},
\]
and Chijiwa's explicit boundary theorem
\cite[Main theorem]{Chi11} applies.  More precisely, it
supplies a zero \(\zeta\) of \(f'\) for which
\[
 |\zeta-a|\le1-c_n(1-a),
\]
where Chijiwa's explicit congruence-dependent coefficient \(c_n\) is
nonnegative for \(n\ge9\).  Hence \(|\zeta-a|\le1\).  Every possible
value of \(a\) has led to a contradiction.
\end{proof}

\begin{rem}
\label{rem:not-optimal}
The theorem identifies a valid universal threshold, not the smallest
one.  The numerical value is dominated by the decision to separate all
finite obstacles using fixed scales and by the high Fourier cutoff
\(M=10^{410}\).  None of the inequalities in the proof is close to
equality.
\end{rem}

\appendix

\section{Numerical audit}
\label{sec:numerical-audit}

For completeness, Table~\ref{tab:numerical-margins} gathers the
numerical implications of \(n\ge10^{200000}\) that were used in the
proof. Each bound in the table follows by taking base-ten logarithms;
the estimates are intentionally weakened to leave margins of several
orders of magnitude.

\begin{table}[ht]
\centering
\caption{Numerical margins in the effective proof}
\label{tab:numerical-margins}
\small
\begin{tabular}{@{}lll@{}}
\toprule
Quantity & Bound used & Consequence\\
\midrule
\(n^{-1/64}\) & \(\le10^{-3125}\) &
\(5n^{-1/64}<10^{-3124}\)\\
\(n^{1/16}\) & \(\ge10^{12500}\) &
\(n^{1/16}>20800\)\\
\(\Norg\) & \(<10^{368}\) &
\(\Norg<\Nstar\)\\
\(D(\nu)\) &
\(<10(10^{-410}+10^{-1562}+10^{-3119}+10^{-199589})\) &
\(<\kappa/(100\pi)\)\\
\(|H|\) &
\(<6\cdot10^{-21}\) &
\(<\Delta\)\\
\(\kappa/d_\ast\) & \(=5\cdot10^{-229}\) &
\(<10^{-20}\)\\
\(Q/\sigma^2\) in the second case & \(<10^{-200}\) &
\(V<1.001\sigma^2\)\\
\bottomrule
\end{tabular}
\end{table}

We spell out three checks that involve sums rather than a single
monomial comparison.  First, the exceptional-angle construction gives
\begin{align*}
 |H|
 &<2\left(
 20\cdot10^{-22}
 +40\cdot10^{32}\cdot10^{-55}
 +\frac{32\cdot10^{32}\cdot2.5\cdot10^{-172}}{10^{-30}}
 +2000(10^{32}+1)10^{-400}\right)\\
 &<6\cdot10^{-21}.
\end{align*}
Second, in Lemma~\ref{lem:circle-endpoint}, the two absolute errors
arising at the inner endpoint are each bounded by \(0.012^n\); hence
\[
 2(0.012)^n<0.03^n.
\]
Third, in the small-variance alternative of
Proposition~\ref{prop:circle-effective},
\[
 \frac{3\cdot10^{1004}(0.06)^n}{8^{-n}}
 =3\cdot10^{1004}(0.48)^n<1.
\]
These checks also show why a decimal used as an upper bound must not be
rounded downward; every displayed decimal used as an upper bound has
been rounded upward.

\end{document}